\newcommand{\Order}{\mathrm{O}}
\newcommand{\defeq}{\stackrel{\mbox{\scriptsize{\normalfont\rmfamily def.}}}{=}}
\renewcommand{\Vec}[1]{\mbox{\boldmath $#1$}}
\newcommand{\tree}{G}
\newcommand{\frakd}{{\mathfrak{d}}}
\newtheorem{theorem}{Theorem}[section]
\newtheorem{lemma}[theorem]{Lemma}
\newtheorem{corollary}[theorem]{Corollary}
\newtheorem{proposition}[theorem]{Proposition}
\newtheorem{observation}[theorem]{Observation}
\title{
An Analysis of the Recurrence/Transience 
of\\Random Walks on Growing Trees and Hypercubes %by Coupling
%A Simple Random Walk on a Tree with an Infinite Growth can be Recurrent, and 
%a Grid Walk in an Infinitely Growing Dimension can be Recurrent
%A Simple Random Walk on an Infinitely Growing Tree can be Recurrent, and 
%a Grid Walk in the Infinitely Growing Dimension can be Recurrent
%A Simple Analysis of the Recurrence/Transience of \\
%Random Walks on Growing Networks with Unbounded Degrees
}
\author{
 Shuma Kumamoto\footnote{
   Graduate School of Mathematical Science, Kyushu University
   } \and
  Shuji Kijima\footnote{
  Department of Data Science, Shiga University
   }\and
  Tomoyuki Shirai\footnote{
  Institute of Mathematics for Industry, Kyushu University
  }
}
\begin{document}
\maketitle
\begin{abstract}
 It is a celebrated fact that 
   a simple random walk on an \emph{infinite} $k$-ary tree for $k \geq 2$ 
   returns to the initial vertex at most finitely many times during infinitely many transitions; it is called \emph{transient}. 
 This work points out the fact that 
   a simple random walk on an infinitely \emph{growing} $k$-ary tree 
   can return to the initial vertex infinitely many times, it is called \emph{recurrent}, 
   depending on the growing speed of the tree. 
 Precisely, 
   this paper is concerned with 
   a simple specific model of a \emph{random walk on a growing graph} (\emph{RWoGG}), and 
   shows a phase transition between the recurrence and transience of the random walk 
     regarding the growing speed of the graph. 
 To prove the phase transition,
   we develop a coupling argument,  
    introducing the notion of \emph{less homesick as graph growing} (\emph{LHaGG}). 
 We also show some other examples, 
   including a random walk on $\{0,1\}^n$ with infinitely growing $n$, 
  of the phase transition between the recurrence and transience. 
 We remark that some graphs concerned in this paper have infinitely growing degrees.

%%%%%%%%%%
%\keywords{
\noindent
{\bf Keywords: }
Random walk, 
dynamic graph, 
recurrent, transient
%uniform bounded
%dynamic graph,
%changing graph,
%bipartite graph
%}
\end{abstract}

\section{Introduction}
 The recurrence or transience is 
    a classical and fundamental topic of random walks on {\em infinite} graphs, see e.g.,~\cite{Durrett}: 
 let $X_0,X_1,X_2,\ldots$ be a random walk (or a Markov chain)\footnote{
   This paper is concerned with discrete time and space processes. 
   We will be mainly concerned with time-{\em inhomogeneous} Markov chains, 
   but here you may assume a time-homogeneous chain, 
   i.e., the transition probability $\Pr[X_{t+1} = v \mid X_t=u]$ is independent of the time $t$, but depends on $u,v$.} 
   on an infinite state space $V$, e.g., $V=\mathbb{Z}$, 
  with $X_0 = v$ for $v \in V$. 
 For convenience, 
  let 
\begin{align*}
  R(t) = \Pr[X_t = v] \hspace{2em}(= \Pr[X_t = v \mid X_0 = v]) 
\end{align*}  
   denote the probability that a random walk returns to the initial state at time step $t$ ($t=1,2,\ldots$), 
 and then the initial point $v$ is {\em recurrent} by the random walk  if 
\begin{align}
 \sum_{t=1}^{\infty} R(t) = \infty
\label{def:recurrent0}
\end{align}
 holds, otherwise it is {\em transient}. 
 Intuitively, \eqref{def:recurrent0} means that 
   the random walk is ``expected'' to return to the initial state infinitely many times. 
 It is well known that 
   a simple random walk on $\mathbb{Z}^d$ is recurrent for $d=1,2$, 
  while it is transient for $d \geq 3$, cf.~\cite{Durrett}. 
 Another celebrated fact is that 
  a simple random walk on an infinite $k$-ary tree is transient~\cite{Lyons90,LyonsPeres}. 

%%%
 Analysis of random walks on \emph{dynamic graphs} 
     has been developed in several contexts. 
%%%%
 In probability theory, 
   random walks in random environments are a major topic, 
   where self-interacting random walks 
   including reinforced random walks and excited random walks 
   have been intensively investigated 
  as a relatively tractable non-Markovian process, see e.g., \cite{Davis90,BW03,DKL08,SZ09,SZ11,KZ13}. 
 The recurrence or transience of a random walk in a random environment 
   is a major topic there, 
   particularly random walks on growing subgraphs of $\mathbb{Z}^d$ 
   or on infinitely growing trees are the major targets \cite{DHS14C,DHS14J,Huang19,Amir20}. 
%%%%
 In distributed computing, 
   analysis of algorithms, including random walk, on dynamic graph  
   attracts increasing attention 
  due to the fact that real networks are often dynamic~\cite{Cooper11,KO11,APR16,MS18}. 
 Searching or covering networks, related to hitting or cover times of random walks, 
    are major topics there~\cite{CF03,AKL08,DR14,AKL18,LMS18,CSZ20,KSS21}.

%%%%%
 This work is concerned with the recurrence/transience of a random walk on a growing graph. 
 We show the fact that 
    a simple random walk on an infinitely growing complete $k$-ary tree 
   can be recurrent 
   \emph{depending on the growing speed} of the tree, 
 while 
   a simple random walk on an infinite $k$-ary tree is transient as we mentioned above. 
%%%%
 More precisely, 
   this paper follows the model of the random walk on growing graph (RWoGG) \cite{KSS21},  
    where the network gradually grows 
     such that the growing network keeps its shape $G(n)$ for $\mathfrak{d}(n)$ steps, and then 
   changes the shape to $G(n+1)$ by adding some vertices to $G(n)$  (see Section~\ref{sec:model} for detail). 
%%%
 Then, we show
  a phase transition between the recurrence and transience of 
   a random walk on a growing $k$-ary tree, 
     regarding the growing speed of the graph. 
%%%
 For a proof,
   we develop the notion of 
     {\em less-homesick as graph growing} (LHaGG), 
    which is a quite natural property of RWoGG, and 
    gives a simple proof by a {\em coupling} argument, 
   that is an elementary technique of random walks or Markov chains based on a comparison method.  
%%%%
 We also show some other examples of the phase transition, 
   including as a random walk on $\{0,1\}^n$ with infinitely growing $n$.

\subsection{Existing works and contribution of the paper}
 The recurrence/transience of a random walk on a dynamic graph 
    has been mainly developed in the context of random walks in random environment 
    including reinforced random walks and excited walks. 
%%%
 Here we briefly review some existing works 
   concerning the recurrence of a random walk on  $\mathbb{Z}^d$ and infinite (or infinitely growing) trees, 
  directly related to this paper. 

\subparagraph{Random walks on (asymptotically) $\mathbb{Z}^d$. }
 It is a celebrated fact that the initial point, say origin $\Vec{0}$, 
   in the infinite integer grid $\mathbb{Z}^d$ 
   is recurrent when $d=1$ and $2$ by a simple random walk, and 
   it is transient for $d \geq 3$, see e.g., \cite{Durrett}. 

 Dembo et al.~\cite{DHS14J} is concerned with 
   a random walk on an infinitely growing subgraph of $\mathbb{Z}^d$, and 
   gave a phase transition, that is roughly speaking 
    a random walk is recurrent if and only if 
    $\sum_{t=1}^{\infty} \pi_t(\Vec{0}) = \infty$ holds under a certain condition, 
   where $\pi_t$ denotes the stationary distribution of the transition matrix at time $t$. 
  Huang~\cite{Huang19} extended the argument of \cite{DHS14J} and 
   gave a similar or essentially the same phase transition for more general graphs. 
%%%%
 The proofs are based on the edge conductance and a central limit theorem, 
   on the assumptions that 
   every vertex of the dynamic graph has a degree at most \emph{constant} to time (or the size of the graph), and 
   the random walk is ``lazy'' such that it has at least a \emph{constant} probability of self-loops at every vertex.  
%%%
 Those arguments are sophisticated and enhanced 
    using the argument of evolving set and the heat kernel by recent works \cite{DHMP17,DHZ19}.

\subparagraph{Random walks on infinitely growing trees. }
 Lyons~\cite{Lyons90} studied sufficient conditions for a random walk being recurrent/transient, see also \cite{LyonsPeres}. 
 Roughly speaking, 
    the initial point, say the root $r$, is recurrent if and only if the random walk is enough {\em homesick}, 
   meaning that a random walk probabilistically tends to choose the direction to the root. 

%%%%%
 Amir et al.~\cite{Amir20} introduced a random walk in changing environment model, and 
   investigated the recurrence and transience of random walks in  the model. 
 They gave a conjecture about the conditions for the recurrence and transience regarding the limit of a graph sequence, and 
  proved it for trees. 
 Huang's work \cite{Huang19}, which we mentioned above, implies that a simple random walk starting from a vertex $v$ on growing $k$-ary tree is recurrent 
  if and only if $\sum \pi_t(v) = \infty$, that is similar to or essentially the same as a main result of this paper 
    under a certain condition.  
 We remark that a $k$-ary tree with height $n$ is not an (edge induced) subgraph of $\mathbb{Z}^d$ for a {\em constant} $d$. 

%%%%%
 There is a lot of work on the recurrence or transience of a random walk on a growing tree, 
   related to self-interacting random walks 
   including reinforced random walks and excited random walks, e.g.,~\cite{IFN19,FIORR21}. 
 They are non-Markovian processes, and in a bit different line from \cite{DHS14J,Amir20,Huang19} and this paper.

\subparagraph{Contribution of this work. }
 This paper is concerned with 
   a specific model of dynamic graphs with an increasing number of vertices, which we will describe in Section~\ref{sec:model}, and 
   gives a phase  transition by the growing speed regarding a random walk being recurrent/transient.  
 The phase transition is very similar to or essentially the same as \cite{DHS14J,Huang19}, 
   while this paper contains mainly three contributions. 
 One is the proof technique:  
   we employ a coupling argument 
   while the existing works are based on the conductance and a central limit theorem. 
 The coupling arguments is a classical and elementary comparison technique of random walks, 
   and we introduce the notion of LHaGG to use the comparison technique. 
 Since the coupling technique is relatively simple, 
   we can drop two assumptions in the existing works, 
     namely a random walk being {\em lazy} and a growing graph having {\em uniformly bounded degree}, 
     which are naturally required in the conductance argument to make the arguments simple. 
 This paper is mainly concerned with reversible random walks of {\em period 2}, 
    which contains simple random walks on undirected bipartite graphs; 
   this is the second contribution. 
 We also show an example of random walk on $\{0,1\}^n$ with increasing $n$, 
   where the (maximum) degree of the dynamic graph, that is $n$, infinitely grows; 
   this is the third contribution. 

 While the coupling technique is relatively easy, 
   it often selects the applicable target.  
 In fact, 
   the results by \cite{DHS14J,Huang19} 
  are widely applied to general setting as far as it satisfies appropriate assumptions, 
  while our result is limited to specific targets. 
 Such an argument about conductance and coupling seems  
   known as an implicit knowledge in the literature of mixing time analysis, cf.~\cite{AR01,Guruswami}. 
 However, 
   we emphasize that 
     the coupling technique 
     often gives an easy proof of an interesting phenomena, 
     as this paper shows.

\subsection{Organization}
 As a preliminary, we describe the model of random walk on growing graph (RWoGG) in Section~\ref{sec:pre}. 
 Section~\ref{sec:frame} introduces the notion of less homesickness as graph growing (LHaGG), and 
   presents some general theorems for sufficient conditions of a RWoGG being recurrent/transient. 
 Section~\ref{sec:karytree} shows 
   a phase transition between the recurrence and transience of a random walk on growing $k$-ary tree. 
 Section~\ref{sec:cube} shows a phase transition for a random walk on $\{0,1\}^n$ with increasing $n$. 
 Some other examples, such as growing region of $\mathbb{Z}^d$ and another growing tree are found in appendix.

\section{Preliminaries}\label{sec:pre}
\subsection{Model}\label{sec:model}
 A growing graph is a sequence of (static) 
  graphs $\Vec{\cal G} = {\cal G}_0, {\cal G}_1, {\cal G}_2, \ldots$
 where ${\cal G}_t=({\cal V}_t,{\cal E}_t)$ for $t=0,1,2,\ldots$ denotes 
  a graph\footnote{
    Every static graph is simple and undirected in this paper, 
     for simplicity of the arguments. } 
   with a finite vertex set ${\cal V}_t$ and an edge set ${\cal  E}_t \subseteq {{\cal V}_t \choose 2}$. 
 For simplicity, 
 this paper assumes\footnote{
  Thus, the current position does not disappear in the next step. 
  }   
  ${\cal V}_t \subseteq {\cal V}_{t+1}$ and ${\cal E}_t \subseteq {\cal E}_{t+1}$. 
 In this paper, 
   we assume $|{\cal V}_{\infty}|=\infty$, otherwise the subject (recurrence) is trivial. 
 A random walk on a growing graph is a Markovian series $X_t \in {\cal V}_t$ ($t=0,1,2,\ldots$). 

 In particular, this paper is concerned with a specific model, described as follows, cf.~\cite{KSS21}.  
 A {\em random walk on a growing graph} ({\em RWoGG}), in this paper, is 
   formally characterized by a 3-tuple of functions ${\cal D} = \left(\mathfrak{d},G,P\right)$.
 The function $\mathfrak{d}\colon \mathbb{Z}_{>0} \to \mathbb{Z}_{ \geq 0}$ denotes the duration.
 For convenience, let $T_n=\sum_{i=1}^n \frakd(i)$ for $n=1,2,\ldots$\footnote{
   We do not  exclude $T_{n-1}=T_n$; if $\frakd(n)=0$ then  $T_{n-1}=T_n$. 
  } and $T_0=0$. 
 We call the time interval
 $[T_{n-1},T_n]$ {\em phase} $n$ for $n=1,2,\ldots$; 
   thus $T_{n-1}=\sum_{i=1}^{n-1} \frakd(i)$ is the beginning of the $n$-th phase, but 
     we also say that $T_{n-1}$ is the end of the $(n-1)$-st phase, for convenience. 
%%%
 The function $G\colon \mathbb{Z}_{>0} \to \mathfrak{G}$ 
   represents the graph $G(n) = (V(n),E(n))$ for the phase $n$, 
  where $\mathfrak{G}$ denotes the set of all (static) graphs, 
    i.e., our growing graph $\Vec{\cal G}$ satisfies ${\cal G}_t = G(n)$ for $t \in [T_{n-1},T_n)$. 
%%%
 Similarly, the function 
  $P\colon \mathbb{Z}_{>0} \to \mathfrak{M}$ is a function that represents the ``transition probability'' 
   of a random walk on graph $G(n)$ 
   where $\mathfrak{M}$ denotes the set of all stochastic matrices.

 A RWoGG $X_t$ ($t=0,1,2,\ldots$) characterized by ${\cal D} = \left(\mathfrak{d},G,P\right)$ is 
   temporally a time-homogeneous finite Markov chain 
   according to $P(n)$ with the state space $V(n)$ during the time interval  $[T_{n-1},T_{n}]$; 
   precisely, a transition from $X_t$ to $X_{t+1}$ follows  $P(n)$ for any  $t \in [T_{n-1},T_{n})$. 
 We specially remark for $t=T_n$ that $X_t \in V(n) \subseteq V(n+1)$, 
  meaning that $X_t$ is a state of $V(n+1)$ but actually $X_t$ must be in $V(n)$ by the definition of the transition. 
  Suppose $X_0=v$ for $v \in V(1)$. 
We define the return probability at $v$ by 
\begin{align}
 R(t) = \Pr[X_t = v]\ \left(= \Pr[X_t = v \mid X_0=v]\right)
\end{align}
at each time $t = 0,1,2,\ldots$. 
We say $v$ is \emph{recurrent} by RWoGG ${\cal D}=(\frakd,G,P)$ if 
\begin{align}
  \sum_{t=1}^{\infty}R(t) = \infty
\label{def:recurrent}
\end{align}
  holds, 
 otherwise, i.e., $\sum_{t=0}^{\infty}R(t)$ is finite, 
  $v$ is {\em transient} by ${\cal D}$. 

\subsection{Terminology on time-homogeneous Markov chains}\label{sec:term}
 We here briefly introduce some terminology 
    for random walks on static graphs, or time-homogeneous Markov chains, according to~\cite{LevinPeres}. 
\subsubsection{Ergodic random walks}\label{sec:term1}
%%%%%%%%%%%%%
 Suppose that $X_0,X_1,X_2,\ldots$  is a random walk on a static graph $G=(V,E)$ 
   characterized by a time-homogeneous transition matrix $P = (P(u,v))\in \mathbb{R}_{\geq 0}^{V\times V}$
    where $P(u,v)=\Pr[X_{t+1}=v \mid X_t=u]$. 
%%%%%%%%%
 A random walk 
  is \emph{reversible} if 
   there exists a positive function $\mu \colon V \to \mathbb{R}_{>0}$ such that 
   $\mu(u)P(u,v)=\mu(v)P(v,u)$ hold for all $u,v\in V$. 
 A transition matrix $P$ 
   is \emph{irreducible} 
     if $\forall u,v \in V$, $\exists t>0$, $(P^t)(u,v) > 0$. 
 The period of $P$ is given by 
   ${\rm period}(P) = \min_{v \in V} {\rm gcd}\{t>0 : (P^t)(v,v)>0\} $. 
 It is well known that ${\rm gcd}\{t>0 : (P^t)(v,v)>0\}$ is common for any $v \in V$ if $P$ is irreducible. 

 If  ${\rm period}(P)=1$ then $P$ is said to be {\em aperiodic}. 
 A transition matrix $P$ is {\em ergodic} if it is irreducible and aperiodic. 
 We say a random walk is ($\gamma$-){\em lazy} 
   if $P(v,v) \geq \gamma$ holds for any $v \in V$ for a constant $\gamma$ ($0 < \gamma < 1$). 
 A lazy random walk is clearly aperiodic. 
 A probability distribution $\pi$ over $V$ is a \emph{stationary distribution} 
  if it satisfies $\pi P = \pi$.  
 It is well known that an ergodic $P$ has a unique stationary distribution~\cite{LevinPeres}. 
%%%%%%%%
 The {\em mixing time} of $P$ is given by
\begin{align} 
  \tau(\epsilon) \defeq \min\left\{t \ \middle|\ t \in \mathbb{Z}_{>0},\ \frac{1}{2}\max_{u \in V}\sum_{v\in V}\left|P^t(u,v)-\pi(v)\right|\leq \epsilon\right\}
\end{align}  
for $\epsilon \in(0,1)$.

\subsubsection{Random walk with period 2}\label{sec:term2}
  A {\em simple} random walk (or ``busy'' simple random walk) 
    on an undirected graph $G=(V,E)$ is given by $P(u,v) = 1/\deg(u)$ for $\{u,v\} \in E$
    where $\deg(u)$ denotes the degree of $u \in V$ on $G$. 
 This paper is mainly concerned with bipartite graphs, 
    such as trees, integer grids, and $0$-$1$ hypercubes, and then 
  the most targeted random walks are irreducible and reversible, but {\em not aperiodic}. 
\begin{observation}
  If $P$ is reversible then its period is at most 2. 
\end{observation}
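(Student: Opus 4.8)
The plan is to exploit the symmetry built into reversibility. By definition there is a positive function $\mu\colon V \to \mathbb{R}_{>0}$ with $\mu(u)P(u,v) = \mu(v)P(v,u)$ for all $u,v \in V$; since $\mu$ is strictly positive, this identity immediately shows that $P(u,v) > 0$ holds if and only if $P(v,u) > 0$. In other words, the support of $P$ is ``undirected'': every positive-probability transition can be made in both directions.

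First I would fix an arbitrary vertex $v \in V$. Because $P$ is stochastic, the $v$-th row sums to $1$, so there exists $w \in V$ with $P(v,w) > 0$. If $w = v$, then $P(v,v) > 0$, hence $1 \in \{t > 0 : (P^t)(v,v) > 0\}$. If $w \neq v$, then reversibility gives $P(w,v) = \mu(v)P(v,w)/\mu(w) > 0$, so that $(P^2)(v,v) \ge P(v,w)P(w,v) > 0$ and thus $2 \in \{t > 0 : (P^t)(v,v) > 0\}$. In particular this set is nonempty for every $v$, and $\gcd\{t > 0 : (P^t)(v,v) > 0\}$ is therefore either $1$ or a divisor of $2$; in both cases it is at most $2$.

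Since the bound holds for every vertex, it holds for the minimizing vertex, and hence $\mathrm{period}(P) = \min_{v \in V}\gcd\{t>0 : (P^t)(v,v) > 0\} \le 2$, as claimed.

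I do not expect any real obstacle: the statement is essentially a one-line consequence of reversibility. The only points deserving a little care are the degenerate self-loop case $P(v,v) > 0$ (in which the period is simply $1$) and the remark that reversibility is precisely what guarantees $\{t > 0 : (P^t)(v,v) > 0\}$ is nonempty, so that the period is well defined in the first place.
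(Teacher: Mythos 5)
Your argument is correct and is exactly the standard one: reversibility with a strictly positive $\mu$ forces $P(v,w)>0 \Leftrightarrow P(w,v)>0$, so stochasticity yields either $1$ or $2$ in $\{t>0 : (P^t)(v,v)>0\}$, making the gcd at most $2$ at every vertex. The paper states this observation without proof, and your write-up supplies precisely the intended justification.
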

Suppose $P$ is irreducible and reversible, and it has period 2. 
Then,  the underlying graph is a connected bipartite $(U,\overline{U};E)$, 
  where 
   $U = \{ u \in V \mid \exists t',\ P^{2t'}(v,u) \neq 0 \}$ for any $v \in U$, 
   $\overline{U} = \{ u \mid \forall t,\ P^{2t}(v,u) = 0\}$, i.e.,  $\overline{U} = V \setminus \overline{U}$, and  
   $E=\{ \{u,v\} \in V^2 \mid P(u,v) > 0\}$. 
 Notice that $E$ does not contain any self-loop, otherwise, $P$ is aperiodic. 

  Here, we introduce some unfamiliar terminology for periodic Markov chains. 
We say $\mathring{x} \in \mathbb{R}^{V}_{\geq 0}$ is 
 {\em even-time distribution} if it satisfies 
   $\sum_{v \in V} \mathring{x}(v) = 1$ and 
   $\mathring{x}(u)=0$ for any $u \in \overline{U}$. 
We say $\mathring{\pi} \in \mathbb{R}^V_{\geq0}$ is {\em even-time stationary distribution} 
 if it is an even-time distribution and satisfies
  $\mathring{\pi} P^2 = \mathring{\pi}$. 
\begin{proposition}[limit distribution]
Suppose $P$ is irreducible and reversible, and it has period 2. 
Then, $P$ has a unique even-time stationary distribution $\mathring{\pi}$, and 
 $\lim_{t \to \infty} \mathring{x} P^{2t} = \mathring{\pi}$ for any even-time distribution $\mathring{x}$.  
\end{proposition}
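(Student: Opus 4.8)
The plan is to reduce the period-$2$ chain on $V$ to an ordinary ergodic chain on the single class $U$, and then invoke the classical convergence theorem for finite ergodic Markov chains (see~\cite{LevinPeres}). Let $Q$ denote the $U\times U$ matrix obtained by restricting $P^2$, i.e.\ $Q(u,u')=P^2(u,u')$ for $u,u'\in U$. The whole statement will follow once we check that $Q$ is a well-defined \emph{ergodic} transition matrix on $U$: then $Q$ has a unique stationary distribution $\mathring{\pi}_U$ on $U$ and $\nu Q^t\to\mathring{\pi}_U$ for every probability vector $\nu$ on $U$. Extending $\mathring{\pi}_U$ by $0$ on $\overline{U}$ gives an even-time distribution $\mathring{\pi}$ with $\mathring{\pi}P^2=\mathring{\pi}$; for any even-time distribution $\mathring{x}$ (which is supported on $U$ by definition) an easy induction gives $\mathring{x}P^{2t}=(\mathring{x}|_U)Q^t\to\mathring{\pi}$; and any even-time stationary distribution of $P$ restricts to a $Q$-stationary distribution on $U$, so by uniqueness for $Q$ it must equal $\mathring{\pi}$.

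So it remains to prove the three claims about $Q$. First, $Q$ is stochastic on $U$: since $P$ is irreducible with period $2$, the underlying graph is the connected bipartite graph $(U,\overline{U};E)$ recalled just before the proposition, so one $P$-step sends $U$ into $\overline{U}$ and $\overline{U}$ into $U$; hence from $u\in U$ two $P$-steps return to $U$ with probability $1$, whence $\sum_{u'\in U}Q(u,u')=\sum_{u'\in V}P^2(u,u')=1$. Second, $Q$ is irreducible: for $u,u'\in U$, irreducibility of $P$ yields $t>0$ with $P^t(u,u')>0$, and since any walk between two vertices of $U$ has even length, $t=2s$ and $Q^s(u,u')\ge P^{2s}(u,u')>0$. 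Third, $Q$ is aperiodic: fix $u\in U$ and put $A=\{t>0:P^{2t}(u,u)>0\}=\{t>0:Q^t(u,u)>0\}$; because $u\in U$ every $s$ with $P^s(u,u)>0$ is even, so $\{s>0:P^s(u,u)>0\}=2A$ and thus $2={\rm period}(P)={\rm gcd}(2A)=2\,{\rm gcd}(A)$, giving ${\rm gcd}(A)=1$.

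With the three claims in hand $Q$ is ergodic and the argument closes as sketched above. I expect the only genuinely delicate point to be the aperiodicity step — the passage from ``$P$ has period exactly $2$'' to ``the set of halved loop-lengths at a vertex of $U$ has gcd $1$'' — together with the minor bookkeeping that the class $U$ (defined relative to a base vertex $v\in U$ before the proposition) does not depend on the choice of $v$, so that $Q$ is well defined on $U$. Note that reversibility of $P$ is not actually needed for this proposition, but it lets one exhibit $\mathring{\pi}$ explicitly: if $\mu$ witnesses reversibility then $\mu$ is $P$-stationary, the period-$2$ structure forces $\mu(U)=\mu(\overline{U})$, and one checks $(\mu|_U)P^2=\mu|_U$, so $\mathring{\pi}(u)=\mu(u)/\mu(U)$ for $u\in U$.
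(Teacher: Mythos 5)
Your proof is correct, and it takes exactly the route the paper intends: the paper states this proposition without proof but immediately remarks that the even mixing-time of $P$ equals twice the mixing time of the submatrix $P^2[U]$, which is precisely your reduction of the period-$2$ chain to the ergodic chain $Q=P^2[U]$ on $U$ followed by the classical convergence theorem. Your verification that $Q$ is stochastic, irreducible, and aperiodic (via $\mathrm{period}(P)=\gcd(2A)=2\gcd(A)$), and your observation that reversibility is only needed to exhibit $\mathring{\pi}$ explicitly, are all sound.
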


We define the {\em even mixing-time} of $P$ by
\begin{align}
\mathring{\tau}(\epsilon) = \min\left\{ 2t'  \ \middle|\ t' \in \mathbb{Z}_{>0},\ \frac{1}{2} \max_{u \in U}  \sum_{v\in U} \left| P^{2t'}(u,v) - \mathring{\pi}(v)\right| \leq \epsilon \right\}
\end{align}
 for $\epsilon \in (0,1)$. 
 We remark that the even mixing-time of $P$ is equal to the twice of the mixing time of $P^2[U]$, 
  where $P^2[U]$ denotes the submatrix of $P$ induced by $U$.   
 Thus, we can use some standard arguments, e.g., coupling technique, about the even mixing-time of $P$. 
 Finally, we remark on a proposition, that plays a key role in our analysis.

\begin{proposition}[Proposition 10.25 in \cite{LevinPeres}]\label{prop:reversible-monotone}
If $P$ is reversible then $ \mathring{\pi}(v) \leq P^{2t+2}(v,v) \leq P^{2t}(v,v)$ for any $t = 0,1,2,\ldots$. 
\end{proposition}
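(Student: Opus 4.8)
The plan is to carry out the whole argument inside the Hilbert space $L^2(\pi)$, where $\pi$ is the unique, strictly positive stationary distribution of $P$ (it exists because $P$ is irreducible, even though $P$ is periodic), equipped with the inner product $\langle \phi,\psi\rangle_\pi = \sum_{u}\pi(u)\phi(u)\psi(u)$. Reversibility of $P$ is precisely the statement that $P$ is self-adjoint on this space, and $\pi P = \pi$ makes $P$ a contraction there, since $\|P\phi\|_\pi^2 \le \|\phi\|_\pi^2$ by Jensen's inequality, using that each row of $P$ is a probability vector and that $\pi P = \pi$. Before using this I would record the elementary relation between $\pi$ and $\mathring\pi$: writing $\nu = \mathring\pi P$, one checks that $\nu$ is supported on $\overline U$ (no self-loops), that $\nu P = \mathring\pi P^2 = \mathring\pi$, and hence that the probability distribution $\tfrac12\mathring\pi + \tfrac12\nu$ is fixed by $P$; by uniqueness it equals $\pi$. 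In particular $\pi(U) = \pi(\overline U) = \tfrac12$ and $\mathring\pi(v) = 2\pi(v)$ for every $v \in U$. Since the lower bound is vacuous when $v \in \overline U$ (there $\mathring\pi(v)=0$) and the upper-bound argument below never uses which side $v$ lies on, it suffices to treat $v \in U$.

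Next I would introduce the functions $f_s\colon V \to \mathbb{R}$, $f_s(u) = P^s(v,u)/\pi(u)$, for $s = 0,1,2,\ldots$. Two identities, both immediate from reversibility of $P$ (and hence of every $P^s$), drive the proof: $f_{s+1} = P f_s$, and $\langle f_s, f_{s'}\rangle_\pi = P^{s+s'}(v,v)/\pi(v)$, so in particular $\|f_s\|_\pi^2 = P^{2s}(v,v)/\pi(v)$. The upper bound is then immediate from the contraction property: for every $t \ge 0$,
\[
\frac{P^{2t+2}(v,v)}{\pi(v)} = \|f_{t+1}\|_\pi^2 = \|P f_t\|_\pi^2 \le \|f_t\|_\pi^2 = \frac{P^{2t}(v,v)}{\pi(v)},
\]
and the remaining endpoint $P^2(v,v) \le P^0(v,v) = 1$ is just the case $t=0$.

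For the lower bound I would use the bipartite structure: since $v \in U$ and the graph has no self-loops, $f_s$ is supported on $U$ when $s$ is even and on $\overline U$ when $s$ is odd; call this set $W_s$, so that $\pi(W_s) = \tfrac12$. Because $\langle f_s, \mathbf{1}\rangle_\pi = \sum_u P^s(v,u) = 1$ and $f_s = f_s \cdot \mathbf{1}_{W_s}$, Cauchy--Schwarz gives $1 = \langle f_s, \mathbf{1}_{W_s}\rangle_\pi \le \|f_s\|_\pi\,\|\mathbf{1}_{W_s}\|_\pi = \|f_s\|_\pi / \sqrt{2}$, hence $\|f_s\|_\pi^2 \ge 2$ and therefore $P^{2s}(v,v) \ge 2\pi(v) = \mathring\pi(v)$. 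Combining this with the monotonicity above proves the proposition.

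The one point that needs care is conceptual rather than computational: the contraction step compares $P^{2t+2}$ with $P^{2t}$, i.e.\ consecutive \emph{even} powers, and monotonicity can genuinely fail between consecutive powers of $P$ itself (think of an eigenvalue close to $-1$). What makes the argument work is that $P^2$ restricted to $U$ is positive semidefinite --- its eigenvalues are the squares of the real eigenvalues of the self-adjoint operator $P$ --- and the $L^2$-contraction argument encodes this without naming it. If one prefers to make it explicit, an equivalent route is to restrict to $Q = P^2[U]$ and use the spectral expansion $P^{2t}(v,v)/\mathring\pi(v) = 1 + \sum_{i \ge 2} \theta_i^{\,t} f_i(v)^2$ in a $\mathring\pi$-orthonormal eigenbasis with eigenvalues $\theta_i \in [0,1]$, observing that each term is nonnegative and nonincreasing in $t$; this yields both inequalities simultaneously.
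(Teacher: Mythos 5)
Your proof is correct. Note first that the paper itself supplies no proof of this proposition: it is imported verbatim from Levin--Peres (their Proposition 10.25), so there is no in-paper argument to compare against. Your upper-bound argument is exactly the textbook one: with $f_s(u)=P^s(v,u)/\pi(u)$ one has $f_{s+1}=Pf_s$ and $\langle f_s,f_{s'}\rangle_\pi=P^{s+s'}(v,v)/\pi(v)$ by reversibility, and the $L^2(\pi)$-contractivity of $P$ gives $P^{2t+2}(v,v)\le P^{2t}(v,v)$. Where you add genuine value is the lower bound, which Levin--Peres's monotonicity statement does not itself contain and which would otherwise be deduced by combining monotonicity with the convergence $P^{2t}(v,v)\to\mathring{\pi}(v)$ (the paper's Proposition 2.2): your direct route via $1=\langle f_{2t},\mathbf{1}_U\rangle_\pi\le\|f_{2t}\|_\pi\sqrt{\pi(U)}$ together with the correctly verified identities $\pi=\tfrac12(\mathring{\pi}+\mathring{\pi}P)$, $\pi(U)=\tfrac12$, $\mathring{\pi}(v)=2\pi(v)$ on $U$, avoids invoking convergence altogether and is self-contained. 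The only caveat worth recording is that the proposition's hypothesis ``$P$ reversible'' must be read in the context of Section 2.2.2 (irreducible, period $2$): your proof uses irreducibility for the existence, uniqueness and strict positivity of $\pi$, and period $2$ for the bipartite support argument, both of which are part of the standing assumptions there rather than of the literal statement.
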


\section{Analytical Framework: LHaGG}\label{sec:frame}
 This section introduces the notion of less-homesickness as graph growing (LHaGG), and  
   presents general theorems (Lemmas~\ref{lem:rec} and \ref{lem:trans}) 
     describing some sufficient conditions of a RWoGG being recurrent or transient. 
 See the following sections for specific RWoGGs, 
   namely, 
    RW on growing $k$-ary tree in Section~\ref{sec:karytree}, 
    RW on $\{0,1\}^n$ hypercube skeleton with increasing $n$ in Section~\ref{sec:cube}, etc. 

\subsection{Less-homesick as graph growing}\label{sec:lhagg}
 Let ${\cal D} = (f,G,P)$ and  ${\cal D}' = (f',G',P')$ be RWoGG, and let 
  $R(t)$ and $R'(t)$ respectively denote their return probabilities to respective initial vertices at time $t = 1,2,\ldots$. 
 We say ${\cal D}$ is {\em less-homesick} than ${\cal D}' = (f',G',P')$ at time $t$ if 
 $R(t) \leq R'(t)$
 holds. 

%%%
 In particular, 
  this paper is mainly concerned with the less-homesick relationship between 
  ${\cal D} = (f,G,P)$ and ${\cal D}' = (g,G,P)$
  with the same $P$, $G$ and the initial vertex $v$. 
 We say 
  ${\cal D}$ is {\em less-homesick as graph growing} ({\em LHaGG})\footnote{
   Strictly speaking, 
    LHaGG should be a property of the sequence of transition matrices  
    $P(1),P(2),P(3),\ldots$. 
   For the convenience of the notation, we say ${\cal D} = (f,G,P)$ is LHaGG, in this paper. 
  } 
  if ${\cal D} = (f,G,P)$ is less-homesick than for any ${\cal D}' = (g,G,P)$ 
 satisfying that 
\begin{align}
   \sum_{i=1}^n f(i) \leq \sum_{i=1}^n g(i)
\label{eq:grow-faster}
\end{align}
   for any $n\in \mathbb{Z}_{>0}$. 
 The condition \eqref{eq:grow-faster} intuitively implies 
  that the graph in  ${\cal D}$ grows faster than ${\cal D}'$. 
 For instance, we will prove that the simple random walk on growing $k$-regular tree is  LHaGG, in Section~\ref{sec:karytree}.

\begin{lemma}
\label{lemma:comp-static}
 Suppose RWoGG ${\cal D} = (f,G,P)$ is LHaGG. 
 Let $X_t$ ($t=0,1,2,\ldots$) be a RWoGG according to ${\cal D}$ with $X_0 = v \in V(1)$.
 Let $Y_t$  ($t=0,1,2,\ldots$) be a random walk on (a static graph) $G(n)$ according to $P(n)$
 with $Y_0=v$, where 
  $G$, $P$ and $v$ are common with  ${\cal D}$. 
 Then, $Y_t$ is less-homesick than $X_t$ at any time $t \in [T_n, T_{n+1}]$, i.e., 
$ R(t) \geq R'(t)$
holds for $t \in [T_n, T_{n+1}]$,  
where $R(t)=\Pr[X_t = v]$ and   $R'(t)=\Pr[Y_t = v]$. 
\end{lemma}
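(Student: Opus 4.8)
The plan is to derive the bound from LHaGG together with the monotonicity Proposition~\ref{prop:reversible-monotone}, the main device being to compare $\mathcal{D}$ with an auxiliary RWoGG that has ``already'' arrived at the larger graph, after first \emph{freezing} $\mathcal{D}$ at the time in question so that only its past matters.

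Fix $t\in[T_n,T_{n+1}]$. Let $\mathcal{D}^{\flat}$ be the RWoGG obtained from $\mathcal{D}$ by keeping all the structure up to time $t$ and then never growing again (durations beyond the current phase set to $0$, the last one taken as large as we like); then $R_{\mathcal{D}^{\flat}}(t)=R(t)$, and $\mathcal{D}^{\flat}$ lives forever inside subgraphs of the graph $\mathcal{D}$ occupies at time $t$. Let $\widehat{\mathcal{D}}$ be the RWoGG whose first durations all vanish, so that its walk is simply the static walk on that same graph started at $v$; then every graph visited by $\mathcal{D}^{\flat}$ is contained in the graph visited by $\widehat{\mathcal{D}}$, so the partial-sum condition~\eqref{eq:grow-faster} holds with $\widehat{\mathcal{D}}$ as the faster walk, and LHaGG gives $R_{\widehat{\mathcal{D}}}(t)\le R_{\mathcal{D}^{\flat}}(t)=R(t)$. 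Carried out for $t$ ranging over $[T_{n-1},T_n]$, this already proves $R(s)\ge P(n)^s(v,v)$ for $s\in[T_{n-1},T_n]$, and in particular $R(T_n)\ge P(n)^{T_n}(v,v)$; carried out over $[T_n,T_{n+1}]$ it gives $R(t)\ge P(n+1)^t(v,v)$ there.

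What remains — and what I expect to be the crux — is to improve the last bound from $G(n+1)$ to $G(n)$, i.e.\ to show $R(t)\ge P(n)^t(v,v)$ throughout phase $n+1$. Write $\eta$ for the law of $X_{T_n}$, so $R(T_n+s)=(\eta\,P(n+1)^s)(v)$, whereas $P(n)^{T_n+s}(v,v)=(\delta_v P(n)^{T_n}P(n)^s)(v)$; the previous paragraph already yields $\eta(v)\ge P(n)^{T_n}(v,v)$, and Proposition~\ref{prop:reversible-monotone} makes $P(n)^{2t}(v,v)$ non-increasing (so $P(n)^t(v,v)\le P(n)^{T_n}(v,v)$ for even $t\ge T_n$, the odd case being vacuous). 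The task is then to turn the pointwise inequality $\eta(v)\ge P(n)^{T_n}(v,v)$ into a monotone coupling of the walk started from $\eta$ on $G(n+1)$ and the walk started from $\delta_v P(n)^{T_n}$ on $G(n)$ which keeps the former at least as likely to be at $v$, the only slippage being mass the $\mathcal{D}$-walk has diverted into the new vertices $V(n+1)\setminus V(n)$. The difficulty is that $Y$ lives on the \emph{smaller} graph, which in isolation makes $Y$ more homesick and pushes the inequality the wrong way: one must argue that the concentration near $v$ that $X$ built up while confined to the nested subgraphs $G(1)\subseteq\cdots\subseteq G(n)$ survives its promotion to $G(n+1)$. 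Since the naive principle ``adding vertices can only lower the return probability'' is false — a pendant edge at $v$ increases $P^2(v,v)$ — this last step is precisely where the LHaGG hypothesis and the reversible period-$2$ structure must be used, rather than any generic graph monotonicity.
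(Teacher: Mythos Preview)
The paper's proof is a single application of the LHaGG definition: set
\[
g(i)=\begin{cases}0 & (i<n),\\ \sum_{j=1}^n f(j) & (i=n),\\ f(i) & (i>n),\end{cases}
\]
observe that the RWoGG $(g,G,P)$ coincides with the static $P(n)$-walk for all $t\le T_n$, that $\sum_{i\le m} g(i) \le \sum_{i\le m} f(i)$ for every $m$, and conclude $R(t)\ge R'(t)$ directly from LHaGG (read, per the footnote, as a property of the sequence $P(1),P(2),\ldots$). No freezing, no Proposition~\ref{prop:reversible-monotone}, no auxiliary coupling.

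You were led astray by the interval printed in the statement. It reads $t\in[T_n,T_{n+1}]$, but the paper's own proof and the lemma's only application (inside Lemma~\ref{lem:rec}) compare against the static walk on $G(n)$ during \emph{phase~$n$}, i.e.\ $t\in[T_{n-1},T_n]$; the $g$ above covers exactly that range. In fact the claim as literally written is false: take $G(1)$ the single edge $\{0,1\}$, $G(2)$ the path $0$--$1$--$2$, $v=0$, $f(1)=2$; then $R'(4)=1$ for the static walk on $G(1)$ while $R(4)=\tfrac12$, yet $4\in[T_1,T_2]$ and this system is LHaGG by Lemma~\ref{lem:lhagg-box}. You actually obtained the correct version yourself --- your line ``Carried out for $t$ ranging over $[T_{n-1},T_n]$, this already proves $R(s)\ge P(n)^s(v,v)$'' \emph{is} the lemma --- and everything after that paragraph is an attempt to rescue a mis-indexed stronger claim, terminating in a coupling assertion you do not (and, by the counterexample, cannot) establish. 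Your freezing device $\mathcal{D}^\flat$ is a valid workaround for the fact that the comparison walk ``stay on $G(n)$ forever'' violates~\eqref{eq:grow-faster} beyond phase~$n$, but the paper's simpler fix is just to let $g$ agree with $f$ from phase $n{+}1$ onward.
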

\begin{proof}
Let 
\begin{align*}
g(i) = \begin{cases}
0 & (i < n), \\
\sum_{j=1}^n f(j) & (i = n), \\
f(i) & (i > n).
\end{cases}
\end{align*}
 Then, the static random walk $Y_t$ on $G(n)$ also follows ${\cal D}' = (g,G,P)$ for $t \leq T_{n+1}$. 
 Clearly, $\sum_{i=1}^n f(i) \geq \sum_{i=1}^n g(i)$ for any $n$. 
 Since ${\cal D}$ is LHaGG by the hypothesis, 
$ R(t) \geq R'(t)$. 
\end{proof}

We remark that if all $P_n$ takes period 2 then $R(t) = R'(t) = 0$ for any odd $t$.

\subsection{Recurrent}\label{sec:rec}
We prove the following lemma, presenting a sufficient condition for a RWoGG to be recurrent. 
\begin{lemma}\label{lem:rec}
Suppose that 
 RWoGG ${\cal D} = (\mathfrak{d},G,P)$ is LHaGG, and 
 that every $P(n)=P_n$ ($n = 1, 2, \ldots$) is irreducible, reversible and ${\rm period}(P_n)=2$. 
 Let $p(n) = \mathring{\pi}_n(v)$ where $\mathring{\pi}_n$ denote the even-time stationary distribution of $P_n$. 
If $\mathfrak{d}$ satisfies 
\begin{align}
\sum_{n=1}^{\infty}(\mathfrak{d}(n) -1) p(n) = \infty  \label{eq:rec}
\end{align}
then $v$ is recurrent by ${\cal D}$.
\end{lemma}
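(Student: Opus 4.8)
The plan is to bound $R(t)$ from below using the machinery already set up. By Lemma~\ref{lemma:comp-static}, for each phase $n$ and each $t \in [T_{n-1}, T_n]$ we have $R(t) \geq R'_n(t)$, where $R'_n(t) = P_n^{t-T_{n-1}}(v,v)$ is the return probability of the \emph{static} random walk on $G(n)$ started at $v$ and run for $t - T_{n-1}$ steps. Since each $P_n$ is irreducible, reversible, and of period $2$, Proposition~\ref{prop:reversible-monotone} applies to $P_n$: the even-step return probabilities $P_n^{2t'}(v,v)$ are non-increasing in $t'$ and bounded below by $\mathring{\pi}_n(v) = p(n)$. Hence for every \emph{even} offset $s$ with $0 \le s \le \frakd(n)$ we get $R(T_{n-1}+s) \geq P_n^{s}(v,v) \geq p(n)$, while for odd offsets both sides vanish (period $2$). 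Caveat on parity: $T_{n-1}$ itself may be odd relative to $v$'s color class if earlier phases had odd-length durations; but since the walk never leaves $U$ on even times within a phase and $G(n) \supseteq G(n-1)$ with a consistent bipartition, one argues $v$ is always reachable at the ``even'' times of its own class, and the count of good times in phase $n$ is at least $\lfloor (\frakd(n)-1)/2 \rfloor \ge (\frakd(n)-1)/2$ — or, more cleanly, one simply restricts to even $t$ throughout and notes at least $\lceil \frakd(n)/2 \rceil \geq (\frakd(n)-1)/2$ of the steps in $[T_{n-1},T_n)$ land on return-eligible even times.

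Next I would sum over all phases. Using $R(t) \geq p(n)$ on at least $\tfrac{1}{2}(\frakd(n)-1)$ time steps inside phase $n$ (discarding the at most one ``bad'' parity endpoint per phase), we obtain
\begin{align*}
\sum_{t=1}^{\infty} R(t) \;\geq\; \sum_{n=1}^{\infty} \frac{\frakd(n)-1}{2}\, p(n) \;=\; \frac{1}{2}\sum_{n=1}^{\infty} (\frakd(n)-1)\, p(n) \;=\; \infty,
\end{align*}
where the last equality is exactly hypothesis~\eqref{eq:rec}. Therefore $v$ is recurrent by ${\cal D}$, which is the claim.

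The step I expect to require the most care is the parity bookkeeping: making precise, for period-$2$ chains on a growing bipartite graph, that the static comparison walk $Y_t$ of Lemma~\ref{lemma:comp-static} returns to $v$ exactly on times of a fixed parity, and that this parity is consistent across phases so that the per-phase count $\lfloor (\frakd(n)-1)/2 \rfloor$ of good return times is legitimate. One must also check that $v \in V(n)$ for all $n$ (immediate from $v \in V(1)$ and the nesting ${\cal V}_t \subseteq {\cal V}_{t+1}$), that $P_n^0(v,v) = 1 \ge p(n)$ handles the phase-start term, and that the $-1$ in $\frakd(n)-1$ in~\eqref{eq:rec} is precisely what absorbs the single discarded endpoint per phase. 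Everything else — the monotonicity bound, the LHaGG comparison, the final summation — is a direct invocation of results already in hand.
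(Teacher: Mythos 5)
Your argument is essentially the paper's: compare $R(t)$ with the static walk on $G(n)$ during phase $n$ via Lemma~\ref{lemma:comp-static}, lower-bound the even-time return probabilities by $p(n)=\mathring{\pi}_n(v)$ via Proposition~\ref{prop:reversible-monotone}, count roughly $\mathfrak{d}(n)/2$ good times per phase, and sum. The only structural difference is cosmetic: the paper first invokes LHaGG against the rounded durations $f(n)=2\lfloor\mathfrak{d}(n)/2\rfloor$ so that every phase boundary is even, whereas you count the even absolute times inside each phase directly; both yield the factor $(\mathfrak{d}(n)-1)/2$. One correction is needed, though: your first displayed comparison, $R(t)\geq P_n^{\,t-T_{n-1}}(v,v)$, is not what Lemma~\ref{lemma:comp-static} asserts and is in fact false in general (at $t=T_{n-1}$ it would force $R(T_{n-1})=1$). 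The lemma compares $X_t$ with a static walk on $G(n)$ started at $v$ at time $0$, giving $R(t)\geq P_n^{\,t}(v,v)$ with the \emph{absolute} time in the exponent. Since $P_n^{\,t}(v,v)\geq p(n)$ for every even $t$ by Proposition~\ref{prop:reversible-monotone}, the bound $R(t)\geq p(n)$ at the return-eligible (even) times of phase $n$ still holds, which is exactly the ``restrict to even $t$ throughout'' fallback you describe; with that reading your proof is correct.
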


\begin{proof}
 Let $f(n) = 2\lfloor \frac{\mathfrak{d}(n)}{2} \rfloor$, 
 i.e., $f(n)=\mathfrak{d}(n)$ if $\mathfrak{d}(n)$ is even, 
 otherwise $f(n)=\mathfrak{d}(n)-1$. 
 For convenience, let 
 $T'_n = \sum_{k=1}^n f(k)$
 for $n=1,2,\ldots$, and let $T'_0=0$. 
%%% 
 Let $X_t$ (resp. $X'_t$) for $t=0,1,2,\ldots$ be a RWoGG according to ${\cal D}=(\mathfrak{d},G,P)$ (resp. ${\cal D}'=(f,G,P)$), and 
  let $R(t)$  (resp. $R'$) denote the return probability of $X_t$ (resp.\ $X'_t$). 
 The hypothesis LHaGG implies $R(t) \geq R'(t)$. 
 Let $Y^n_t$ ($t=0,1,\ldots,T'_n$) be a time-homogeneous random walk according to $P(n)$, and 
 let $R''_n(t)$ ($t=1,\ldots,T'_n$) denote the return probability of $Y^n_t$. 
 The hypothesis LHaGG and Lemma~\ref{lemma:comp-static} implies 
\begin{align}
R'(t) \geq R''_n(t)
\label{eq:20231229a}
\end{align}
for $t  \in (T'_{n-1},T'_n]$. 
Then, we can see  
\begin{align}
\sum_{t=1}^{\infty} R(t) 
 &\geq \sum_{t=1}^{\infty} R'(t) && (\mbox{by LHaGG}) \nonumber \\
 &= \sum_{n=1}^{\infty}\sum_{t=T'_{n-1}+1}^{T'_n} R' (t) \nonumber \\
 &\geq \sum_{n=1}^{\infty}\sum_{t=T'_{n-1}+1}^{T'_n}R''_n(t) && (\mbox{by \eqref{eq:20231229a}}) \nonumber \\
 & = \sum_{n=1}^{\infty}\sum_{i=1}^{f(n)} R''_n(T'_{n-1} + i) &&(\mbox{recall $T'_n=T'_{n-1} +f(n)$}) \nonumber\\
 &= \sum_{n=1}^{\infty}\sum_{i'=1}^{\frac{f(n)}{2}} R''_n(T'_{n-1} + 2i') &&(\mbox{notice that $R''_n(T'_{n-1} + 2i'-1)=0$}) \nonumber \\
 &\geq \sum_{n=1}^{\infty}\sum_{i'=1}^{\frac{f(n)}{2}} p(n) && (\mbox{by Proposition~\ref{prop:reversible-monotone}})\nonumber \\
 &=\frac{1}{2}\sum_{n=1}^{\infty} f(n) p(n) \nonumber \\
 &\geq \frac{1}{2}\sum_{n=1}^{\infty} (\mathfrak{d}(n) -1) p(n) \label{eq:20240117a} 
\end{align}
hold. 
If \eqref{eq:rec} holds then \eqref{eq:20240117a} is $\infty$,
meaning that $v$ is recurrent by $\mathcal{D}$.
\end{proof}

The following lemma for aperiodic random walk is proved similarly. 
\begin{lemma}\label{lem:rec-ergodic}
Suppose that 
 RWoGG ${\cal D} = (\mathfrak{d},G,P)$ is LHaGG, and 
 that every $P(n)=P_n$ ($n = 1, 2, \ldots$) is irreducible, reversible and aperiodic, i.e., ergodic.  
 Let $p(n) = \pi_n(v)$ where $\pi_n$ denote the stationary distribution of $P_n$. 
 If $\mathfrak{d}$ satisfies 
\begin{align}
\sum_{n=1}^{\infty} \mathfrak{d}(n) p(n) = \infty  \label{eq:rec-aperiodic}
\end{align}
then $v$ is recurrent by ${\cal D}$.
\end{lemma}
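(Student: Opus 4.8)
The plan is to mirror the proof of Lemma~\ref{lem:rec} almost verbatim, replacing the period-2 machinery with its aperiodic analogue. First I would set $f=\mathfrak{d}$ directly---there is no need to pass to the even truncation $2\lfloor \mathfrak{d}(n)/2\rfloor$ here, since in the ergodic case every time step (not only even ones) contributes to the return probability. So one simply takes $X_t$ the RWoGG according to ${\cal D}=(\mathfrak{d},G,P)$, and for each $n$ lets $Y^n_t$ be the time-homogeneous random walk on the static graph $G(n)$ under $P_n$ started at $v$, with $R''_n(t)=\Pr[Y^n_t=v]$.

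Next I would invoke the hypotheses. LHaGG together with Lemma~\ref{lemma:comp-static} gives $R(t)\ge R''_n(t)$ for $t\in(T_{n-1},T_n]$; here we apply Lemma~\ref{lemma:comp-static} with ${\cal D}$ itself in the role of the faster-growing chain and the static chain $Y^n$ in the role of the slower one, exactly as in the proof of Lemma~\ref{lem:rec}. Then I would chain the inequalities:
\begin{align*}
\sum_{t=1}^{\infty} R(t)
 &= \sum_{n=1}^{\infty}\sum_{t=T_{n-1}+1}^{T_n} R(t)
 \ \geq\ \sum_{n=1}^{\infty}\sum_{t=T_{n-1}+1}^{T_n} R''_n(t)
 \ =\ \sum_{n=1}^{\infty}\sum_{i=1}^{\mathfrak{d}(n)} R''_n(T_{n-1}+i).
\end{align*}
The key remaining point is a lower bound $R''_n(T_{n-1}+i)\ge p(n)=\pi_n(v)$ for every $i\ge 1$. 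This is the aperiodic analogue of Proposition~\ref{prop:reversible-monotone}: for a reversible, aperiodic (hence ergodic) transition matrix $P_n$, one has $P_n^{t+1}(v,v)\le P_n^{t}(v,v)$ and $\pi_n(v)\le P_n^{t}(v,v)$ for all $t\ge 1$ --- the monotone non-increase of the diagonal return probabilities and their convergence down to the stationary mass. (This is the standard spectral fact: writing $P_n$ in the $\pi_n$-inner-product basis, $P_n^{t}(v,v)/\pi_n(v)=\sum_k \lambda_k^{t}\langle \cdot\rangle^2$ with eigenvalues $\lambda_k\in(-1,1]$ except the Perron value $1$; reversibility plus aperiodicity is what forces $\lambda_k> -1$ so that each term, and hence the sum, is non-increasing in $t$ and bounded below by the $\lambda_k=1$ term $\pi_n(v)$. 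Actually for the bound $P_n^t(v,v)\ge \pi_n(v)$ one only needs $\lambda_k^t\ge 0$ contributions to dominate; the clean statement is Proposition 10.25-type and I would cite \cite{LevinPeres} for it, noting it holds for $P^t$ in place of $P^{2t}$ once $P$ is aperiodic.)

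Granting that bound, the computation finishes as
\begin{align*}
\sum_{t=1}^{\infty} R(t)\ \geq\ \sum_{n=1}^{\infty}\sum_{i=1}^{\mathfrak{d}(n)} p(n)\ =\ \sum_{n=1}^{\infty}\mathfrak{d}(n)\,p(n),
\end{align*}
which is $\infty$ precisely when \eqref{eq:rec-aperiodic} holds, so $v$ is recurrent by ${\cal D}$. The main obstacle --- really the only non-bookkeeping step --- is justifying the diagonal monotonicity/lower bound $\pi_n(v)\le P_n^{t+1}(v,v)\le P_n^{t}(v,v)$ in the aperiodic setting; everything else is a transcription of the period-2 proof with the even-time restriction removed. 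I would state this as a small lemma (or remark) citing the reversible spectral decomposition, and note that aperiodicity is exactly the hypothesis that rules out the $\lambda=-1$ eigenvalue which would otherwise break monotonicity --- this is why the period-2 case needed to pass to $P^2$ and even times, whereas here no such passage is required.
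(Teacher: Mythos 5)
Your overall architecture (drop the even truncation, compare phase by phase with the static chain via LHaGG and Lemma~\ref{lemma:comp-static}, then lower-bound the static return probabilities by the stationary mass) is the natural one, but the step you yourself identify as the only non-bookkeeping step is false as stated. For a reversible \emph{aperiodic} chain it is not true that $\pi_n(v)\le P_n^{t+1}(v,v)\le P_n^{t}(v,v)$ for all $t\ge 1$. Take the simple random walk on the triangle $K_3$: it is irreducible, reversible and aperiodic, yet $P(v,v)=0<\tfrac13=\pi(v)$, $P^2(v,v)=\tfrac12>P(v,v)$, and $P^3(v,v)=\tfrac14<\tfrac13=\pi(v)$. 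Spectrally the eigenvalues are $1,-\tfrac12,-\tfrac12$: aperiodicity only excludes $\lambda=-1$ (which is what gives convergence $P^t(v,v)\to\pi(v)$); it does not exclude eigenvalues in $(-1,0)$, and for such $\lambda$ the contributions $\lambda^{t}f_\lambda(v)^2$ alternate in sign, so the diagonal entries are neither monotone nor bounded below by $\pi_n(v)$ along \emph{all} times. Proposition~\ref{prop:reversible-monotone} is genuinely an even-time statement; its all-$t$ analogue requires nonnegative spectrum (e.g.\ laziness), which this lemma does not assume.

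The repair is to keep the even-time restriction after all: within phase $n$ there are at least $\lfloor\mathfrak{d}(n)/2\rfloor\ge(\mathfrak{d}(n)-1)/2$ times at which the static chain $Y^n$ has taken an even number of steps, and at each of these the reversible bound $P_n^{2t'}(v,v)\ge\pi_n(v)$ applies, yielding $\sum_t R(t)\ge\tfrac12\sum_n(\mathfrak{d}(n)-1)p(n)$ exactly as in Lemma~\ref{lem:rec}; this is presumably what the paper's ``proved similarly'' intends. Note, however, that this establishes recurrence under $\sum_n(\mathfrak{d}(n)-1)p(n)=\infty$ rather than the stated $\sum_n\mathfrak{d}(n)p(n)=\infty$; the two coincide whenever $\sum_n p(n)<\infty$ (the case in all of the paper's applications), but the degenerate case $\sum_n p(n)=\infty$ with small $\mathfrak{d}(n)$ needs a separate remark. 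If you want your cleaner ``every step contributes'' version, you must add the hypothesis that each $P_n$ has nonnegative spectrum (for instance that the walks are lazy), under which your monotonicity claim and the rest of your argument are correct verbatim.
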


\subsection{Transient}\label{sec:trans}
 This section establishes the following lemma, which suggests Lemma~\ref{lem:rec} is nearly optimal. 
 In fact, we will provide an example of a random walk on a growing $k$-ary tree in Section~\ref{sec:karytree}, 
  that shows  a tight example of Lemma~\ref{lem:rec}. 
\begin{lemma}\label{lem:trans}
 Suppose that a RWoGG ${\cal D} = (\mathfrak{d},G,P)$ is LHaGG, and 
  that every $P(n) =P_n$ ($n=1,2,\ldots$) is irreducible and reversible with ${\rm period}(P_n) = 2$. 
 Let $p(n) = \mathring{\pi}_n(v)$ where $\mathring{\pi}_n$ denote the even-time stationary distribution of $P_n$. 
 Let  $\mathring{\tau}_n(\epsilon)$ denote the even mixing-time of $P(n)$, and 
 let 
\begin{align*}
 \mathring{\mathfrak{t}}(n) = \mathring{\tau}_{n}(p(n))
\end{align*} 
for $n=2,3,\ldots$. 
If 
\begin{align}
\max\left\{\mathfrak{d}(1), \mathring{\mathfrak{t}}(1) \right\} + 
\sum_{n=2}^{\infty} 
 \max\left\{\mathfrak{d}(n), \mathring{\mathfrak{t}}(n) \right\} p(n-1) < \infty 
\label{eq:trans}
\end{align}
 holds then $v$ is transient by ${\cal D}$. 
\end{lemma}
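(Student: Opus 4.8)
The plan is to estimate $\sum_{t\ge1}R(t)$ directly, cutting the sum at the phase boundaries $T_n=\sum_{i\le n}\mathfrak{d}(i)$ and showing that the contribution of phase $n$ is $\Order\!\left(\max\{\mathfrak{d}(n),\mathring{\mathfrak{t}}(n)\}\,p(n-1)\right)$. As a first step, exactly as in the proof of Lemma~\ref{lem:rec} but rounding \emph{up}, I would invoke LHaGG to replace $\mathfrak{d}$ by $f(n)=2\lceil\mathfrak{d}(n)/2\rceil$: since $\sum_{i\le n}f(i)\ge\sum_{i\le n}\mathfrak{d}(i)$ for every $n$, LHaGG gives $R(t)\le R_f(t)$, and passing from $\mathfrak{d}$ to $f$ inflates the left side of \eqref{eq:trans} by at most $1+\sum_{n\ge2}p(n-1)$, which is finite because $\mathring{\mathfrak{t}}(n)\ge2$ forces $\sum_n p(n-1)\le\tfrac12\sum_n\max\{\mathfrak{d}(n),\mathring{\mathfrak{t}}(n)\}\,p(n-1)<\infty$. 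Hence we may assume every $\mathfrak{d}(n)$ is even; then $R(t)=0$ at every odd $t$ (the remark following Lemma~\ref{lemma:comp-static}) and every $T_n$ is even.

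Let $\nu_n$ denote the law of $X_{T_{n-1}}$, a probability vector supported on $V(n-1)\subseteq U_n$; on $[T_{n-1},T_n]$ the walk is the time-homogeneous $P_n$-chain started from $\nu_n$. I would split the phase-$n$ block $\sum_{i=1}^{\mathfrak{d}(n)/2}R(T_{n-1}+2i)$ at the even mixing time $\mathring{\mathfrak{t}}(n)=\mathring{\tau}_n(p(n))$. For $2i\ge\mathring{\mathfrak{t}}(n)$ the definition of $\mathring{\tau}_n$ gives $P_n^{2i}(u,v)\le\mathring{\pi}_n(v)+p(n)=2p(n)$ for every $u\in U_n$, hence $R(T_{n-1}+2i)=\sum_u\nu_n(u)P_n^{2i}(u,v)\le2p(n)$; using $p(n)\le p(n-1)$ (see below) and the fact that there are at most $\mathfrak{d}(n)/2$ such terms, this sub-block is $\le\mathfrak{d}(n)\,p(n-1)$. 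For $2i<\mathring{\mathfrak{t}}(n)$ --- at most $\mathring{\mathfrak{t}}(n)/2$ terms --- I would appeal to a \emph{key estimate}, namely $R(T_{n-1}+s)\le C\,p(n-1)$ for all $0\le s\le\mathfrak{d}(n)$ with an absolute constant $C$, so that this sub-block is $\le\tfrac{C}{2}\,\mathring{\mathfrak{t}}(n)\,p(n-1)$. Adding the two sub-blocks gives a phase-$n$ contribution $\le(1+\tfrac C2)\max\{\mathfrak{d}(n),\mathring{\mathfrak{t}}(n)\}\,p(n-1)$; summing over $n\ge2$ against \eqref{eq:trans}, and noting the $n=1$ block is at most $\mathfrak{d}(1)/2<\infty$, yields $\sum_t R(t)<\infty$. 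The monotonicity $p(n)\le p(n-1)$ itself follows from LHaGG: comparing (via LHaGG) RWoGGs whose $n$-th, resp.\ $(n-1)$-st, phase lasts arbitrarily long gives $P_n^{2s}(v,v)\le P_{n-1}^{2s}(v,v)$ for all $s$, and letting $s\to\infty$ with Proposition~\ref{prop:reversible-monotone} turns this into $p(n)=\lim_s P_n^{2s}(v,v)\le\lim_s P_{n-1}^{2s}(v,v)=p(n-1)$.

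It remains to prove the key estimate, which is the heart of the argument; I would do this by induction on $n$, carrying the stronger invariant that $\nu_n$ has bounded density against the previous phase's quasi-stationary distribution, $\nu_n(u)\le C\,\mathring{\pi}_{n-1}(u)$ for all $u\in V(n-1)$. Granting the invariant, reversibility of $P_n^2$ with respect to $\mathring{\pi}_n$ lets one write, for even $s$, $R(T_{n-1}+s)=\sum_u\nu_n(u)P_n^s(u,v)\le C\sum_u\mathring{\pi}_{n-1}(u)P_n^s(u,v)=C\,p(n)\sum_u P_n^s(v,u)\,\frac{\mathring{\pi}_{n-1}(u)}{\mathring{\pi}_n(u)}\le C\,p(n)\max_{u\in V(n-1)}\frac{\mathring{\pi}_{n-1}(u)}{\mathring{\pi}_n(u)}$; for a simple random walk $\frac{\mathring{\pi}_{n-1}(u)}{\mathring{\pi}_n(u)}=\frac{\deg_{n-1}(u)}{\deg_n(u)}\cdot\frac{|E_n|}{|E_{n-1}|}\le\frac{|E_n|}{|E_{n-1}|}$ (and more generally it is the ratio of the suitably normalized reversing measures), so that, using that the local structure of $v$ is eventually unchanged, $C\,p(n)\cdot\frac{|E_n|}{|E_{n-1}|}=C\,\frac{\deg_n(v)}{|E_{n-1}|}=C\,p(n-1)$, as required. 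The invariant would be propagated from $\nu_n$ to $\nu_{n+1}=\nu_n P_n^{\mathfrak{d}(n)}$ by combining the $L^\infty$-contraction $\|\nu_{n+1}/\mathring{\pi}_n\|_\infty\le\|\nu_n/\mathring{\pi}_n\|_\infty$ with the even-mixing-time bound (which pins $\nu_{n+1}$ to within $p(n)$ of $\mathring{\pi}_n$ whenever $\mathfrak{d}(n)\ge\mathring{\mathfrak{t}}(n)$) and the limit-distribution proposition of Section~\ref{sec:term2}. The hard part is making this induction close with a constant $C$ independent of $n$: a phase that is shorter than its mixing time contributes a factor $|E_n|/|E_{n-1}|$ to the density, which is only ``reset'' by the phases that do mix, so the estimate is uniform only if the quasi-stationary distributions near $v$ do not drift too far along runs of short phases. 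I would therefore isolate this drift bound as an explicit mild hypothesis on $(G,P)$ and verify it for the concrete families of Sections~\ref{sec:karytree}--\ref{sec:cube}, where $G(n)$ grows only in its periphery and the neighbourhood of $v$ is eventually frozen.
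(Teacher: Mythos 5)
Your outer decomposition (cut at phase boundaries, split each phase at its even mixing time, bound the post-mixing terms by $2p(n)$) is sound, and your reduction to even durations and your derivation of $p(n)\le p(n-1)$ from LHaGG both work. But the proof does not close: everything hinges on your ``key estimate'' $R(T_{n-1}+s)\le C\,p(n-1)$ for the pre-mixing portion $0\le s<\mathring{\mathfrak{t}}(n)$ of phase $n$, and you do not prove it. Your proposed induction on the density $\nu_n(u)\le C\,\mathring{\pi}_{n-1}(u)$ accumulates a factor of order $|E_n|/|E_{n-1}|$ across every phase that is shorter than its mixing time, and, as you yourself concede, it only closes under an additional ``drift'' hypothesis on $(G,P)$. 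That changes the statement: Lemma~\ref{lem:trans} assumes only LHaGG, irreducibility, reversibility and period $2$, and is meant to apply precisely in regimes where $\mathfrak{d}(n)$ can be much smaller than $\mathring{\mathfrak{t}}(n)$ (this is exactly the transient regime for the growing $k$-ary tree). So as written this is a genuine gap, not a stylistic difference.

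The idea you are missing is that LHaGG can be applied \emph{twice}, once in each direction, and this removes the need for any control of the law of $X_{T_{n-1}}$. First, replace $\mathfrak{d}$ by $f(n)=\max\{\mathfrak{d}(n),\mathring{\mathfrak{t}}(n)\}$: since $\sum_{i\le n}\mathfrak{d}(i)\le\sum_{i\le n}f(i)$, the original walk grows faster, so LHaGG gives $R(t)\le R'(t)$ where $R'$ is the return probability for $(f,G,P)$ --- i.e.\ you may assume every phase lasts at least its even mixing time, at the cost of replacing $\mathfrak{d}(n)$ by $\max\{\mathfrak{d}(n),\mathring{\mathfrak{t}}(n)\}$ in the final sum, which is exactly the quantity appearing in \eqref{eq:trans}. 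Second, for each $n\ge 2$ compare $(f,G,P)$ with the walk $(g_{n-1},G,P)$ that agrees with it through phase $n-2$ and then stays on $G(n-1)$ forever; LHaGG gives $R'(t)\le R''_{n-1}(t)$. Because phase $n-1$ of the inflated walk lasts $f(n-1)\ge\mathring{\tau}_{n-1}(p(n-1))$ steps, this frozen comparison walk has already mixed on $G(n-1)$ by time $T'_{n-1}$, so $R''_{n-1}(t)\le\mathring{\pi}_{n-1}(v)+p(n-1)=2p(n-1)$ for every even $t$ in the \emph{entire} interval $(T'_{n-1},T'_n]$ --- including the part you were treating as ``pre-mixing.'' Summing gives $\sum_t R(t)\le f(1)+2\sum_{n\ge2}f(n)p(n-1)<\infty$. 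In short: your key estimate is true (with $C=2$) for the duration-inflated walk, and LHaGG transfers the conclusion back to the original one; no density invariant or extra hypothesis is needed.
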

\begin{proof}
Let 
\begin{align*}
f(n) = \max\left\{ \mathfrak{d}(n), \mathring{\mathfrak{t}}(n) \right\}
\end{align*}
for $n=1,2,3,\ldots$. 
Let 
$R(t)$ and $R'(t)$ respectively denote the return probabilities of 
${\cal D} =(\frakd,G,P)$ and ${\cal D}'=(f,G,P)$. 
Clearly, $f(n) \geq \mathfrak{d}(n)$ for any $n$, 
LHaGG implies 
\begin{align}
R(t) \leq R'(t) 
\label{eq:20230214b}
\end{align}
for any $t = 0,1,2,\ldots$. 
For convenience, let 
\begin{align}
T'_n = \sum_{k=1}^n f(k)
\end{align} 
for $n=1,2,\ldots$.

We carry a tricky argument in the following:
 roughly speaking we compare ${\cal D}'$ with $P_{n-1}$ in the $n$-th round, 
 i.e., $[T_{n-1},T_n]$, for $n=2,3,\ldots$. 
Let 
\begin{align*}
 g_{n-1}(k)=\begin{cases} f (k) & (k \leq n-2) \\ \infty & (k=n-1)\end{cases}
\end{align*}
 for $n=2,3,\ldots$. 
 Let $Z^{(n-1)}_t$ ($t=0,1,2,\ldots$) denote a RWoGG $(g_{n-1},G,P)$, where $Z^{(n-1)}_0=v$. 
 Let $R''_{n-1}(t)$ denote the return probability of $Z_t^{(n-1)}$, 
 Clearly, $\sum_{i=1}^j f(i) \leq \sum_{i=1}^j g_{n-1}(i) $ holds for any $j$, hence the LHaGG assumption implies 
 \begin{align}
  R'(t) \leq R''_{n-1}(t)
\label{eq:20231223a}
 \end{align}
   for any $t=0,1,2\ldots$ for any $n=2,3,\ldots$. 

 Notice that 
 $Z^{(n-1)}_t$ for $t \in [T_{n-2},T_n]$ is nothing but 
   a time-homogeneous random walk according to $P_{n-1}$ with the ``initial state'' $Z_{T_{n-2}}=v$
   for $n=2,3,\ldots$. 
Since 
\begin{align*}
  T'_{n-1} = T'_{n-2} + f(n-1) \geq T'_{n-2} + \mathring{\mathfrak{t}}(n-1) = T'_{n-2} + \mathring{\tau}_{n-1}(p(n-1))
\end{align*}
 $Z^{(n-1)}_t$ mixes well for $t > T'_{n-1}$,  
 meaning that $|\Pr[Z^{(n-1)}_t=v] - \mathring{\pi}_{n-1}(v)| \leq p(n-1)$ 
  for any even $t \in (T'_{n-1},T_n]$. 
This implies 
\begin{align}
R''_{n-1}(t) &= \Pr[Z^{(n-1)}_t=v] 
 \leq \mathring{\pi}_{n-1}(v) + p(n-1) 
  = 2p(n-1)
\label{eq:20231223b}
\end{align}
holds\footnote{
    We remark this argument requires only {\em point-wise additive error} bound, 
    instead of total variation. Clearly, point-wise additive error is upper bounded by total variation.  
   We here use the mixing time for total variation just because it has been better analyzed than the other. 
   \label{foot:additive-error}} 
 for $t \in (T'_{n-1},T'_n]$, where we remark that $R''_{n-1}(t) = 0$ for any odd $t$. 
%%%
 Then, 
 \begin{align*}
 \sum_{t=1}^{\infty}R(t) 
 &\leq \sum_{t=1}^{\infty}R'(t) && (\mbox{by \eqref{eq:20230214b}})\nonumber \\
 & = \sum_{n=1}^{\infty} \sum_{t=T'_{n-1}+1}^{T'_n}R'(t) \nonumber \\
 & \leq f(1) + \sum_{n=2}^{\infty} \sum_{t=T'_{n-1}+1}^{T'_n}R'(t) \nonumber \\
 & \leq f(1) + \sum_{n=2}^{\infty} \sum_{t=T'_{n-1}+1}^{T'_n}R''_{n-1}(t) 
 && (\mbox{by \eqref{eq:20231223a}}) \nonumber \\
 & \leq f(1) + \sum_{n=2}^{\infty}  \sum_{t=T'_{n-1}+1}^{T'_n} 2p(n-1)
 && (\mbox{by \eqref{eq:20231223b}}) \nonumber \\
&=  f(1) + 2\sum_{n=2}^{\infty} f(n)p(n-1) 
\end{align*}
holds. 
 Now it is easy to see that \eqref{eq:trans} implies $\sum_{t=1}^{\infty}R(t)<\infty $, meaning that $v$ is transient by $\mathcal{D}$. 
\end{proof}

The following lemma for aperiodic random walk is proved similarly. 
\begin{lemma}\label{lem:trans-ergodic}
 Suppose that a RWoGG ${\cal D} = (\mathfrak{d},G,P)$ is LHaGG, and 
  that every $P(n) =P_n$ ($n=1,2,\ldots$) is irreducible, reversible and aperiodic, i.e., ergodic. 
 Let $p(n) = \pi_n(v)$ where $\pi_n$ denote the stationary distribution of $P_n$. 
 Let  $\tau_n(\epsilon)$ denote the mixing time of $P(n)$, and 
 let 
\begin{align*}
 \mathfrak{t}(n) = \tau_{n}(p(n))
\end{align*} 
for $n=2,3,\ldots$. 
If 
\begin{align*}
\max\left\{\mathfrak{d}(1), \mathfrak{t}(1) \right\} + 
\sum_{n=2}^{\infty} 
 \max\left\{\mathfrak{d}(n), \mathfrak{t}(n) \right\} p(n-1) < \infty 
\end{align*}
 holds then $v$ is transient by ${\cal D}$. 
\end{lemma}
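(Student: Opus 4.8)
The plan is to follow the proof of Lemma~\ref{lem:trans} almost verbatim, replacing the period-2 objects $\mathring{\pi}_n$, $\mathring{\tau}_n$, $\mathring{\mathfrak{t}}$ by their ergodic counterparts $\pi_n$, $\tau_n$, $\mathfrak{t}$; since every $P_n$ is now ergodic there is no parity to track, so the argument is in fact shorter than that of Lemma~\ref{lem:trans}. First I would set $f(n)=\max\{\mathfrak{d}(n),\mathfrak{t}(n)\}$ for $n=1,2,\dots$ (with $\mathfrak{t}(1)=\tau_1(p(1))$) and $T'_n=\sum_{k=1}^n f(k)$, let ${\cal D}'=(f,G,P)$, and let $R'(t)$ be its return probability. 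Since $f(n)\ge\mathfrak{d}(n)$ for every $n$, condition \eqref{eq:grow-faster} holds for the pair $({\cal D},{\cal D}')$, so LHaGG gives $R(t)\le R'(t)$ for all $t$. Note that ergodicity of each $P_n$ guarantees that $\pi_n$ is its unique stationary distribution, that $p(n)=\pi_n(v)\in(0,1)$ by irreducibility on the finite state space $V(n)$, and that $\mathfrak{t}(n)=\tau_n(p(n))$ is finite --- this is exactly what makes $f(n)$ well-defined, standing in for the even-time machinery of the period-2 version.

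Next I would reuse the ``restart'' device of Lemma~\ref{lem:trans}: for each $n\ge2$ put $g_{n-1}(k)=f(k)$ for $k\le n-2$ and $g_{n-1}(n-1)=\infty$, let $Z^{(n-1)}_t$ be the RWoGG according to $(g_{n-1},G,P)$ with $Z^{(n-1)}_0=v$, and let $R''_{n-1}(t)$ be its return probability. Because $\sum_{i=1}^j f(i)\le\sum_{i=1}^j g_{n-1}(i)$ for every $j$, LHaGG gives $R'(t)\le R''_{n-1}(t)$ for all $t$; and since $g_{n-1}$ assigns infinite duration to phase $n-1$, the process $Z^{(n-1)}_t$ for $t\ge T'_{n-2}$ is nothing but a time-homogeneous walk governed by $P_{n-1}$ started from $v$ at time $T'_{n-2}$.

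Then comes the mixing estimate, the one step deserving a small check. For $t\in(T'_{n-1},T'_n]$ we have $t-T'_{n-2}>f(n-1)\ge\mathfrak{t}(n-1)=\tau_{n-1}(p(n-1))$, so by the definition of mixing time --- and since pointwise additive error is bounded by total-variation distance, as noted in Footnote~\ref{foot:additive-error} --- $\bigl|\Pr[Z^{(n-1)}_t=v]-\pi_{n-1}(v)\bigr|\le p(n-1)$, whence $R''_{n-1}(t)\le\pi_{n-1}(v)+p(n-1)=2p(n-1)$. Finally I would sum by phases, bounding the $n=1$ block crudely by $T'_1=f(1)$ via $R'\le1$:
\begin{align*}
\sum_{t=1}^\infty R(t) &\le \sum_{t=1}^\infty R'(t)=\sum_{n=1}^\infty\sum_{t=T'_{n-1}+1}^{T'_n}R'(t)\\
&\le f(1)+\sum_{n=2}^\infty\sum_{t=T'_{n-1}+1}^{T'_n}R''_{n-1}(t)\le f(1)+2\sum_{n=2}^\infty f(n)\,p(n-1).
\end{align*}
Substituting $f(n)=\max\{\mathfrak{d}(n),\mathfrak{t}(n)\}$, the right-hand side is precisely the quantity in the hypothesis, which is finite by assumption; hence $\sum_t R(t)<\infty$, i.e., $v$ is transient by ${\cal D}$.

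I do not expect a genuine obstacle: the only points requiring care are that the mixing bound is needed only in the pointwise-additive sense (so the total-variation mixing time suffices) and that $\mathfrak{t}(n)$ is finite, both of which follow directly from the ergodicity hypothesis. Everything else is a word-for-word transcription of the proof of Lemma~\ref{lem:trans} with the parity considerations removed.
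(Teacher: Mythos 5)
Your proof is correct and is exactly the argument the paper intends: the paper gives no separate proof of this lemma, stating only that it ``is proved similarly'' to Lemma~\ref{lem:trans}, and your write-up is precisely that adaptation (same restart device $g_{n-1}$, same mixing estimate yielding $R''_{n-1}(t)\le 2p(n-1)$, same phase-wise summation) with the parity bookkeeping removed. The two small points you flag --- finiteness of $\mathfrak{t}(n)$ from ergodicity and the sufficiency of the total-variation mixing time for the pointwise bound --- are handled the same way in the paper's proof of Lemma~\ref{lem:trans}.
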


\paragraph{Remark.}
As we remarked at footnote \ref{foot:additive-error}, 
Lemma~\ref{lem:trans-ergodic} (similarly Lemma~\ref{lem:trans}, but here we omit) 
  is slightly improved using the mixing time by {\em point-wise additive error} as follows, but we do not use the fact in this paper. 

\begin{proposition}\label{prop:trans2}
 Suppose that a RWoGG ${\cal D} = (\mathfrak{d},G,P)$ is LHaGG, and 
  that every $P(n) =P_n$ ($n=1,2,\ldots$) is irreducible, reversible and aperiodic, i.e., ergodic. 
 Let $p(n) = \pi_n(v)$ where $\pi_n$ denote the stationary distribution of $P_n$. 
 Let
\begin{align*}
 \mathfrak{t}'(n) = \max_{u \in V} \left\{ t \in \mathbb{Z}_{>0} \mid \forall v \in V,\,  |P^t(u,v)-\pi(v) | \leq p(n) \right\}
\end{align*} 
for $n=2,3,\ldots$, 
If 
\begin{align*}
\sum_{n=1}^{\infty} 
 (\max\left\{\mathfrak{d}(n), \mathfrak{t}'(n) \right\} +1) p(n) < \infty 
\end{align*}
 holds then $v$ is transient by ${\cal D}$. 
\end{proposition}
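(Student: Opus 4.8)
The plan is to rerun the proof of Lemma~\ref{lem:trans} (equivalently Lemma~\ref{lem:trans-ergodic}) almost verbatim, replacing the total-variation mixing time $\tau_n$ by the point-wise additive-error quantity $\mathfrak{t}'(n)$ throughout; as footnote~\ref{foot:additive-error} already observes, every mixing estimate in that proof is used only as a one-sided additive error at the single vertex $v$, so nothing is lost by using $\mathfrak{t}'(n)$ (which is no larger than the corresponding total-variation mixing time). Concretely, I would set $f(n)=\max\{\mathfrak{d}(n),\mathfrak{t}'(n)\}+1$ and let $R(t),R'(t)$ be the return probabilities of ${\cal D}=(\mathfrak{d},G,P)$ and ${\cal D}'=(f,G,P)$; since $f(n)\ge\mathfrak{d}(n)$ for all $n$, LHaGG gives $R(t)\le R'(t)$, so it suffices to bound $\sum_{t\ge1}R'(t)$, which I split over the phases $(T'_{n-1},T'_n]$ of ${\cal D}'$ with $T'_n=\sum_{k=1}^n f(k)$.

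First I would treat the \emph{equilibrated part} of phase $n$, namely the steps $t\in(T'_{n-1}+\mathfrak{t}'(n),T'_n]$, by comparing ${\cal D}'$ against the frozen RWoGG $Z^{(n)}=(g_n,G,P)$ with $g_n(k)=f(k)$ for $k<n$ and $g_n(n)=\infty$; its partial sums dominate those of $f$, so LHaGG gives $R'(t)\le\Pr[Z^{(n)}_t=v]$, and since $Z^{(n)}$ runs the time-homogeneous chain $P_n$ for all $t\ge T'_{n-1}$, the definition of $\mathfrak{t}'(n)$ yields $\Pr[Z^{(n)}_t=v]\le\pi_n(v)+p(n)=2p(n)$ on that range. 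As $f(n)\ge\mathfrak{t}'(n)+1$, there are at most $\max\{\mathfrak{d}(n),\mathfrak{t}'(n)\}+1$ such steps, so the equilibrated parts contribute at most $2\sum_{n\ge1}(\max\{\mathfrak{d}(n),\mathfrak{t}'(n)\}+1)p(n)$, finite by hypothesis (phase $1$ is handled by the same $Z^{(1)}$ comparison together with the trivial bound $R'(t)\le1$ on its first $\mathfrak{t}'(1)$ steps, which contribute only a constant since $G(1)$ is finite). What remains are the first $\mathfrak{t}'(n)$ steps of each phase $n\ge2$; the naive bound for these is the comparison against phase $n-1$ used in Lemma~\ref{lem:trans}, which freezes at phase $n-1$ so that by time $T'_{n-1}$ the walk has run $P_{n-1}$ for at least $f(n-1)\ge\mathfrak{t}'(n-1)$ steps and is therefore within $p(n-1)$ of $\pi_{n-1}$ throughout, giving $R'(t)\le 2p(n-1)$ and a total of at most $2\sum_{n\ge2}\mathfrak{t}'(n)\,p(n-1)$.

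Combining the two parts gives $\sum_{t\ge1}R(t)\le\sum_{t\ge1}R'(t)\le 2\sum_{n\ge1}(\max\{\mathfrak{d}(n),\mathfrak{t}'(n)\}+1)p(n)+2\sum_{n\ge2}\mathfrak{t}'(n)\,p(n-1)+\Order(1)$. The first sum is finite by the hypothesis of Proposition~\ref{prop:trans2}, so the crux — and the step I expect to be the main obstacle — is the second sum: to obtain the stated condition (which features only $p(n)$) one must either invoke $p(n-1)=\Order(p(n))$, which holds in every example of the paper but not in general, or replace the comparison against phase $n-1$ on the pre-equilibration segment by an estimate of the form $\sum_{s=1}^{\mathfrak{t}'(n)}\Pr[X'_{T'_{n-1}+s}=v]=\Order(\mathfrak{t}'(n)\,p(n))$. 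I would try to establish the latter from reversibility of $P_n$, using the monotonicity $P_n^{2t+2}(v,v)\le P_n^{2t}(v,v)$ of Proposition~\ref{prop:reversible-monotone} together with the fact that $\mathfrak{t}'(n)$ controls only the coordinate at $v$; getting this one estimate right is where essentially all the real work sits, the remainder being a routine rerun of the computation displayed in the proof of Lemma~\ref{lem:trans}.
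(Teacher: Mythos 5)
Your set-up --- reducing to ${\cal D}'=(f,G,P)$ via LHaGG, comparing each phase with a frozen walk $Z^{(m)}$, and substituting the point-wise additive error for total variation as footnote~\ref{foot:additive-error} licenses --- is exactly the machinery of Lemma~\ref{lem:trans}, and your treatment of the equilibrated part of each phase is correct. But the proposal is not a proof of the stated claim, and you say so yourself: the first $\mathfrak{t}'(n)$ steps of phase $n$ are only bounded by $2\,\mathfrak{t}'(n)\,p(n-1)$ via the $Z^{(n-1)}$ comparison, while the hypothesis of Proposition~\ref{prop:trans2} controls $\sum_n(\max\{\mathfrak{d}(n),\mathfrak{t}'(n)\}+1)p(n)$ and says nothing about $\sum_n\mathfrak{t}'(n)p(n-1)$; since neither $p(n-1)=\Order(p(n))$ nor $\mathfrak{t}'(n)=\Order(\mathfrak{t}'(n-1))$ is assumed, that cross term is genuinely uncontrolled. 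The fallback you sketch --- deriving $\sum_{s\le\mathfrak{t}'(n)}\Pr[X'_{T'_{n-1}+s}=v]=\Order(\mathfrak{t}'(n)p(n))$ from reversibility and Proposition~\ref{prop:reversible-monotone} --- does not go through as described: that proposition controls $P_n^{2t}(v,v)$ for the chain \emph{started at} $v$, whereas at time $T'_{n-1}$ the walk is distributed approximately as $\pi_{n-1}$, and rewriting $\sum_u\mu(u)P_n^s(u,v)=\pi_n(v)\sum_u\frac{\mu(u)}{\pi_n(u)}P_n^s(v,u)$ by reversibility reintroduces the ratio $\max_u \pi_{n-1}(u)/\pi_n(u)$, which is precisely the factor $p(n-1)/p(n)$ you are trying to eliminate. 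So the crux you identify is a real gap, and it remains open in your write-up.

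For context, the paper supplies no proof of Proposition~\ref{prop:trans2} (it is stated as an unused aside, ``proved similarly''), and the verbatim adaptation of Lemma~\ref{lem:trans-ergodic} with the point-wise mixing time yields the condition $\sum_{n\ge2}\max\{\mathfrak{d}(n),\mathfrak{t}'(n)\}\,p(n-1)<\infty$, not the stated condition with $p(n)$. You have therefore put your finger on exactly the discrepancy between the stated hypothesis and what the coupling framework delivers. In every example treated in the paper one has $p(n-1)=\Theta(p(n))$, so the two conditions coincide there; but to prove the proposition as literally stated you would need either such a regularity assumption on $p$ (or on $\mathfrak{t}'$), or a genuinely finer estimate on the pre-equilibration segment --- e.g.\ a Green's-function argument showing that the expected number of returns during those $\mathfrak{t}'(n)$ steps totals $\Order(p(n-1))$ rather than $\Order(\mathfrak{t}'(n)\,p(n-1))$ --- neither of which appears in the proposal. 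As submitted, your argument proves only the $p(n-1)$ variant of the statement.
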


\section{Random Walk on a Growing Complete $k$-ary Tree}\label{sec:karytree}
 Lyons gave sufficient conditions 
   that a random walk on an infinite tree gets recurrent or transient at the root (initial point), cf.~\cite{Lyons90,LyonsPeres}, 
  as a consequence, it is a celebrated fact that a simple random walk on an infinite $k$-ary tree is transient. 
 This section shows 
    that a simple random walk on a {\em moderately} growing complete $k$-ary tree 
   is recurrent at the root. 

\subsection{Result summary}
 Let $k$ be an integer greater than one, and 
  let $\tree_n = (V_n,E_n)$ denote a {\em complete $k$-ary tree} with height $n$ for $n=1,2,\ldots$, 
  i.e., $|V_n| = \sum_{i=0}^{n} k^i = \frac{k^{n+1} - 1}{k-1}$, 
     every internal node (including the root) has exactly $k$ children, and 
     every leaf places the same height $n$. 
 Let $r \in V_n$ denote the root, that is the unique vertex of height 0.  
 For convenience, let $h(v)$ denote the height of vertex $v \in V_n$, 
   i.e., $h(r) =0$, and $h(v)=n$ if and only if $v$ is a leaf of $\tree_n$.  
 Let 
\begin{align} 
 U_n = \{ v \in V_n \mid h(v) \equiv 0 \pmod{2}\}
\end{align}
  denote the vertices of even heights, and 
  thus $\overline{U}_n = V_n \setminus U_n$ is the vertices of odd heights. 
 Clearly, $\tree_n=(U_n,\overline{U}_n;E_n)$ is a bipartite graph.  
 See \cite{Cormen} for a standard terminology about a complete $k$-ary tree, 
  e.g., parent, child, root, internal node, leaf, height. 

%%%%
 Next, we define a transition probability of a random walk over $\tree_n$ according to \cite{Lyons90,LyonsPeres}.
 Let $\lambda$ be a fixed positive real\footnote{
   For simplicity of notation, Lyons~\cite{Lyons90} and Lyons and Peres~\cite{LyonsPeres} assume $\lambda>1$, 
   but many arguments are naturally extended to $\lambda >0$ by modifications with some bothering notations. 
 }, and we define a transition probability on the $k$-ary tree $\tree_n$ with height $n$ by 
\begin{align} 
P_n(u,v) =
\begin{cases}
\frac{1}{k} &\text{if $u = r$ and $v$ is a child of $u$,} \\
\frac{1}{\lambda+k} & \text{if $u \neq r$ and $v$ is a child of $u$,} \\
\frac{\lambda}{\lambda+k} & \text{if $u$ is an internal node and $v$ is the parent of $u$,} \\
1 &\text{if $u$ is a leaf and $v$ is the parent of $u$,}\\
0 & \text{otherwise,}
\end{cases}
\label{eq:comp-tree0}
\end{align}
   for $u,v \in V_n$. 
Notice that \eqref{eq:comp-tree0} denotes a {\em simple random walk} over $T_n$ when $\lambda =1$. 
 We also remark that $\lambda$ and $k$ are constants to $n$. 
 As a consequence of \cite{Lyons90},  
  we know the following fact about a random walk on an infinite $k$-ary tree $T_{\infty}$. 
\begin{proposition}[\cite{Lyons90,LyonsPeres}]\label{thm:infty-tree} 
 If $\lambda \geq k$ (resp.\ $\lambda < k$) then the root $r$ is recurrent (resp.\ transient) by $P_{\infty}$. 
\end{proposition}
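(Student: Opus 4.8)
The plan is to reduce the claim to the classical recurrence dichotomy for one-dimensional birth--death chains, by projecting the walk onto its distance from the root. The first step is to observe that $T_\infty$ is \emph{spherically symmetric}: in $T_\infty$ the root has $k$ children and every other vertex has exactly $k$ children and one parent, so the transition rule \eqref{eq:comp-tree0} (its leaf case being vacuous here) assigns to each vertex $u$ probabilities that depend only on the height $h(u)$ and on whether the relevant neighbour is a child or the parent of $u$. Hence, if $X_0,X_1,\dots$ denotes the walk governed by $P_\infty$ with $X_0=r$, I would show that the projected process $D_t\defeq h(X_t)$ is itself a time-homogeneous Markov chain on $\mathbb{Z}_{\geq 0}$ with transition matrix $Q$ given by $Q(0,1)=1$ and, for $j\geq 1$,
\[
 Q(j,j+1)=\frac{k}{\lambda+k}=:p,\qquad Q(j,j-1)=\frac{\lambda}{\lambda+k}=:q .
\]
The verification is the standard lumpability check: for any $u$ of height $j\geq 1$ one has $\sum_{h(v)=j+1}P_\infty(u,v)=k\cdot\frac1{\lambda+k}=p$ and $\sum_{h(v)=j-1}P_\infty(u,v)=q$, independently of which height-$j$ vertex $u$ is, and likewise at the root; this is exactly strong lumpability of $(X_t)$ along the height function.

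The second step is to translate between the two notions of recurrence. The root is the unique vertex of $T_\infty$ with $h(r)=0$, so $X_t=r$ if and only if $D_t=0$; therefore $R(t)=\Pr[X_t=r\mid X_0=r]=Q^t(0,0)$ and $\sum_{t\geq 1}R(t)=\sum_{t\geq 1}Q^t(0,0)$. As $Q$ is irreducible on $\mathbb{Z}_{\geq 0}$, this series diverges exactly when the birth--death chain $(D_t)$ is recurrent, so it remains only to decide recurrence of $(D_t)$.

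For the third step I would appeal to the classical criterion for a birth--death chain reflected at $0$: it is recurrent iff $\sum_{n\geq 1}\prod_{j=1}^{n}(q_j/p_j)=\infty$ (equivalently, by a gambler's-ruin computation, the probability that the chain started at $1$ ever hits $0$ equals $\min\{1,q/p\}$, which is $1$ iff $q\geq p$, the balanced case $q=p$ giving null recurrence). Here $q_j/p_j=\lambda/k$ for every $j\geq 1$, so the series is $\sum_{n\geq 1}(\lambda/k)^{n}$, which diverges precisely when $\lambda\geq k$ and converges when $\lambda<k$. Combining the three steps yields recurrence of $r$ by $P_\infty$ for $\lambda\geq k$ and transience for $\lambda<k$, as claimed.

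The one genuinely delicate point is the first step: one must carefully justify that the height process $(D_t)$ is a Markov chain and that returns of $(X_t)$ to $r$ correspond precisely to returns of $(D_t)$ to $0$, both of which rest on spherical symmetry. A lumping-free alternative is the electrical-network criterion: $P_\infty$ is reversible, and with $\mu(r)=1$ one computes that the conductance of each edge joining level $j-1$ to level $j$ is $\frac1k\lambda^{-(j-1)}$ while the number of such edges is $k^{j}$, so the effective resistance from $r$ to level $n$ is $\sum_{j=1}^{n}(\lambda/k)^{j-1}$; as $n\to\infty$ this stays finite iff $\lambda<k$, and the walk on $T_\infty$ is transient iff the effective resistance from $r$ to infinity is finite.
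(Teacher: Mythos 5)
Your argument is correct. Note, however, that the paper does not prove this proposition at all --- it is stated as a known fact and attributed to Lyons and Lyons--Peres, whose own treatment goes through the electrical-network/branching-number criterion that you sketch as your ``lumping-free alternative.'' Your primary route, projecting $X_t$ onto the height process $D_t=h(X_t)$ and invoking the birth--death recurrence criterion $\sum_n \prod_{j\leq n}(q_j/p_j)=\infty$, is a clean and fully rigorous substitute: the strong-lumpability check is exactly right (each vertex at height $j\geq 1$ sends total mass $k/(\lambda+k)$ to height $j+1$ and $\lambda/(\lambda+k)$ to height $j$ minus one, uniformly over the level), the identification $R(t)=Q^t(0,0)$ holds because $r$ is the unique height-$0$ vertex, and the Green's-function characterization of recurrence matches the paper's definition \eqref{def:recurrent0} verbatim. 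It is worth observing that this projection is precisely the device the paper itself uses later for the growing tree (the process $Z_t=h(X_t)$ on the path graph $L(n)$ with transition matrix $Q_n$, Lemma~\ref{lem:path-Rt=R't}), so your proof is arguably more consonant with the paper's methods than the cited references are; the electrical-network computation you give (edge conductance $\tfrac1k\lambda^{-(j-1)}$ between levels $j-1$ and $j$, $k^j$ parallel edges, effective resistance $\sum_j(\lambda/k)^{j-1}$) is also correct and is essentially how the cited sources obtain the dichotomy. No gaps.
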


 Then, we are concerned with a RWaGG ${\cal D}_{\rm T} = (\mathfrak{d},G,P)$ starting from the root $r$ where $G(n)=\tree_n$ and $P(n) = P_n$. 
%%%%
 Our goal of the section is to establish the following theorem. 
\begin{theorem}\label{thm:karytree}
 Let $k \geq 2$ and $\lambda > 0$ be constants to $n$.  
 Then, the root $r$ is recurrent by $\mathcal{D}_{\rm T}$ if
\begin{align}
 \sum_{n=1}^{\infty}\mathfrak{d}(n) \left(\frac{\lambda}{k}\right)^n
 =\infty
\end{align}
holds, otherwise, transient.
\end{theorem}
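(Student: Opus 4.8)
The plan is to apply the general machinery of Section~\ref{sec:frame}: Lemma~\ref{lem:rec} for the recurrent direction and Lemma~\ref{lem:trans} for the transient direction. To invoke either, I first need two ingredients about the static chain $P_n$ on the complete $k$-ary tree $\tree_n$: (i) that the RWoGG $\mathcal{D}_{\rm T}=(\frakd,G,P)$ is LHaGG, and (ii) explicit (or at least sharp enough) estimates for $p(n)=\mathring\pi_n(r)$ and, for the transient half, for the even mixing time $\mathring{\mathfrak t}(n)=\mathring\tau_n(p(n))$.

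First I would compute the even-time stationary distribution. Since $P_n$ is reversible, I look for the reversing measure $\mu_n$ with $\mu_n(u)P_n(u,v)=\mu_n(v)P_n(v,u)$. Working along a root-to-leaf path and using the transition probabilities in \eqref{eq:comp-tree0}, the detailed-balance ratios give $\mu_n(v)\propto (k/\lambda)^{h(v)}$ up to the boundary corrections at the root and at the leaves (the root has out-rate $1/k$ instead of $1/(\lambda+k)$, and leaves have out-rate $1$). There are $k^{h}$ vertices at height $h$, so $\mu_n(\{h=j\})\propto k^{j}(k/\lambda)^{j}=(k^2/\lambda)^{j}$ modulo these corrections; restricting to even heights and normalizing yields $\mathring\pi_n(r)=p(n)=\Theta\bigl((\lambda/k^2)^{?}\cdot\text{something}\bigr)$ — more precisely I expect $p(n)$ to be comparable to $1/\sum_{j \text{ even}}(k^2/\lambda)^{j}$, whose behaviour splits into the three regimes $\lambda<k^2$ (then $p(n)=\Theta(1)$ as $n\to\infty$), $\lambda=k^2$, and $\lambda>k^2$. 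In every case the product $\frakd(n)p(n)$ is, up to constants, $\frakd(n)(\lambda/k^2)^{n}$ or $\frakd(n)$; I then need to reconcile this with the stated criterion $\sum\frakd(n)(\lambda/k)^{n}$. I suspect the cleaner route is to track $p(n)$ only up to the factor $(\lambda/k)^n$ that already appears in the theorem and absorb the rest: careful bookkeeping of the normalizing constant should show $p(n)\asymp (\lambda/k)^n$ is \emph{not} quite right, so I will instead verify that $\sum(\frakd(n)-1)p(n)=\infty \iff \sum\frakd(n)(\lambda/k)^n=\infty$ directly by comparing the two series term-by-term, using monotonicity of $(\lambda/k)^n$ and crude two-sided bounds on $p(n)$.

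For the LHaGG property I would argue as follows. Freezing the tree at height $n$ rather than letting it grow only \emph{adds} vertices that are farther from the root; intuitively this can only make the walk return \emph{more} often early, i.e.\ a faster-growing tree is less homesick. To make this rigorous I would set up an explicit coupling between the walk on $\mathcal{D}=(f,G,P)$ and the walk on $\mathcal{D}'=(g,G,P)$ with $\sum_{i\le n}f(i)\le\sum_{i\le n}g(i)$: run both walks on the (larger) tree and couple them so that they agree until the first time the slower-growing walk $\mathcal{D}'$ would need a vertex that $\mathcal{D}$ already has but $\mathcal{D}'$ does not; because the extra vertices live strictly below level-$h$ cuts, one shows by a reflection/identification at the frontier that $\Pr[X_t=r]\le\Pr[X'_t=r]$ for all $t$. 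This is the step I expect to be the main obstacle — the other steps are essentially calculus — because one must handle the degree change at vertices that switch from leaf to internal when the tree grows, and the coupling has to be defined consistently across a phase boundary $T_{n-1}$. I would look for the cleanest formulation, possibly by comparing the walks only through their projections onto the height coordinate $h(X_t)$, which is itself a birth–death chain on $\{0,1,\dots\}$ whose transition probabilities are monotone in the truncation level; monotone couplings of birth–death chains are standard and would give $\Pr[h(X_t)=0]\le\Pr[h(X'_t)=0]$ directly.

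Finally, for the transient direction I plug into Lemma~\ref{lem:trans}: I need $\mathring{\mathfrak t}(n)=\mathring\tau_n(p(n))$ to be small enough that $\max\{\frakd(n),\mathring{\mathfrak t}(n)\}p(n-1)$ is summable whenever $\sum\frakd(n)(\lambda/k)^n<\infty$. For the truncated birth–death chain on heights $0,\dots,n$ with an upward drift toward larger heights when $\lambda<k$, the relaxation time and hence the mixing time is $\poly(n)$ (indeed $\Order(n)$ or $\Order(n^2)$ via a standard path/canonical-flow bound, or via the explicit spectral gap of a drifted birth–death chain), while $\mathring\tau_n(\epsilon)$ only costs an extra additive $\Order(\log(1/\epsilon))=\Order(n)$ since $p(n)$ decays at most geometrically; so $\mathring{\mathfrak t}(n)=\poly(n)$. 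Then $\sum\frakd(n)(\lambda/k)^n<\infty$ forces $\frakd(n)(\lambda/k)^n\to 0$, and since $p(n-1)\le C(\lambda/k)^{\,n-1}\cdot(\text{sub-exponential})$ when $\lambda<k$, the term $\poly(n)\,p(n-1)$ is summable and the term $\frakd(n)p(n-1)$ is dominated by $\frakd(n)(\lambda/k)^n$ up to constants — giving transience. I would present the recurrent half in full detail and note that the transient half follows the same template with Lemma~\ref{lem:trans} in place of Lemma~\ref{lem:rec}, once the $\poly(n)$ mixing bound is recorded as a separate lemma.
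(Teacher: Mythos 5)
Your overall architecture (establish LHaGG by a monotone coupling of the height coordinate, then feed $p(n)$ and a mixing-time bound into Lemmas~\ref{lem:rec} and \ref{lem:trans}) matches the paper for the recurrent half, and the height-coupling idea for LHaGG is exactly Lemma~\ref{lem:lhagg-karytree}. But there is a genuine gap in your transient half. Lemma~\ref{lem:trans} requires the even mixing time of $P_n$ itself, i.e., of the walk on the whole tree $\tree_n$, and that quantity is \emph{exponential} in $n$, not $\poly(n)$: a walk started at a leaf is concentrated in the subtree of its starting leaf until it first returns to the root, which takes time of order $(k/\lambda)^n$, so $\mathring{\tau}_n(\epsilon)=\Omega((k/\lambda)^n)$ and the product $\mathring{\mathfrak t}(n)\,p(n-1)$ is of order $\lambda^n$ --- not summable for $\lambda\ge 1$ (in particular for the simple random walk $\lambda=1$). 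Your $\poly(n)$ bound is correct only for the birth--death chain on heights, and you cannot substitute it for $\mathring{\tau}_n$ in Lemma~\ref{lem:trans} as applied to $\mathcal{D}_{\rm T}$. The missing step is to make the height projection the object of the lemma: define the RWoGG $\mathcal{D}_{\rm L}=(\mathfrak{d},L,Q)$ on the growing path $\{0,\dots,n\}$ induced by $Z_t=h(X_t)$, observe that its return probability to $0$ \emph{equals} $R(t)$ and that $\mathring{\pi}'_n(0)=\mathring{\pi}_n(r)$, prove that $\mathcal{D}_{\rm L}$ is itself LHaGG, and then apply Lemma~\ref{lem:trans} to $\mathcal{D}_{\rm L}$, where the $\Order(n^2\log\epsilon^{-1})$ mixing bound is legitimate. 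You have all the ingredients in hand (you even propose the height projection for the coupling), but as written you are bounding the wrong chain's mixing time.

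Two smaller points. First, your computation of the reversing measure double-counts the branching factor: detailed balance along a root-to-leaf path gives per-vertex mass $\lambda^{-h(v)}$ (up to boundary corrections), so the mass of level $j$ is $k^j\lambda^{-j}=(k/\lambda)^j$, not $(k^2/\lambda)^j$; consequently $p(n)\asymp(\lambda/k)^n$ for $\lambda<k$ \emph{is} the right answer (cf.\ Lemma~\ref{lem:pn-karytree-bound}), and the three regimes you describe around $\lambda=k^2$ do not arise. Second, the theorem covers all $\lambda>0$, and the LHaGG coupling and the bounds on $p(n)$ are only valid for $\lambda<k$; the case $\lambda\ge k$ (where the walk is always recurrent) needs a separate, if short, argument that your plan does not mention.
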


 For instance,  Theorem~\ref{thm:karytree} implies the following  corollary, 
   about a simple random walk on an infinitely growing $k$-ary tree. 
\begin{corollary}\label{cor:simple-karytree}
 Let $\lambda =1$, i.e., every $P_n$ denotes a {\em simple} random walk on the complete $k$-ary tree $T_n$. 
  If $\mathfrak{d}(n) = \Omega(k^n/(n \log n))$ then $r$ is recurrent by $\mathcal{D}_{\rm T}$. 
  If $\mathfrak{d}(1) < \infty$ and $\mathfrak{d}(n) = \Order(k^n/(n (\log n)^{1+\epsilon}))$ for $n\geq 2$ with a constant $\epsilon > 0$    then $r$ is transient by $\mathcal{D}_{\rm T}$. 
\end{corollary}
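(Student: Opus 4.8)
The plan is to read off both assertions directly from Theorem~\ref{thm:karytree} with $\lambda = 1$. With this choice $(\lambda/k)^n = k^{-n}$, so the theorem says that $r$ is recurrent by $\mathcal{D}_{\rm T}$ if $\sum_{n=1}^{\infty}\mathfrak{d}(n)k^{-n} = \infty$ and transient otherwise. Thus the whole corollary reduces to a routine comparison of the terms $\mathfrak{d}(n)k^{-n}$ with the Bertrand-type series $\sum 1/(n\log n)$ and $\sum 1/(n(\log n)^{1+\epsilon})$, and no further probabilistic input is needed.

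For the recurrence part I would unwind the hypothesis $\mathfrak{d}(n) = \Omega(k^n/(n\log n))$: there exist a constant $c>0$ and an index $n_0$ with $\mathfrak{d}(n) \geq c\,k^n/(n\log n)$ for all $n \geq n_0$, hence $\mathfrak{d}(n)k^{-n} \geq c/(n\log n)$ for $n \geq n_0$. Summing over $n \geq n_0$ and invoking the standard fact $\sum_{n\geq 2} 1/(n\log n) = \infty$ (integral test: $\int^{x} \mathrm{d}t/(t\log t)$ grows like $\log\log x$) gives $\sum_{n}\mathfrak{d}(n)k^{-n} = \infty$, so Theorem~\ref{thm:karytree} yields that $r$ is recurrent.

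For the transience part, $\mathfrak{d}(n) = \Order(k^n/(n(\log n)^{1+\epsilon}))$ for $n\geq 2$ provides a constant $C$ and an index $n_1 \geq 2$ with $\mathfrak{d}(n)k^{-n} \leq C/(n(\log n)^{1+\epsilon})$ for $n \geq n_1$. I would split $\sum_{n=1}^{\infty}\mathfrak{d}(n)k^{-n}$ into the finite prefix $\sum_{n=1}^{n_1-1}\mathfrak{d}(n)k^{-n}$, which is finite because $\mathfrak{d}(1)<\infty$ is assumed and each $\mathfrak{d}(n)$ is a finite nonnegative integer, plus the tail $\sum_{n\geq n_1}\mathfrak{d}(n)k^{-n} \leq C\sum_{n\geq n_1}1/(n(\log n)^{1+\epsilon}) < \infty$ (integral test again: $\int^{x}\mathrm{d}t/(t(\log t)^{1+\epsilon})$ converges for $\epsilon>0$). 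Hence $\sum_{n}\mathfrak{d}(n)k^{-n}<\infty$, and the ``otherwise'' clause of Theorem~\ref{thm:karytree} gives that $r$ is transient. Since every step is an elementary estimate, there is no real obstacle here; the only points needing a line of care are bookkeeping the $\Omega$/$\Order$ constants and threshold indices, and observing that the finite prefix of the series is harmless precisely because the duration function is finite-valued (which is why the transient statement explicitly assumes $\mathfrak{d}(1)<\infty$).
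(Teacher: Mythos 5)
Your proposal is correct and follows essentially the same route as the paper: substitute $\lambda=1$ into Theorem~\ref{thm:karytree} and compare $\sum \mathfrak{d}(n)k^{-n}$ with the Bertrand series $\sum 1/(n\log n)$ and $\sum 1/(n(\log n)^{1+\epsilon})$ via the integral test. The only (harmless) difference is that you track the $\Omega$/$\Order$ threshold indices explicitly, whereas the paper absorbs them into the constants.
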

\begin{proof}
Suppose $\mathfrak{d}(n) \geq c k^n/(n \log n)$ for some constant $c>0$. 
Then, 
 $\sum_{n=1}^{\infty} \mathfrak{d}(n) (\frac{1}{k})^n \geq \sum_{n=1}^{\infty} c \frac{k^n}{n \log n} (\frac{1}{k})^n = c \sum_{n=1}^{\infty} \frac{1}{n \log n} \geq c \int_2^\infty \frac{1}{n \log n} = c[\log \log n]_2^{\infty} = \infty$ 
, and Theorem~\ref{thm:karytree} implies that $r$ is recurrent.

Suppose $\mathfrak{d}(n) \leq c' k^n/(n (\log n)^{1+\epsilon})$for some constant $c'>0$.  
Then,  $\sum_{n=1}^{\infty} \mathfrak{d}(n) (\frac{1}{k})^n 
   \leq \mathfrak{d}(1)+ \sum_{n=2}^{\infty} c' \frac{k^n}{n (\log n)^{1+\epsilon}} (\frac{1}{k})^n 
   \leq \mathfrak{d}(1) + c' \frac{1}{2 (\log 2)^{1+\epsilon}} + c'\int_2^{\infty} \frac{1}{x (\log x)^{1+\epsilon}} {\rm d}x
   = \mathfrak{d}(1) + c' \frac{1}{2 (\log 2)^{1+\epsilon}} + c'k \left[-\frac{1}{\epsilon (\log x)^{\epsilon}}\right]_2^{\infty}
   < \infty$,
and Theorem~\ref{thm:karytree} implies that $r$ is transient.
\end{proof}

\subsection{Proof of Theorem~\ref{thm:karytree}}\label{apx:proof-karytree}
 We prove Theorem~\ref{thm:karytree}. As a preliminary step, we remark on the following two facts. 
\begin{lemma}\label{lem:pn-karytree}
(i) Every $P_n$ ($n=1,2,\ldots$) is reversible: precisely, let 
\begin{align}
 \phi(v) = \begin{cases}
 \frac{k}{\lambda+k} & \text{if $h(v) =0$ (i.e., $v=r$),} \\
 \lambda^{-h(v)} & \text{if $0 < h(v) <n$,} \\
 \frac{\lambda}{\lambda+k}\lambda^{-n} & \text{if $h(v) = n$ (i.e., $v$ is a leaf). }
 \end{cases}
\label{eq:reversible-karytree}
\end{align}
Then, the detailed balance equation 
\begin{align*}
 \phi(u)P_n(u,v) = \phi(v)P_n(v,u)
\end{align*}
 holds for any $u,v \in V_n$. 
(ii) Every $P_n$ is irreducible and ${\rm period}(P_n) = 2$. 
 Thus the even-time stationary distribution of $P_n$ is 
\begin{align}
\mathring{\pi}_n(v) 
= \frac{\phi(v)}{\sum_{u \in U_n} \phi(u)}
\label{eq:stationary-karytree}
\end{align}
for any $v \in U_n$. 
\end{lemma}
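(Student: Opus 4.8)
The plan is to verify the detailed balance equations (i) by a case analysis over the pairs $\{u,v\}\in E_n$, and then to deduce (ii) by a short structural argument. For part (i), the only nontrivial pairs are adjacent ones $u,v$ with $v$ a child of $u$ (otherwise both $P_n(u,v)$ and $P_n(v,u)$ vanish and the equation is trivial). I would split into three subcases according to the height of the parent $u$: $h(u)=0$ (so $u=r$), $0<h(u)<n-1$ (both $u$ and $v$ are internal non-root nodes), and $h(u)=n-1$ (so $v$ is a leaf). In each subcase I substitute the defining values from \eqref{eq:comp-tree0} and the weights $\phi$ from \eqref{eq:reversible-karytree} and check the identity $\phi(u)P_n(u,v)=\phi(v)P_n(v,u)$. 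For instance, when $u=r$ and $v$ is a child, the left side is $\frac{k}{\lambda+k}\cdot\frac1k=\frac{1}{\lambda+k}$ and the right side is $\lambda^{-1}\cdot\frac{\lambda}{\lambda+k}=\frac{1}{\lambda+k}$; the internal-internal case gives $\lambda^{-h(u)}\cdot\frac{1}{\lambda+k}=\lambda^{-h(u)-1}\cdot\frac{\lambda}{\lambda+k}$, which holds since $h(v)=h(u)+1$; and the leaf case gives $\lambda^{-(n-1)}\cdot\frac{1}{\lambda+k}=\frac{\lambda}{\lambda+k}\lambda^{-n}\cdot 1$, again an identity. This is entirely routine bookkeeping.

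For part (ii), irreducibility is clear because the tree is connected and every edge carries positive transition probability in both directions, so some power of $P_n$ connects any ordered pair of vertices. For the period, note that every edge of $\tree_n$ joins a vertex of even height to one of odd height, so the underlying graph is bipartite with parts $U_n$ (even heights) and $\overline{U}_n$ (odd heights), and there are no self-loops; hence every closed walk has even length, giving ${\rm period}(P_n)\ge 2$, while $P_n^2(r,r)>0$ (the walk can step to a child and back) forces ${\rm period}(P_n)=2$ by the earlier Observation that a reversible chain has period at most $2$. Given reversibility and period $2$, Proposition~\ref{prop:reversible-monotone} and the limit-distribution proposition apply, and the unique even-time stationary distribution is the normalization of the reversing weight restricted to $U_n$: since $\phi P_n = \phi$ would fail to be a probability vector but $\phi$ restricted and renormalized over $U_n$ satisfies $\mathring{\pi}_n P_n^2=\mathring{\pi}_n$ (a standard consequence of detailed balance, as $P_n^2$ restricted to $U_n$ is reversible with respect to $\phi|_{U_n}$), formula \eqref{eq:stationary-karytree} follows, and uniqueness is guaranteed by the limit-distribution proposition.

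I would present (i) as the bulk of the proof and keep (ii) terse, citing the Observation, the limit-distribution Proposition, and the general fact that for a reversible $P$ the vector $\phi$ (suitably normalized on one side of the bipartition) is even-time stationary. The only place requiring a little care — the "main obstacle," such as it is — is getting the boundary weights right: the root and the leaves have the anomalous $\phi$-values $\frac{k}{\lambda+k}$ and $\frac{\lambda}{\lambda+k}\lambda^{-n}$ rather than the generic $\lambda^{-h(v)}$, precisely because the root has no parent (so its outgoing mass is $\frac1k$ per child, not $\frac{1}{\lambda+k}$) and leaves have no children (so they return to the parent with probability $1$). One must check that these two exceptional values are exactly what detailed balance forces at the $r$-to-child edges and the (height $n-1$)-to-leaf edges, which the computations above confirm; everything else is generic and uniform in $n$.
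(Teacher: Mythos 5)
Your proof is correct. The paper itself offers no proof of this lemma (it is stated as one of two facts "remarked" before the main argument), and your case-by-case verification of detailed balance at the root--child, internal--internal, and internal--leaf edges, followed by the bipartiteness argument for the period and the observation that $\phi$ restricted to $U_n$ is reversible for $P_n^2[U_n]$, is exactly the routine argument the authors are implicitly relying on. The only nitpick is the parenthetical "$\phi P_n=\phi$ would fail to be a probability vector": in fact $\phi P_n=\phi$ does hold (sum the detailed balance equations over $u$); $\phi$ merely is not normalized, and this does not affect your conclusion.
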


Let $p(n) = \mathring{\pi}_n(r) $, then 
\begin{align}
 p(n) 
 = \begin{cases} 
 \frac{\frac{k}{\lambda+k}}{\frac{k}{\lambda+k} + \sum_{i=1}^{\lfloor \frac{n}{2} \rfloor } \left(\frac{k}{\lambda}\right)^{2i}}
 & \text{if $n$ is odd,} \\
 \frac{\frac{k}{\lambda+k}}{\frac{k}{\lambda+k} + \sum_{i=1}^{\frac{n}{2} -1} \left(\frac{k}{\lambda}\right)^{2i}  
  + \frac{\lambda }{\lambda + k}\left(\frac{k}{\lambda}\right)^n} 
 & \text{if $n$ is even} \\
\end{cases}
\label{eq:pi-regular_tree}
\end{align}
 by \eqref{eq:reversible-karytree} and \eqref{eq:stationary-karytree}
 considering the fact $|\{ v \in V_n \mid h(v) =i \}| = k^i$ for $i=0,1,\ldots,n$. 

\begin{lemma}\label{lem:pn-karytree-bound}
If $\lambda < k$, then 
\begin{align}
\tfrac{k-\lambda}{k}  \left(\frac{\lambda}{k}\right)^{n+1} \leq p(n) \leq \left(\frac{\lambda}{k}\right)^{n-1}.
\label{eq:pn-karytree-bound}
\end{align}
\end{lemma}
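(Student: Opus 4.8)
\textbf{Proof proposal for Lemma~\ref{lem:pn-karytree-bound}.}

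The plan is to work directly from the closed form \eqref{eq:pi-regular_tree} for $p(n)=\mathring{\pi}_n(r)$, treating the two parities separately but in a uniform way. Write $q = k/\lambda > 1$ (since we assume $\lambda < k$); then $p(n)$ is the reciprocal of a quantity of the form $\frac{k}{\lambda+k} + S_n$, where $S_n$ is either $\sum_{i=1}^{\lfloor n/2\rfloor} q^{2i}$ (odd $n$) or $\sum_{i=1}^{n/2-1} q^{2i} + \frac{\lambda}{\lambda+k}q^n$ (even $n$). In both cases $S_n$ is (up to the constant $\frac{k}{\lambda+k}$) essentially a finite geometric sum whose dominant term is $q^{n-1}$ or $q^n$. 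So the upper bound on $p(n)$ comes from bounding $\frac{k}{\lambda+k}+S_n$ from below by its largest single term, and the lower bound on $p(n)$ comes from bounding the whole sum from above by a geometric series.

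First I would establish the upper bound $p(n)\le (\lambda/k)^{n-1}=q^{-(n-1)}$. For odd $n$, the largest term in $S_n$ is $q^{2\lfloor n/2\rfloor}=q^{n-1}$, so $\frac{k}{\lambda+k}+S_n \ge q^{n-1}$, giving $p(n)\le q^{-(n-1)}$. For even $n$, the terms present are $q^2,\dots,q^{n-2}$ and $\frac{\lambda}{\lambda+k}q^n$; here one should check that $\frac{\lambda}{\lambda+k}q^n \ge q^{n-1}$, i.e.\ $\frac{\lambda}{\lambda+k}\cdot q \ge 1$, i.e.\ $\frac{\lambda}{\lambda+k}\cdot\frac{k}{\lambda}=\frac{k}{\lambda+k}\ge 1$ — wait, that is false. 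So for even $n$ I would instead use the term $q^{n-2}$ together with the constant, or more cleanly bound $\frac{k}{\lambda+k}+S_n \ge q^{n-2}+\frac{\lambda}{\lambda+k}q^{n} \ge q^{n-2}(1+\frac{\lambda}{\lambda+k}q^2)$ and verify this product is at least $q^{n-1}$; since $q>1$ this reduces to a clean inequality in $q,\lambda,k$ that holds because $\frac{\lambda}{\lambda+k}q^2 = \frac{\lambda}{\lambda+k}\cdot\frac{k^2}{\lambda^2}=\frac{k^2}{\lambda(\lambda+k)} \ge q-1$ can be checked by cross-multiplying. This bookkeeping over the even case is the one place that needs a little care.

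For the lower bound $p(n)\ge \frac{k-\lambda}{k}(\lambda/k)^{n+1}$, equivalently an upper bound $\frac{k}{\lambda+k}+S_n \le \frac{k}{k-\lambda}q^{n+1}$, I would bound $S_n$ by extending it to the full geometric series: $\sum_{i\ge 1}q^{2i}$ diverges, so that crude step fails, and instead I use that the finite sum $\sum_{i=1}^{m}q^{2i}=q^2\frac{q^{2m}-1}{q^2-1}\le \frac{q^{2m+2}}{q^2-1}\le \frac{q^{2m+2}}{q-1}\cdot\frac{1}{q+1}$, and similarly handle the extra term $\frac{\lambda}{\lambda+k}q^n \le q^n$ in the even case; adding the constant $\frac{k}{\lambda+k}\le 1\le q^{n+1}$ and collecting the (finitely many, $q$-dependent) constants gives a bound of the form $C\cdot q^{n+1}$. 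The main obstacle is simply getting the constant $C$ down to the stated $\frac{k}{k-\lambda}$: note $q^2-1=(k^2-\lambda^2)/\lambda^2$ and $q+1=(k+\lambda)/\lambda$, so $1/(q^2-1)=\lambda^2/((k-\lambda)(k+\lambda))$, and after multiplying by the $q^2$ prefactor and regrouping one should land exactly on $\frac{k-\lambda}{k}$ in the denominator; I expect the cleanest route is to note $S_n \le \frac{q^{n+1}}{q-1}\cdot(\text{something}\le 1)$ in the odd case — indeed $\sum_{i=1}^{\lfloor n/2\rfloor}q^{2i}\le q^{n-1}\sum_{j\ge0}q^{-2j}=q^{n-1}\frac{q^2}{q^2-1}=\frac{q^{n+1}}{q^2-1}$ — so $\frac{k}{\lambda+k}+S_n \le q^{n+1}\big(\frac{1}{q^{n+1}}+\frac{1}{q^2-1}\big)\le q^{n+1}\cdot\frac{q^2}{q^2-1}$ for $n$ large, and $\frac{q^2}{q^2-1}=\frac{k^2}{k^2-\lambda^2}\le\frac{k}{k-\lambda}$ since $\frac{k}{k+\lambda}\le 1$; the even case is analogous after absorbing the $\frac{\lambda}{\lambda+k}q^n$ term, and one checks the claimed inequality also survives for small $n$ by the constant term $\frac{k}{\lambda+k}$ being harmless. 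Wrapping up, taking reciprocals throughout converts these two estimates into \eqref{eq:pn-karytree-bound}.
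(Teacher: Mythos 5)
Your overall strategy is exactly the paper's: estimate the closed form \eqref{eq:pi-regular_tree} directly, obtaining the upper bound on $p(n)$ by keeping only the dominant term of the denominator, and the lower bound by bounding the denominator with a geometric series. The odd-$n$ cases go through as you describe. The problems are concentrated in the even-$n$ cases, where two of the intermediate inequalities you write down are actually false for part of the parameter range, even though the lemma itself still holds there.

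For the lower bound with $n$ even, writing $q=k/\lambda>1$, you bound the denominator by $1+\frac{q^{n}}{q^{2}-1}+q^{n}$ and claim this is at most $q^{n+1}\cdot\frac{q^{2}}{q^{2}-1}=\frac{q^{n+3}}{q^{2}-1}$. Clearing denominators, that claim is equivalent to $q+1\le q^{n+2}$, which fails whenever $q$ is close to $1$ (i.e.\ $\lambda$ close to $k$) and $n$ is not large --- e.g.\ $q=1.1$ and $n=2$ or $4$ --- and the range of bad $n$ is unbounded as $q\downarrow 1$, so no uniform ``small $n$ by hand'' patch covers it. The target constant here has zero slack: dividing the numerator $\frac{k}{\lambda+k}$ by $\frac{q^{n+3}}{q^{2}-1}$ gives exactly $\frac{k-\lambda}{k}(\lambda/k)^{n+1}$, so you cannot afford the extra additive $\frac{q^{2}}{q^{2}-1}$ that your term-by-term estimate introduces. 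The fix, which is what the paper does, is to note that $1+\sum_{i=1}^{n/2-1}q^{2i}+q^{n}$ is exactly the geometric sum $\frac{q^{n+2}-1}{q^{2}-1}\le\frac{q^{n+2}}{q^{2}-1}$, a full factor of $q$ better than needed. Similarly, your upper-bound step $\frac{k}{\lambda+k}+S_n\ge q^{n-2}+\frac{\lambda}{\lambda+k}q^{n}$ presupposes that $q^{n-2}$ occurs in the sum, which requires $n\ge4$; at $n=2$ the sum is empty and the displayed inequality is false (the left side equals $q$ exactly, the right side exceeds $q$), so $n=2$ needs a separate check --- or, more simply, normalize the whole fraction by $\frac{k}{\lambda+k}$ as the paper does, after which the last term of the denominator alone equals $\frac{\lambda}{k}q^{n}=q^{n-1}$ and no split on the size of $n$ is required.
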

\begin{proof}
Firstly, we prove the upper bound of \eqref{eq:pn-karytree-bound}. 
When $n$ is odd,  
\begin{align*} 
p(n) 
= \frac{\frac{k}{\lambda+k}}{\frac{k}{\lambda+k} + \sum_{i=1}^{\lfloor \frac{n}{2} \rfloor } \left(\frac{k}{\lambda}\right)^{2i}}
\leq  \frac{1}{\frac{k}{\lambda+k} + \sum_{i=1}^{\lfloor \frac{n}{2} \rfloor } \left(\frac{k}{\lambda}\right)^{2i}}
\leq  \frac{1}{\left(\frac{k}{\lambda}\right)^{2\lfloor \frac{n}{2} \rfloor}}
=  \left(\frac{\lambda}{k}\right)^{2\lfloor \frac{n}{2} \rfloor}
=  \left(\frac{\lambda}{k}\right)^{n-1}
\end{align*}
and we obtain the upper bound in the case. 
When $n$ is even, similarly,  
\begin{align*}
p(n)
&=
 \frac{\frac{k}{\lambda+k}}{\frac{k}{\lambda+k} + \sum_{i=1}^{\frac{n}{2} -1} \left(\frac{k}{\lambda}\right)^{2i}  
  + \frac{\lambda }{\lambda + k}\left(\frac{k}{\lambda}\right)^n} 
=
 \frac{1}{1 + \frac{\lambda+k}{k}\sum_{i=1}^{\frac{n}{2} -1} \left(\frac{k}{\lambda}\right)^{2i}  
  + \left(\frac{k}{\lambda}\right)^n} 
\\&\leq
 \frac{1}{\left(\frac{k}{\lambda}\right)^n} 
\leq  \left(\frac{\lambda}{k}\right)^{n-1}
\end{align*}
and we obtain the upper bound. 
Then, we prove the lower bound of \eqref{eq:pn-karytree-bound}. 
When $n$ is odd,  
\begin{align*} 
p(n) 
&= \frac{\frac{k}{\lambda+k}}{\frac{k}{\lambda+k} + \sum_{i=1}^{\lfloor \frac{n}{2} \rfloor } \left(\frac{k}{\lambda}\right)^{2i}}
\geq \frac{\frac{k}{\lambda+k}}{1+ \sum_{i=1}^{\lfloor \frac{n}{2} \rfloor } \left(\frac{k}{\lambda}\right)^{2i}}
=
 \frac{\frac{k}{\lambda+k}}{
 \frac{\left(\left(\frac{k}{\lambda}\right)^2\right)^{\lfloor \frac{n}{2} \rfloor+1}-1}{\left(\frac{k}{\lambda}\right)^2-1}} 
=
 \frac{\frac{k}{\lambda+k}}{
 \frac{\left(\frac{k}{\lambda}\right)^{n+1}-1}{\left(\frac{k}{\lambda}\right)^2-1}} 
 \end{align*}
 and we obtain the lower bound in the case. 
 When $n$ is even, 
\begin{align*}
p(n)
=
 \frac{\frac{k}{\lambda+k}}{\frac{k}{\lambda+k} + \sum_{i=1}^{\frac{n}{2} -1} \left(\frac{k}{\lambda}\right)^{2i}  
  + \frac{\lambda }{\lambda + k}\left(\frac{k}{\lambda}\right)^n} 
\geq
 \frac{\frac{k}{\lambda+k}}{1 + \sum_{i=1}^{\frac{n}{2} -1} \left(\frac{k}{\lambda}\right)^{2i}  
  + \left(\frac{k}{\lambda}\right)^n} 
=
 \frac{\frac{k}{\lambda+k}}{
 \frac{\left(\frac{k}{\lambda}\right)^{n+2}-1}{\left(\frac{k}{\lambda}\right)^2-1}} 
\end{align*}
holds. In both cases, 
\begin{align*} 
p(n) 
&\geq
 \frac{\frac{k}{\lambda+k}}{
 \frac{\left(\frac{k}{\lambda}\right)^{n+2}-1}{\left(\frac{k}{\lambda}\right)^2-1}} 
= \tfrac{k}{\lambda+k} \left(\left(\tfrac{k}{\lambda}\right)^2-1 \right)
 \frac{1}{\left(\frac{k}{\lambda}\right)^{n+2}-1} 
\\&\geq \tfrac{k}{\lambda+k} \left(\left(\tfrac{k}{\lambda}\right)^2-1 \right)
 \frac{1}{\left(\frac{k}{\lambda}\right)^{n+2}} 
= \tfrac{k-\lambda}{k}  \left(\frac{\lambda}{k}\right)^{n+1}
 \end{align*}
holds and we obtain the lower bound. 
\end{proof}

The following lemma is a key of the proof of Theorem,~\ref{thm:karytree}. 
\begin{lemma}\label{lem:lhagg-karytree}
If $\lambda<k$ then 
$\mathcal{D}_{\rm T}$ is LHaGG. 
\end{lemma}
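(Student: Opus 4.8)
The plan is to show that $\mathcal{D}_{\rm T}$ is LHaGG by a direct coupling argument on two copies of the walk living on growing trees whose durations satisfy $\sum_{i=1}^n f(i) \le \sum_{i=1}^n g(i)$ for all $n$. Write $X_t$ for the walk under $\mathcal{D} = (f,G,P)$ and $Y_t$ for the walk under $\mathcal{D}' = (g,G,P)$, both started at the root $r$. Since all $P_n$ have period~2 and the tree is bipartite by height parity, it suffices to couple at even times; I will track the two walks through their heights and aim to maintain the invariant $h(X_t) \ge h(Y_t)$, which immediately gives $\Pr[X_t = r] \le \Pr[Y_t = r]$ and hence $R(t) \le R'(t)$, the definition of less-homesick. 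The point of the condition \eqref{eq:grow-faster} is that $X$ sits on a tree that is never shorter than the one $Y$ sits on, so the only place the two transition rules can differ is at a leaf: a vertex that is a leaf of $G(n')$ for the smaller tree may be an internal node of $G(n)$ for the larger one.

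The key structural observation is that \emph{on internal vertices the transition probabilities in \eqref{eq:comp-tree0} do not depend on the height $n$ of the tree at all} — they are $\frac{1}{\lambda+k}$ toward each child and $\frac{\lambda}{\lambda+k}$ toward the parent (and $\frac1k$ toward each child at the root). So the two walks can be driven by a common source of randomness that, at a vertex which is internal in both trees, moves both walks "the same way" (same child index, or both to the parent); under this coupling the height gap $h(X_t) - h(Y_t)$ is preserved whenever neither walk is at a leaf of its own tree. The only events that can change the gap are: (a) $Y$ is at a leaf of its (shorter) tree while $X$ is strictly higher up — then $Y$ is forced to its parent (height drops by~1) while $X$ makes a $\pm 1$ step, so the gap can only stay the same or increase; and (b) $X$ is at a leaf of its tree — but then, since $h(X_t) \ge h(Y_t)$ and a leaf sits at the maximum height of its tree, $Y$ must also be at the same height, which (as $X$'s tree is at least as tall) forces $Y$ to a leaf as well, so both are forced to their parents and the gap is unchanged. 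A small amount of care is needed at the root, where the branching probability is $\frac1k$ rather than $\frac{1}{\lambda+k}$: but the root is internal in both trees (as $k\ge 2$ and $n\ge 1$), so the rules again agree there. One should also note the walks may be on trees of different heights at a given real time $t$ because $f$ and $g$ differ; the invariant $h(X_t)\ge h(Y_t)$ is exactly what survives this, and it is initialized trivially since both start at $r$ with $h=0$.

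I would organize the write-up as: (1) reduce to comparing $\mathcal{D}=(f,G,P)$ and $\mathcal{D}'=(g,G,P)$ with $\sum_{i\le n} f(i)\le\sum_{i\le n} g(i)$, and to even times; (2) record that at any vertex internal in its current tree the transition law is independent of the tree's height; (3) define the Markovian coupling of $(X_t,Y_t)$ via shared child-labels and a shared parent/child coin at internal vertices, with the forced moves at leaves; (4) prove by induction on even $t$ that $h(X_t)\ge h(Y_t)$, checking the three cases above, and that the height of $X$'s current tree is $\ge$ the height of $Y$'s current tree (needed to make case (b) go through) — this follows from $\sum_{i\le n} f(i)\le\sum_{i\le n} g(i)$ since at real time $t$ the walk under $f$ is in a phase with a larger-or-equal index; (5) conclude $\Pr[X_t=r]\le\Pr[Y_t=r]$. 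The main obstacle is case~(b) and, relatedly, keeping the bookkeeping straight between "phase index" and "tree height" for the two walks at a common real time: one must verify that whenever $X_t$ is at a leaf of its own tree $G(n)$, the companion $Y_t$ is at a vertex of the same height in a tree $G(n')$ with $n'\le n$, so $Y_t$ is also a leaf and is likewise forced upward; the hypothesis $\lambda<k$ enters only through Lemma~\ref{lem:pn-karytree-bound} (ensuring $p(n)>0$ and the bounds used downstream) and not in the coupling itself, so I would remark that the LHaGG property in fact holds for all $\lambda>0$, with the $\lambda<k$ restriction present only to match the rest of Section~\ref{sec:karytree}.
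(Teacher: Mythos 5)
Your proposal matches the paper's proof essentially verbatim: a coupling of the height processes maintaining the invariant $h(X_t)\ge h(Y_t)$ by induction, using that the transition probabilities at internal vertices (including the root) do not depend on the tree's current height, that the faster-growing tree is always at least as tall, and the same case split (positive gap handled by parity, zero gap split into root/internal/both-leaves/only-$Y$-a-leaf). The one spot to tighten is your case (b): when $X_t$ is at a leaf you can only conclude $h(Y_t)=h(X_t)$ (and hence that $Y_t$ is also a leaf) in the zero-gap sub-case, while for gap $\ge 2$ the forced downward step of $X$ is absorbed by the parity argument rather than leaving the gap "unchanged"; your closing observation that the coupling itself never uses $\lambda<k$ is correct.
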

\begin{proof}
 Let $f$ and $g$ satisfy $\sum_{i=1}^n f(i) \leq \sum_{i=1}^n g(i)$ for any $n=1,2,\ldots$, and 
  let $X_t$ and $Y_t$ ($t=0,1,2,\ldots$) respectively follow $(f,G,P)$ and $(g,G,P)$, 
  i.e., the tree of $(f,G,P)$ grows faster than $(g,G,P)$. 
 Let $X_0 =Y_0 =r$, and 
  we prove $\Pr[X_t = r] \leq \Pr[Y_t = r]$ for any $t=1,2,\ldots$ (recall Section \ref{sec:lhagg} for LHaGG).

 We construct a coupling of $\Vec{X} = \{X_t\}_{t \geq 0}$ and  $\Vec{Y} = \{Y_t\}_{t \geq 0}$ 
  such that 
   $h(X_t) \geq h(Y_t)$ holds for any $t=1,2,\ldots$. % using the coupling technique.  
 The proof is an induction concerning $t$. 
 Clearly, $h(X_0) = h(Y_0)=0$. 
 Inductively assuming $h(X_t) \geq h(Y_t)$, we prove  $h(X_{t+1}) \geq h(Y_{t+1})$.  
 If $h(X_t) > h(Y_t)$ then $h(X_t) \geq h(Y_t)-2$ 
  since every $P_n$ is ${\rm period}(P_n)=2$ for $n=1,2,\ldots$. 
It is easy to see that 
$h(X_{t+1}) \geq h(X_t)-1 \geq h(Y_t)+1 \geq h(Y_{t+1})$, and we obtain $h(X_{t+1}) \geq h(Y_{t+1})$ in the case. 

Suppose $h(X_t)=h(Y_t)$. We consider four cases: 
(i) $X_t=Y_t=r$,  
(ii) both $X_t$ and $Y_t$ are internal nodes, 
(iii) both $X_t$ and $Y_t$ are leaves, i.e., both trees of $(f,G,P)$ and$(g,G,P)$ take the same height at time $t$, 
(iv) $X_t$ is not a leaf but $Y_t$ is a leaf, i.e., the tree of $(f,G,P)$ is higher than that of $(g,G,P)$ at time $t$. 
%%%
 In the case (i), 
\begin{align*}
\Pr[h(X_{t+1}) = h(X_t)+1] &= \Pr[h(Y_{t+1}) = h(Y_t)+1] = 1
\end{align*}
 hold, and hence we can couple them to satisfy $h(X_{t+1}) = h(Y_{t+1})$. % and hence $h(X') \geq h(Y')$. 
 In the case (ii), since both $X_t$ and $Y_t$ are internal nodes, 
\begin{align*}
\Pr[h(X_{t+1}) = h(X_t)-1] &= \Pr[h(Y_{t+1}) = h(Y_t)-1] = \frac{\lambda}{k+\lambda} 
\end{align*}
and
\begin{align*}
\Pr[h(X_{t+1}) = h(X_t)+1] &= \Pr[h(Y_{t+1}) = h(Y_t)+1] = \frac{k}{k+\lambda}
\end{align*}
 hold, and hence we can couple them to satisfy $h(X_{t+1}) = h(Y_{t+1})$. % and hence $h(X') \geq h(Y')$. 
%%%%
 In the case (iii), since both $X_t$ and $Y_t$ are leaves, 
\begin{align*}
\Pr[h(X_{t+1}) = h(X_t)-1] &= \Pr[h(Y_{t+1}) = h(Y_t)-1] = 1
\end{align*}
 holds, and hence we can couple them to satisfy $h(X_{t+1}) = h(Y_{t+1})$. % and hence $h(X') \geq h(Y')$. 
 In the case (iv), 
  since $X_t$ is not a leaf but $Y_t$ is a leaf, 
\begin{align*}
\Pr[h(X_{t+1}) = h(X_t)-1] &= \frac{\lambda}{k+\lambda} \leq \Pr[h(Y_{t+1}) = h(Y_t)-1] = 1 
\end{align*}
 holds, and hence we can couple them to satisfy $h(X_{t+1}) \geq h(Y_{t+1})$. 

 Now we obtain a coupling of $\Vec{X} = \{X_t\}_{t \geq 0}$ and  $\Vec{Y} = \{Y_t\}_{t \geq 0}$ 
  such that $h(X_t) \geq h(Y_t)$ hold for any $t=1,2,\ldots$, 
  which implies that $h(Y_t)=0$ as long as $h(X_t) = 0$. 
 This means that $\Pr[X_t =r] \leq \Pr[Y_t=r]$ for any $t=1,2,\ldots$.  
 We obtain the claim. 
\end{proof}

 By Lemma~\ref{lem:rec} with Lemma~\ref{lem:lhagg-karytree}, 
   we get a sufficient condition for recurrence in Theorem~\ref{thm:karytree}.  
 On the other hand, we cannot directly apply Lemma~\ref{lem:trans} 
   to the sufficient condition for transient in Theorem~\ref{thm:karytree}, 
   because the ``mixing time'' of $P_n$ is proportional to $k^n$, see e.g.,~\cite{LevinPeres}. 
 Then, we estimate $R(t)$ by another random walk. 

%%%
Let $Z_t =h(X_t)$, where $X_t$ is a random walk on a growing $k$-ary tree ${\cal D}_{\rm T} = (\mathfrak{d},G,P)$. 
Then $Z_t$ is a RWoGG ${\cal D}_{\rm L} = (\mathfrak{d},L,Q)$
where $L(n) = (\{0,1,\ldots,n\},\{\{i,i+1\} \mid i=0,1,2,\ldots,n-1\})$ is a path graph of length $n$,  
and the transition probability matrix $Q(n) = Q_n$ is given by 
\begin{align*}
\begin{cases}
Q_n(0,1)  =1, \\
Q_n(i,i+1) =\frac{k}{\lambda+k} &\mbox{for $i=1,2,\ldots,n-1$,}\\
Q_n(i,i-1) =\frac{\lambda}{\lambda+k} &\mbox{for $i=1,2,\ldots,n-1$,}\\
Q_n(n,n-1)  =1.
\end{cases}
\end{align*}
 The following Lemmas~\ref{lem:path-Rt=R't} and \ref{lem:path-lhagg} are easy to observe. 
\begin{lemma}\label{lem:path-Rt=R't}
 Let $X_t$ (resp. $Z_t$) follow $\mathcal{D}_{\rm T}=(f,G,P)$ (resp. $\mathcal{D}_{\rm L}=(f,L,Q)$). 
 Let $R(t) =\Pr[X_t = r]$ (resp.\ $R'(t) =\Pr[Z_t = r]$), and let  
 $\mathring{\pi}_n$ (resp.  $\mathring{\pi}'_n$) denote the even-time stationary distribution of $P_n$ (resp. $Q_n$). 
Then, $R(t) = R'(t)$ for any $t=1,2,\ldots$, as well as $\mathring{\pi}_n(r) = \mathring{\pi}'_n(r)$.  
\end{lemma}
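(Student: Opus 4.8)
The plan is to recognize $Z_t=h(X_t)$ as a \emph{lumping} (state aggregation) of the chain $X_t$: the transition probabilities in~\eqref{eq:comp-tree0} depend on the current vertex $u$ only through its height $h(u)$ --- more precisely, only through whether $u$ is the root, an internal non-root vertex, or a leaf --- so the height process is itself Markovian and, during phase $n$, evolves exactly according to $Q_n$. First I would observe that the height function is consistent along the growing sequence: for $v\in V_n\subseteq V_{n+1}$ the height of $v$ computed in $\tree_n$ coincides with that computed in $\tree_{n+1}$, since $\tree_n$ is the subtree of $\tree_{n+1}$ induced by its first $n$ levels and the root is unchanged. Hence $Z_t=h(X_t)$ is well defined for all $t$, and because $\mathcal{D}_{\rm T}$ and $\mathcal{D}_{\rm L}$ share the same duration $f$, the phase boundaries $T_n$ agree for both, so $Z_t$ is a RWoGG whose phases are governed by $f$ on the path graphs $L(n)$.

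Next I would verify lumpability by inspecting the three vertex types. If $h(X_t)=0$, i.e.\ $X_t=r$, then $X_{t+1}$ is a child of $r$, so $h(X_{t+1})=1$ with probability $1=Q_n(0,1)$. If $1\le h(X_t)<n$, then $X_t$ is an internal non-root vertex, so $h(X_{t+1})=h(X_t)+1$ with probability $k\cdot\frac{1}{\lambda+k}=\frac{k}{\lambda+k}=Q_n(h(X_t),h(X_t)+1)$ (summing over the $k$ children) and $h(X_{t+1})=h(X_t)-1$ with probability $\frac{\lambda}{\lambda+k}=Q_n(h(X_t),h(X_t)-1)$. If $h(X_t)=n$, then $X_t$ is a leaf and $h(X_{t+1})=n-1$ with probability $1=Q_n(n,n-1)$. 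In every case the conditional law of $h(X_{t+1})$ given $X_t$ depends only on $h(X_t)$ and equals the corresponding row of $Q_n$; therefore $Z_t$ follows $\mathcal{D}_{\rm L}=(f,L,Q)$ with $Z_0=h(r)=0$. Identifying the state $0$ of $L(n)$ with $r$, we have $\{X_t=r\}=\{h(X_t)=0\}=\{Z_t=r\}$, whence $R(t)=\Pr[X_t=r]=\Pr[Z_t=r]=R'(t)$ for all $t=1,2,\ldots$.

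For the even-time stationary distributions, I would push $\mathring{\pi}_n$ forward along the lumping map: set $\widehat{\pi}(i)=\sum_{v\in V_n:\,h(v)=i}\mathring{\pi}_n(v)$ for $i=0,1,\ldots,n$. Since lumping intertwines $P_n$ with $Q_n$ (and hence $P_n^2$ with $Q_n^2$), from $\mathring{\pi}_n P_n^2=\mathring{\pi}_n$ we get $\widehat{\pi}\,Q_n^2=\widehat{\pi}$; moreover $\widehat{\pi}$ is supported on the even heights and $Q_n$ has period $2$, so $\widehat{\pi}$ is an even-time stationary distribution of $Q_n$. By the uniqueness of the even-time stationary distribution (the ``limit distribution'' proposition of Section~\ref{sec:term2}), $\widehat{\pi}=\mathring{\pi}'_n$. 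Evaluating at $i=0$ and using that $r$ is the unique vertex of height $0$ yields $\mathring{\pi}'_n(r)=\widehat{\pi}(0)=\mathring{\pi}_n(r)$.

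The argument is essentially bookkeeping; the only point needing a moment's care is checking that lumping is compatible with the graph-growing dynamics --- namely, that a leaf of $\tree_n$ (height $n$) which becomes an internal vertex of $\tree_{n+1}$ retains the same height, so that $Z_t$ does not jump across a phase boundary --- together with the fact that both processes change phase at the same times because they use the same duration $f$. Beyond verifying this consistency there is no real obstacle.
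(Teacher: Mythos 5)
Your proof is correct and follows exactly the route the paper intends: the paper introduces $Z_t=h(X_t)$ immediately before the lemma and declares the statement ``easy to observe,'' the implicit argument being precisely the lumping of the tree walk onto its height process that you spell out. Your verification of the three vertex types against the rows of $Q_n$, the consistency of heights across phases, and the pushforward-plus-uniqueness argument for the even-time stationary distributions are all sound and simply make the paper's omitted details explicit.
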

\begin{lemma}\label{lem:path-lhagg}
If $\lambda<k$ then 
${\cal D}_{\rm L}$ is LHaGG. 
\end{lemma}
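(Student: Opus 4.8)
The plan is to mimic the coupling argument of Lemma~\ref{lem:lhagg-karytree}, but directly on the path graphs $L(n)$ rather than on the trees $\tree_n$. Concretely, let $f$ and $g$ satisfy $\sum_{i=1}^n f(i) \leq \sum_{i=1}^n g(i)$ for all $n$, let $Z_t$ follow $({\cal D}_{\rm L}) = (f,L,Q)$ and $W_t$ follow $(g,L,Q)$, both started at $0$. I would construct a coupling of $\Vec{Z}$ and $\Vec{W}$ maintaining the invariant $Z_t \geq W_t$ for all $t$, which immediately gives $\Pr[Z_t = 0] \leq \Pr[W_t = 0]$, i.e. the required less-homesick inequality $R(t) \leq R'(t)$ for the faster-growing instance.

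The induction on $t$ would proceed exactly along the case split used in Lemma~\ref{lem:lhagg-karytree}. If $Z_t > W_t$, then since both chains have period $2$ we have $Z_t \geq W_t + 2$, so $Z_{t+1} \geq Z_t - 1 \geq W_t + 1 \geq W_{t+1}$ regardless of the moves. If $Z_t = W_t$, split into: both at $0$ (both deterministically move to $1$); both strictly-internal, i.e. $1 \leq Z_t = W_t \leq (\text{smaller height})-1$ (both move up with probability $k/(\lambda+k)$ and down with probability $\lambda/(\lambda+k)$, so couple the moves identically); both at the leaf level, meaning $Z_t = W_t$ equals the current height of both path graphs (both deterministically step down, couple identically); and finally $Z_t = W_t$ equals the current height of $L$ in the $W$-instance but is still strictly interior for the $Z$-instance (the $W$-chain steps down with probability $1$, the $Z$-chain steps down only with probability $\lambda/(\lambda+k) \leq 1$, so couple so that $Z_{t+1} \geq W_{t+1}$; this is the only case where the gap can open up). In every case the invariant is preserved, completing the induction.

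The one point requiring a word of care — and the only place the hypothesis $\lambda < k$ enters, if at all — is confirming that $Q_n$ genuinely has period $2$ (so the "$Z_t > W_t \Rightarrow Z_t \geq W_t+2$" step is valid) and that the transition probabilities on the interior are exactly as displayed; both are immediate from the definition of $Q_n$, since $L(n)$ is a path with no self-loops. In fact the coupling itself does not use $\lambda < k$, so the statement could be phrased for all $\lambda > 0$; I would just keep the hypothesis as stated since $\lambda < k$ is the regime of interest (and is needed downstream when combining with Lemma~\ref{lem:pn-karytree-bound}). I do not expect any real obstacle here: the argument is a direct and slightly simpler transcription of the tree coupling, with "leaf of the tree" replaced by "current right endpoint of the path," and the height process $h(X_t)$ already shown to be exactly this walk in Lemma~\ref{lem:path-Rt=R't}.
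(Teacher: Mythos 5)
Your proposal is correct and is essentially the argument the paper intends: the paper dismisses this lemma as ``easy to observe,'' and the evident proof is precisely your coupling on the path maintaining $Z_t \geq W_t$, which is a direct transcription of the coupling in Lemma~\ref{lem:lhagg-karytree} with ``leaf'' replaced by ``right endpoint.'' Your case analysis is exhaustive and your parity observation (that $Z_t > W_t$ forces $Z_t \geq W_t+2$) is stated correctly --- indeed more carefully than the corresponding line in the paper's tree proof --- and you are also right that the hypothesis $\lambda < k$ is not used by the coupling itself.
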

The following lemma about the mixing time of $Q_n$ is easily obtained 
  by a standard coupling argument for the mixing time, and we here omit the proof. 
\begin{lemma}\label{lem:path-mixing}
Let 
 $\mathring{\tau}'_n(\epsilon)$ denote the even mixing-time of $Q_n$ then 
 $\mathring{\tau}'_n(\epsilon) \leq n^2 \log \epsilon^{-1}$. 
\end{lemma}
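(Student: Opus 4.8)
The plan is to bound the even mixing-time of $Q_n$ by the standard coupling method for mixing times (see \cite{LevinPeres}): we construct a coupling of two copies of the chain $Q_n^2[U]$ (or, more conveniently, of two copies of the lazy-free walk on $\{0,1,\ldots,n\}$ driven by $Q_n$) and show that the coupling time has expectation $\Order(n^2)$ uniformly in the starting pair, and then invoke the inequality $\dtv \leq \Prob[\text{not yet coupled}]$ together with a submultiplicativity/Markov step to upgrade an $\Order(n^2)$ expected coupling time into a mixing bound of $\Order(n^2 \log \epsilon^{-1})$.

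Concretely, first I would fix two even-time starting points $a, b \in U$ and run $Y_t, Y'_t$ according to $Q_n$ simultaneously. The natural coupling is the \emph{reflection/mirror} coupling on the path: at each step let $Y_t$ move according to its own transition, and let $Y'_t$ move so that the two walkers take ``the same'' biased step when they are on the same side and mirror moves are used to make them meet; since both chains obey the same birth–death rule $Q_n(i,i+1)=k/(\lambda+k)$, $Q_n(i,i-1)=\lambda/(\lambda+k)$ on the interior with reflecting ends at $0$ and $n$, the difference $D_t = Y_t - Y'_t$ can be made to be a martingale-like process (a lazy simple random walk on the even integers, absorbed at $0$, with reflection corrections at the boundary) whose absolute value is non-increasing under the coupling and decreases by $2$ with probability bounded below by a constant each step it is nonzero. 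Since $|D_0| \le n$ and each "active" step kills $2$ units with constant probability, the expected time to reach $D=0$ is $\Order(n^2)$ by a standard gambler's-ruin / quadratic-drift estimate. Once $D_t = 0$, keep the walkers identical forever. This gives $\max_{a,b}\Exp[\tau_{\rm couple}] \le C n^2$ for an absolute constant $C$ (depending only on $\lambda,k$, which are constants to $n$).

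Next, convert the expected coupling time into a mixing bound: by Markov's inequality $\Prob[\tau_{\rm couple} > 2Cn^2] \le 1/2$, and restarting the coupling argument from whatever pair of states is occupied at time $2Cn^2$ gives, by the Markov property and induction, $\Prob[\tau_{\rm couple} > 2Cn^2 \cdot j] \le 2^{-j}$. Taking $j = \lceil \log_2 \epsilon^{-1} \rceil$ yields $d_{\rm TV}(Q_n^{2t}[U](a,\cdot),\mathring{\pi}'_n) \le \Prob[\tau_{\rm couple} > t] \le \epsilon$ for $t \ge 2Cn^2 \log_2 \epsilon^{-1}$; since $\mathring\tau'_n(\epsilon)$ is twice the mixing time of $P^2[U]=Q_n^2[U]$ (as noted just after the definition of the even mixing-time in Section~\ref{sec:term2}), we obtain $\mathring\tau'_n(\epsilon) = \Order(n^2 \log\epsilon^{-1})$, and absorbing the constant (or re-running the coupling with a slightly more careful per-step success-probability estimate) gives the clean bound $\mathring\tau'_n(\epsilon) \le n^2 \log \epsilon^{-1}$ as stated.

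The main obstacle is the boundary behaviour: the reflecting endpoints $0$ and $n$ (where the transition probability is $1$, not the interior birth–death split) mean the naive "same increment" coupling can momentarily \emph{increase} $|D_t|$, so one has to design the coupling at the boundary carefully — the cleanest fix is to use a monotone coupling that preserves the order $Y_t \le Y'_t$ (possible here because the chain is a monotone birth–death chain) so that $D_t$ stays single-signed and the endpoints only ever help the walkers coalesce; then the coalescence time is dominated by the absorption time of a single biased walk on $\{0,\ldots,n\}$ started at distance $\le n$, which is $\Order(n^2)$. Verifying the constant is exactly $1$ (rather than some $C>1$) would require tuning the per-step estimates, but since the lemma is only used qualitatively — we just need $\mathring{\mathfrak t}(n) = \mathring\tau_n(p(n)) = \Order(n^2 \cdot n) = \Order(n^3)$, which is negligible against $k^n$ — a bound of the form $\Order(n^2\log\epsilon^{-1})$ already suffices, and this is why the paper says the proof is omitted.
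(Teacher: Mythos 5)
Your proposal is correct and is exactly the ``standard coupling argument'' that the paper invokes while omitting the proof: a monotone (shared-coin) coupling of two copies of $Q_n$ started at states of the same parity, whose coalescence time is bounded by a boundary hitting time of order $n^2$ (indeed $\Order(n)$ when $\lambda<k$, since $Y\le Y'$ forces coalescence by the time the lower walker hits $n$), followed by the usual Markov-inequality/restart boost to obtain the $\log\epsilon^{-1}$ factor. The only slip is the mid-proof claim that the gap shrinks by $2$ with constant probability at \emph{every} step --- under the shared-coin coupling the gap is unchanged in the interior and shrinks only at the reflecting endpoints --- but your final paragraph already identifies and repairs this, and the resulting $\Order(n^2\log\epsilon^{-1})$ bound (rather than the stated constant $1$) is all that is needed where the lemma is applied in Lemma~\ref{lem:path-trans}.
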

%By a standard coupling argument for the mixing time, it is easy to see that 
%$\mathring{\tau}'_n(\epsilon) \leq n^2 \log \epsilon^{-1}$ 
% where $n^2$ is the hitting time of unbiased random walk on the path of length $n$. 
%\end{proof}
Then, we can prove the condition for ${\cal D}_{\rm L}$ being transient from Lemma~\ref{lem:trans}. 
\begin{lemma}\label{lem:path-trans}
 If $\lambda < k$ and $\sum \mathfrak{d}(n) (\frac{\lambda}{k})^n < \infty$ then $0$ is transient by ${\cal D}_{\rm L}$. 
\end{lemma}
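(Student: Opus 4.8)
The plan is to invoke Lemma~\ref{lem:trans} for the RWoGG ${\cal D}_{\rm L} = (\mathfrak{d}, L, Q)$ rather than attempting to apply it directly to ${\cal D}_{\rm T}$. This is the crucial move: the mixing time of $P_n$ on the tree grows like $k^n$, so the series \eqref{eq:trans} would not obviously converge, whereas on the path graph $L(n)$ the even mixing-time is only polynomial in $n$ by Lemma~\ref{lem:path-mixing}. First I would check that ${\cal D}_{\rm L}$ satisfies the hypotheses of Lemma~\ref{lem:trans}: it is LHaGG by Lemma~\ref{lem:path-lhagg}, and each $Q_n$ is irreducible, reversible, and of period $2$, since $L(n)$ is a path, hence connected and bipartite, with no self-loops. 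By Lemma~\ref{lem:path-Rt=R't} the return probability at $0$ for ${\cal D}_{\rm L}$ equals the return probability at $r$ for ${\cal D}_{\rm T}$, and $\mathring{\pi}'_n(0) = \mathring{\pi}_n(r) = p(n)$, so the estimate $p(n) \le (\lambda/k)^{n-1}$ from Lemma~\ref{lem:pn-karytree-bound} is available (this is where $\lambda < k$ is used).

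Next I would bound the quantity $\mathring{\mathfrak{t}}(n) = \mathring{\tau}'_n(p(n))$ appearing in Lemma~\ref{lem:trans}. Lemma~\ref{lem:path-mixing} gives $\mathring{\tau}'_n(\epsilon) \le n^2 \log \epsilon^{-1}$, so combining with the upper bound $p(n) \le (\lambda/k)^{n-1}$ yields $\mathring{\mathfrak{t}}(n) \le n^2 \log\!\big((k/\lambda)^{n-1}\big) = (n-1)\,n^2 \log(k/\lambda) = \Order(n^3)$, since $k/\lambda > 1$ is a constant. This polynomial-in-$n$ bound, in place of the exponential tree mixing time, is exactly what makes the argument go through.

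Then I would verify convergence of the series in \eqref{eq:trans} for ${\cal D}_{\rm L}$. Using $\max\{\mathfrak{d}(n), \mathring{\mathfrak{t}}(n)\} \le \mathfrak{d}(n) + \mathring{\mathfrak{t}}(n)$ and $p(n-1) \le (\lambda/k)^{n-2} = (k/\lambda)^2 (\lambda/k)^n$, the $\mathfrak{d}$-part contributes $\sum_{n\ge 2} \mathfrak{d}(n)\, p(n-1) \le (k/\lambda)^2 \sum_{n\ge 2} \mathfrak{d}(n)(\lambda/k)^n < \infty$ by the hypothesis $\sum \mathfrak{d}(n)(\lambda/k)^n < \infty$ (which in particular forces $\mathfrak{d}(1) < \infty$), while the $\mathring{\mathfrak{t}}$-part contributes $\sum_{n\ge 2} \mathring{\mathfrak{t}}(n)\, p(n-1) = \Order\!\big(\sum_{n\ge 2} n^3 (\lambda/k)^{n-2}\big) < \infty$ because $\lambda/k \in (0,1)$. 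The leading term $\max\{\mathfrak{d}(1), \mathring{\mathfrak{t}}(1)\}$ is finite since $\mathfrak{d}(1) < \infty$ and $\mathring{\mathfrak{t}}(1)$ is the even mixing-time of a finite chain. Hence \eqref{eq:trans} holds, and Lemma~\ref{lem:trans} yields that $0$ is transient by ${\cal D}_{\rm L}$.

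The only genuinely delicate point is the second step: one must know that the even mixing-time of the path chain $Q_n$ is polynomial in $n$ (Lemma~\ref{lem:path-mixing}), not exponential, so that $\mathring{\mathfrak{t}}(n)\,p(n-1)$ is dominated by a convergent geometric series; after that, everything reduces to bookkeeping with the geometric bounds on $p(n)$ and the hypothesis on $\mathfrak{d}$.
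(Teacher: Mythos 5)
Your proposal is correct and follows essentially the same route as the paper: apply Lemma~\ref{lem:trans} to ${\cal D}_{\rm L}$ (not ${\cal D}_{\rm T}$), using Lemma~\ref{lem:path-lhagg} for LHaGG, Lemma~\ref{lem:path-mixing} together with $p(n)\leq(\lambda/k)^{n-1}$ to get $\mathring{\mathfrak{t}}(n)=\Order(n^3)$, and then the geometric decay of $p(n-1)$ plus the hypothesis on $\mathfrak{d}$ to verify \eqref{eq:trans}. Your bookkeeping of the exponents is in fact slightly more careful than the paper's.
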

\begin{proof}
Let $p(n) = \mathring{\pi}_n(r)$ and  $p'(n) = \mathring{\pi}'_n(r)$, 
 then $p(n) = p'(n)$ by Lemma~\ref{lem:path-Rt=R't}. 
By Lemma~\ref{lem:path-mixing}, 
$\mathring{\mathfrak{t}'}(n) = \mathring{\mathfrak{\tau}'}_n (p(n-1)) \leq n^2 \log (p(n-1)) \leq n^2 \log \!\left((\frac{\lambda}{k})^n\right) \leq c' n^3$, and hence 
$\sum_{n=1}^{\infty} \mathring{\mathfrak{t}'}(n) p(n-1) \leq  \sum_{n=1}^{\infty} n^3 c' (\frac{\lambda}{k})^{n-1} < \infty$. 
If $\sum_{n=1}^{\infty} \mathfrak{d}(n) (\frac{\lambda}{k})^n < \infty$, 
 then $\sum_{n=1}^{\infty} \max\{ \mathfrak{d}(n),  \mathring{\mathfrak{t}'}(n)\} p(n-1) \leq \sum_{n=1}^{\infty} (\mathfrak{d}(n) + \mathring{\mathfrak{t}'}(n)) (\frac{\lambda}{k})^{n-1} < \infty$, 
 which implies $\sum_{t=1}^{\infty} R'(t) < \infty$ by Lemma~\ref{lem:trans} with Lemma~\ref{lem:path-lhagg}. 
\end{proof}
Now, we are ready to prove  Theorem~\ref{thm:karytree}. 
\begin{proof}[Proof of Theorem~\ref{thm:karytree}]

First, we consider the (interesting) case  $\lambda < k$. 

(Recurrent)
Assuming $\sum_{n=1}^{\infty} \mathfrak{d}(n) (\frac{\lambda}{k})^n = \infty$, 
 we prove $\sum_{n=1}^{\infty} (\mathfrak{d}(n)-1) p(n) = \infty$. 
 Notice that  $\sum_{n=1}^{\infty}  p(n) \leq c \sum_{n=1}^{\infty}  (\frac{\lambda}{k})^n = \frac{1}{1-\frac{\lambda}{k}} < \infty$.  Let $C = \sum_{n=1}^{\infty}  p(n)$, then 
 $\sum_{n=1}^{\infty} (\mathfrak{d}(n)-1) p(n) = \sum_{n=1}^{\infty} \mathfrak{d}(n) p(n) -C \geq \sum_{n=1}^{\infty} \mathfrak{d}(n) c (\frac{\lambda}{k})^n -C$, which is $\infty$ 
 from the assumption. 
 Thus, $r$ is recurrent by Lemma~\ref{lem:rec}.
 
(Transient)
By Lemma~\ref{lem:path-Rt=R't}, $\sum_{t=1}^{\infty} R(t) = \sum_{t=1}^{\infty} R'(t)$. 
 If  $\sum_{n=1}^{\infty} \mathfrak{d}(n) (\frac{\lambda}{k})^n < \infty$ 
 then $\sum_{t=1}^{\infty} R'(t) < \infty$ by Lemma~\ref{lem:path-trans}, 
 meaning that $r$ is transient. 

 In the case of  $\lambda \geq k$, 
    it is always recurrent. 
 The proof follows that of Lemma~\ref{lem:rec}, but here we omit the proof.  
\end{proof}

\section{Two Examples of Growing Grids}
 This section shows two examples of random walks on growing integer grids. 
 Section~\ref{sec:box} briefly shows the phase transition 
  between the recurrence and transience 
  of a simple random on a growing box of $\mathbb{Z}^d$ with a fixed $d$, 
 where essentially the same fact is given by Dembo et al.~\cite{DHS14J} 
  except for some small differences 
   e.g., the model of a growing graph, the period of the random walk, and the proof technique. 
Section~\ref{sec:cube} shows an interesting example, 
  where we show the phase transition 
%  between the recurrence and transience 
  of a simple random on the $\{0,1\}^n$ with an increasing $n$. 
\subsection{Random walk on growing box}\label{sec:box}
\subsubsection{Result summary}
 Let $B_n = (V_n,E_n)$ denote the integer grid graph of $\mathbb{Z}^d$ bounded by $-n$ and $n$, 
  which is formally given by 
\begin{align*}
V_n &= \prod_{i=1}^d\{-n,\ldots,-1,0,1,\ldots,n\} \\
E_n & \textstyle = \left\{ \{\Vec{x},\Vec{y}\} \in {V_n \choose 2} \mid \|\Vec{x}-\Vec{y}\|_1=1   \right\}
\end{align*}
 for $n=1,2,\ldots$. 
We consider a simple random walk with the reflection bound, that is formally given by 
\begin{equation} 
P_n(\Vec{u},\Vec{v}) = 
\begin{cases}
\frac{1}{2d} & \text{if $\|\Vec{u}-\Vec{v}\|_1=1$, $u_i \neq v_i$ and $|u_i| \neq n$} \\
\frac{1}{d} & \text{if $\|\Vec{u}-\Vec{v}\|_1=1$, $u_i \neq v_i$ and $|u_i| = n$,} \\
0 & \text{otherwise,}
\end{cases}
\end{equation}
   for $\Vec{u} = (u_1,\ldots,u_d)  \in V_n$ and $\Vec{v}=(v_1,\ldots,v_d) \in V_n$. 
The following fact is well known. 
\begin{proposition}[cf.~\cite{Durrett}]
$\Vec{0}$ is recurrent by $P_{\infty}$ when $d \leq 2$. 
$\Vec{0}$ is transient by $P_{\infty}$ when $d \geq 3$. 
\end{proposition}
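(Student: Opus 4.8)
The statement is the classical P\'olya recurrence theorem: as the reflecting boundary of $B_n$ recedes to infinity, $P_\infty$ is nothing but the standard simple random walk on $\mathbb{Z}^d$, with $P_\infty(\Vec{u},\Vec{v})=\tfrac1{2d}$ whenever $\|\Vec{u}-\Vec{v}\|_1=1$. Since this walk has period $2$, $R(2n+1)=0$, so by the definition \eqref{def:recurrent} everything comes down to deciding whether $\sum_{n\ge1}R(2n)$ diverges, where $R(2n)=P_\infty^{2n}(\Vec{0},\Vec{0})$. The plan is to evaluate $R(2n)$ by a direct count of closed walks: classifying a length-$2n$ walk by the numbers $k_j$ of forward (equivalently backward) steps along coordinate $j$, which must satisfy $k_1+\cdots+k_d=n$, one gets
\begin{align*}
R(2n)=\frac{1}{(2d)^{2n}}\sum_{k_1+\cdots+k_d=n}\binom{2n}{k_1,k_1,\ldots,k_d,k_d}=\frac{1}{(2d)^{2n}}\binom{2n}{n}\sum_{k_1+\cdots+k_d=n}\binom{n}{k_1,\ldots,k_d}^{2}.
\end{align*}

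For $d=1$ the inner sum is $1$, so $R(2n)=\binom{2n}{n}2^{-2n}=\Theta(n^{-1/2})$ by Stirling's formula and $\sum_n R(2n)=\infty$. For $d=2$ the Vandermonde identity $\sum_{k}\binom{n}{k}^{2}=\binom{2n}{n}$ (equivalently, the fact that a planar simple random walk decouples, after a $45^\circ$ rotation of the lattice, into two independent one-dimensional walks) gives $R(2n)=\bigl(\binom{2n}{n}2^{-2n}\bigr)^{2}=\Theta(n^{-1})$, so again $\sum_n R(2n)=\infty$. Hence $\Vec{0}$ is recurrent by $P_\infty$ whenever $d\le2$.

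For $d\ge3$ the plan is to bound the inner sum crudely by pulling out the largest multinomial coefficient: since $\sum_{k_1+\cdots+k_d=n}\binom{n}{k_1,\ldots,k_d}=d^n$, one has $\sum_{k_1+\cdots+k_d=n}\binom{n}{k_1,\ldots,k_d}^{2}\le d^n\max_{k_1+\cdots+k_d=n}\binom{n}{k_1,\ldots,k_d}$, and the balanced maximum is $\Order\!\bigl(d^n n^{-(d-1)/2}\bigr)$ by Stirling. Combined with $\binom{2n}{n}2^{-2n}=\Order(n^{-1/2})$ this yields $R(2n)=\Order\!\bigl(n^{-d/2}\bigr)$, whence $\sum_n R(2n)=\Order\bigl(\sum_n n^{-d/2}\bigr)<\infty$ for $d\ge3$; that is, $\Vec{0}$ is transient by $P_\infty$.

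\textbf{Main obstacle.} Every step above is routine once the closed-walk identity is written down; the only point needing a little care is the uniform Stirling estimate for the balanced multinomial coefficient in the case $d\ge3$ (equivalently, a local-central-limit-type upper bound $P_\infty^{2n}(\Vec{0},\Vec{0})=\Order(n^{-d/2})$). An alternative would be the Fourier representation $\sum_{n\ge0}P_\infty^{2n}(\Vec{0},\Vec{0})=(2\pi)^{-d}\int_{[-\pi,\pi]^d}\bigl(1-\tfrac1d\sum_{j=1}^{d}\cos\theta_j\bigr)^{-1}\dd\theta$, noting that the integrand is $\Theta(\|\theta\|_2^{-2})$ near the origin, so the integral converges exactly when $d>2$; but this requires an Abel-summation / monotone-convergence justification, so I would stay with the elementary combinatorial argument to match the style of the rest of the paper.
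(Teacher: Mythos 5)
The paper offers no proof of this proposition; it is stated as a classical fact with a citation to Durrett, and your argument is precisely the standard closed-walk-counting proof of P\'olya's theorem found there (including the Vandermonde decoupling for $d=2$ and the bound $\sum\binom{n}{k_1,\ldots,k_d}^2\le d^n\max\binom{n}{k_1,\ldots,k_d}$ for $d\ge3$). Your proof is correct, including the preliminary observation that $P_\infty$ is the ordinary simple random walk on $\mathbb{Z}^d$ since the reflecting boundary recedes; the only step left implicit, the Stirling estimate $\max_{k_1+\cdots+k_d=n}\binom{n}{k_1,\ldots,k_d}=\Order\bigl(d^n n^{-(d-1)/2}\bigr)$ for the near-balanced multinomial coefficient, is routine and you correctly flag it as the one point requiring care.
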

 Then, we are concerned with a RWaGG ${\cal D}_{\rm B} = (\mathfrak{d},G,P)$ where $G(n)=B_n$ and $P(n) = P_n$. 
 We remark that the dimension $d$ is \emph{constant} to the size of box $n$. 
\begin{theorem}\label{thm:box}
Suppose $d \geq 4$. Then, 
$\Vec{0}$ is recurrent by $\mathcal{D}_{\mathrm{B}}$
if 
\begin{align}
 \sum_{n=1}^{\infty}\frac{\mathfrak{d}(n) }{n^d}=\infty ,
\end{align}
 holds, otherwise $\Vec{0}$ is transient. 
\end{theorem}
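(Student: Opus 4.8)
The plan is to mirror the structure of the proof of Theorem~\ref{thm:karytree}, using the general Lemmas~\ref{lem:rec} and \ref{lem:trans-ergodic} (or their point-wise refinements) together with an LHaGG property for $\mathcal{D}_{\mathrm B}$. First I would record the standard facts about the static random walk $P_n$ on the box $B_n$: it is reversible with uniform stationary distribution, so $p(n) = \pi_n(\Vec 0) = 1/|V_n| = 1/(2n+1)^d = \Theta(n^{-d})$, and (after making the walk aperiodic, e.g.\ by working with $\tfrac12(I+P_n)$ or by invoking the period-2 variant as in Section~\ref{sec:term2}) its mixing time satisfies $\tau_n(\epsilon) = \mathrm{O}(n^2 \log \epsilon^{-1})$ by the usual coordinate-wise coupling on $[-n,n]^d$, exactly as in Lemma~\ref{lem:path-mixing}. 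These two estimates are the only quantitative inputs needed.

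Next I would establish that $\mathcal{D}_{\mathrm B}$ is LHaGG, i.e.\ that growing the box faster only decreases the return probability to $\Vec 0$. The natural coupling is the coordinate-wise one: for faster-growing $\Vec X$ and slower-growing $\Vec Y$ started at $\Vec 0$, couple the two walks so that in each coordinate the chosen direction agrees whenever both current coordinates have the same sign of displacement, and otherwise use the monotone (reflection) coupling to keep $|X_t^{(i)}| \ge |Y_t^{(i)}|$ coordinate-by-coordinate. One checks the four boundary cases (both interior, both at the wall, only the faster one at the wall, only the slower one at the wall) just as in Lemma~\ref{lem:lhagg-karytree}: at a reflecting wall the walk is forced inward, which is exactly the $\Pr[\cdot] \le 1$ comparison needed, so the partial order $|X_t^{(i)}| \ge |Y_t^{(i)}|$ for all $i$ is preserved. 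Then $Y_t = \Vec 0$ whenever $X_t = \Vec 0$, giving $R(t) \le R'(t)$ when the slow walk is the primed one — i.e.\ LHaGG. (If one prefers to stay with the period-2 formulation, replace "same height" comparisons by an $\ell_1$-distance comparison; the box is bipartite by parity of $\sum x_i$ and the argument is unchanged.)

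With LHaGG in hand the recurrence direction follows from Lemma~\ref{lem:rec-ergodic} (or Lemma~\ref{lem:rec} in the period-2 version): $\sum_n \mathfrak{d}(n) p(n) \asymp \sum_n \mathfrak{d}(n)/n^d = \infty$ forces recurrence. For the transience direction I would apply Lemma~\ref{lem:trans-ergodic}: here $p(n) = \Theta(n^{-d})$ and $\mathfrak t(n) = \tau_n(p(n)) = \mathrm{O}(n^2 \log(n^d)) = \mathrm{O}(n^2 \log n)$, so for $d \ge 4$ we have $\sum_n \mathfrak t(n) p(n-1) = \mathrm{O}\!\big(\sum_n n^2 \log n \cdot n^{-d}\big) < \infty$, and therefore $\sum_n \max\{\mathfrak d(n), \mathfrak t(n)\} p(n-1) < \infty$ whenever $\sum_n \mathfrak d(n)/n^d < \infty$ (using $p(n-1) = \Theta(p(n))$); Lemma~\ref{lem:trans-ergodic} then gives transience. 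This is precisely where the hypothesis $d \ge 4$ is used: it guarantees $\sum_n n^2\log n / n^d < \infty$, so the "mixing-time overhead" term is summable and does not interfere.

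The main obstacle I anticipate is not the recurrence half but getting the transience half clean: one must be sure the mixing-time bound used is the \emph{point-wise additive-error} version (as flagged in footnote~\ref{foot:additive-error} and Proposition~\ref{prop:trans2}), since $\mathfrak t(n)$ is fed into Lemma~\ref{lem:trans-ergodic} with target accuracy $p(n)$ that is polynomially small — the coupling bound $\mathrm{O}(n^2\log\epsilon^{-1})$ does deliver this, but the dependence of the implied constants on $d$ (which is fixed) should be stated once and for all. A secondary, purely cosmetic, point is reconciling the period-2 reflecting walk with the aperiodic Lemmas; I would either add laziness and cite Lemma~\ref{lem:rec-ergodic}/\ref{lem:trans-ergodic}, or phrase everything through even-time stationary distributions and Lemma~\ref{lem:rec}/\ref{lem:trans}, but not mix the two. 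Everything else is a routine repetition of the $k$-ary tree argument with "$(\lambda/k)^n$" replaced by "$n^{-d}$".
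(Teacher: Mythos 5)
Your proposal follows essentially the same route as the paper: the coordinate-wise coupling preserving $|X_{t,i}|\geq|Y_{t,i}|$ to establish LHaGG, the estimate $p(n)=\Theta(n^{-d})$, the $\mathrm{O}(n^2\log \epsilon^{-1})$ mixing-time bound, and then Lemmas~\ref{lem:rec} and \ref{lem:trans} with $d\geq 4$ absorbing the $n^2\log n$ mixing overhead. The only slip is that the stationary measure is not exactly uniform --- the reflecting boundary gives a vertex $\Vec v$ weight proportional to $2^{-\left|\{i \,:\, |v_i|=n\}\right|}$ (cf.\ Lemma~\ref{lem:pn-box}) --- but since $\mathring{\pi}_n(\Vec 0)=\Theta(n^{-d})$ regardless (Lemma~\ref{lem:pn-box-bound}), nothing in your argument is affected.
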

\begin{corollary}\label{cor:box}
If $\mathfrak{d}(n) = \Omega(n^{d-1})$ then $\Vec{0}$ is recurrent by $\mathcal{D}_{\mathrm{B}}$. 
If $\mathfrak{d}(n) = \Order(n^{d-1-\epsilon})$ then $\Vec{0}$ is transient by $\mathcal{D}_{\mathrm{B}}$. 
\end{corollary}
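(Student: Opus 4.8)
The plan is to obtain Corollary~\ref{cor:box} as an immediate consequence of Theorem~\ref{thm:box}, exactly mirroring the proof of Corollary~\ref{cor:simple-karytree}: one only has to decide whether the series $\sum_{n=1}^{\infty} \mathfrak{d}(n)/n^{d}$ converges under each of the two growth hypotheses on $\mathfrak{d}$. Throughout, $d \geq 4$ is fixed, so that Theorem~\ref{thm:box} applies.

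For the recurrence part, suppose $\mathfrak{d}(n) \geq c\, n^{d-1}$ for some constant $c>0$ and all sufficiently large $n$, say $n \geq n_0$. Then
\begin{align*}
 \sum_{n=1}^{\infty} \frac{\mathfrak{d}(n)}{n^{d}}
 \;\geq\; \sum_{n=n_0}^{\infty} \frac{c\, n^{d-1}}{n^{d}}
 \;=\; c \sum_{n=n_0}^{\infty} \frac{1}{n}
 \;=\; \infty ,
\end{align*}
since the harmonic series diverges. Theorem~\ref{thm:box} then yields that $\Vec{0}$ is recurrent by $\mathcal{D}_{\mathrm{B}}$.

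For the transience part, suppose $\mathfrak{d}(n) \leq c'\, n^{d-1-\epsilon}$ for constants $c',\epsilon>0$ (absorbing the finitely many small $n$ into $c'$ if necessary). Then
\begin{align*}
 \sum_{n=1}^{\infty} \frac{\mathfrak{d}(n)}{n^{d}}
 \;\leq\; c' \sum_{n=1}^{\infty} \frac{n^{d-1-\epsilon}}{n^{d}}
 \;=\; c' \sum_{n=1}^{\infty} \frac{1}{n^{1+\epsilon}}
 \;\leq\; c'\Bigl( 1 + \int_{1}^{\infty} x^{-1-\epsilon}\, \dd x \Bigr)
 \;=\; c'\Bigl( 1 + \tfrac{1}{\epsilon} \Bigr)
 \;<\; \infty ,
\end{align*}
a convergent $p$-series, so $\Vec{0}$ is transient by Theorem~\ref{thm:box}.

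I do not anticipate any real obstacle: the content is entirely the comparison of $\sum_{n} \mathfrak{d}(n)/n^{d}$ with a harmonic series and with a convergent $p$-series, and the only point deserving a word of care is that the $\Omega$ and $\Order$ hypotheses are asymptotic, so one discards a finite initial segment before applying the estimate. This is legitimate because recurrence/transience depends only on the tail of $\sum_{n} \mathfrak{d}(n)/n^{d}$ — each $\mathfrak{d}(n)$ being a finite nonnegative integer, the discarded terms contribute only a bounded amount — so no analogue of the separate $\mathfrak{d}(1)<\infty$ bookkeeping from Corollary~\ref{cor:simple-karytree} is needed here.
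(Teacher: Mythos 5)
Your proposal is correct and is exactly the intended derivation: the paper states Corollary~\ref{cor:box} without proof as an immediate consequence of Theorem~\ref{thm:box}, and the series comparisons you perform (harmonic divergence for $\Omega(n^{d-1})$, convergent $p$-series for $\Order(n^{d-1-\epsilon})$) mirror the paper's own proof of the analogous Corollary~\ref{cor:simple-karytree}. Your remark that no separate $\mathfrak{d}(1)$ bookkeeping is needed is also sound, since $\mathfrak{d}$ takes values in $\mathbb{Z}_{\geq 0}$.
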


\subsubsection{Proof of Theorem~\ref{thm:box}}
% For convenience, let $h_i(\Vec{v}) = |v_i|$ for $\Vec{v} = (v_1,\ldots,v_d)$, for $i=1,2,\ldots,d$.  
 To begin with, we remark the following fact. 
\begin{lemma}\label{lem:pn-box}
(i) Every $P_n$ ($n=1,2,\ldots$) is reversible: precisely, let 
\begin{align}
 \phi(\Vec{v}) = 2^{-\left|\{ i \in \{1,\ldots,d\}  \mid |v_i| = n \}\right| }
\label{eq:reversible-box}
\end{align}
 for $\Vec{v} = (v_1,\ldots,v_d) \in V_n$, 
  then the detailed balance equation 
\begin{align}
 \phi(\Vec{u})P_n(\Vec{u},\Vec{v}) = \phi(\Vec{v})P_n(\Vec{v},\Vec{u})
\label{dbe:box}
\end{align}
 holds for any $\Vec{u},\Vec{v} \in V_n$. 
(ii) Every $P_n$ is irreducible and ${\rm period}(P_n) = 2$. 
 Thus the even-time stationary distribution of $P_n$ is 
\begin{align}
\mathring{\pi}_n(\Vec{v}) 
= \frac{\phi(\Vec{v})}{\sum_{\Vec{u} \in U_n} \phi(\Vec{u})}
\label{eq:stationary-box}
\end{align}
for any $\Vec{v} \in U_n$. 
%(iii) $R(t) = 0$ for any odd $t$. 
\end{lemma}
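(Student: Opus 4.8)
The plan is to verify the two claims of Lemma~\ref{lem:pn-box} by direct computation, exactly mirroring the structure of the proof of Lemma~\ref{lem:pn-karytree} for the tree. First I would prove part~(i), the detailed balance equation \eqref{dbe:box}, by checking it edge by edge. Fix $\Vec{u},\Vec{v} \in V_n$ with $\|\Vec{u}-\Vec{v}\|_1 = 1$, so they differ in exactly one coordinate $i$, with $u_i \neq v_i$ and $u_j = v_j$ for $j \neq i$. Write $m(\Vec{x}) = |\{ j : |x_j| = n\}|$, so $\phi(\Vec{x}) = 2^{-m(\Vec{x})}$. Since $\Vec{u}$ and $\Vec{v}$ agree outside coordinate $i$, we have $m(\Vec{u}) - m(\Vec{v}) = \1[|u_i|=n] - \1[|v_i|=n]$. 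There are three cases: (a) neither $|u_i|$ nor $|v_i|$ equals $n$ — then $\phi(\Vec{u}) = \phi(\Vec{v})$ and $P_n(\Vec{u},\Vec{v}) = P_n(\Vec{v},\Vec{u}) = \frac{1}{2d}$, so both sides are equal; (b) $|u_i| = n$ and $|v_i| = n$ is impossible for adjacent vertices (two distinct integers in $\{-n,\dots,n\}$ that are both $\pm n$ cannot differ by $1$ when $n \geq 1$), so this case is vacuous; (c) exactly one of them, say $|u_i| = n$ and $|v_i| \neq n$ — then $\phi(\Vec{u}) = \tfrac12 \phi(\Vec{v})$, while from $\Vec{u}$ the reflecting rule gives $P_n(\Vec{u},\Vec{v}) = \tfrac1d$ (the coordinate $i$ is at the boundary) and from $\Vec{v}$ the interior rule gives $P_n(\Vec{v},\Vec{u}) = \tfrac1{2d}$; hence $\phi(\Vec{u})P_n(\Vec{u},\Vec{v}) = \tfrac12\phi(\Vec{v}) \cdot \tfrac1d = \phi(\Vec{v})\cdot\tfrac1{2d} = \phi(\Vec{v})P_n(\Vec{v},\Vec{u})$. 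For non-adjacent $\Vec{u},\Vec{v}$ both transition probabilities are $0$ and the identity is trivial. This establishes reversibility with the stated $\phi$.

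For part~(ii), irreducibility is immediate because the grid graph $B_n$ is connected, so some power of $P_n$ connects any two vertices. For the period, note that $B_n$ is bipartite: the bipartition is by the parity of $\sum_i v_i$, and every edge (which changes exactly one coordinate by $\pm 1$) flips this parity. Since $P_n$ has no self-loops, ${\rm period}(P_n) = 2$ by the Observation and the period-$2$ discussion in Section~\ref{sec:term2}. Finally, since $P_n$ is reversible and period $2$, Proposition~\ref{prop:reversible-monotone} (and the preceding development) guarantees a unique even-time stationary distribution; because reversibility with weight $\phi$ forces $\mathring{\pi}_n(\Vec{v}) \propto \phi(\Vec{v})$ on $U_n$, normalizing over $U_n$ yields \eqref{eq:stationary-box}. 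One should confirm the normalization set is exactly $U_n$ (the even-parity side containing $\Vec{0}$), which follows since $\Vec{0} \in U_n$ and $\mathring{\pi}_n$ is supported on $U_n$ by definition of even-time stationary distribution.

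The only mild subtlety — and the thing I would make sure to state carefully rather than the "main obstacle," since there is no real obstacle here — is the claim in case~(b) that adjacent vertices cannot both be at the boundary in the differing coordinate. This needs $n \geq 1$, which holds throughout; it is precisely the fact that makes the weight function $\phi$ jump by a clean factor of $2$ across exactly the boundary-crossing edges, and it is the analogue of the leaf-versus-internal-node dichotomy in the tree case. Everything else is a routine case check and a normalization, so the write-up should be short and follow the template of the proof of Lemma~\ref{lem:pn-karytree} verbatim in spirit.
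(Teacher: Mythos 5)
Your proof is correct, and the paper in fact states Lemma~\ref{lem:pn-box} without proof (it is introduced only as a remark), so your edge-by-edge verification of detailed balance---including the observation that case~(b) is vacuous and that boundary-crossing edges are exactly where $\phi$ jumps by a factor of $2$---together with the bipartiteness/connectivity argument for part~(ii) is precisely the routine argument the authors leave to the reader. No gaps.
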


We do not give a precise value of $\mathring{\pi}_n(\Vec{0})$, but the following lemma shows its upper and lower bounds.  
\begin{lemma}\label{lem:pn-box-bound}
\begin{align*}
\frac{1}{(2n+1)^d}
\leq p(n) 
\leq \frac{2}{(2n-1)^d}
\end{align*}
holds for any $n \in \mathbb{Z}_{>0}$. 
%\begin{align}
%\frac{1}{\prod_{i=1}^d \left(2\lfloor \frac{b_i(n)}{2}\rfloor+1\right)}
%\leq p(n) 
%\leq \frac{1}{\prod_{i=1}^d \max\left\{1,\left(2 \lfloor \frac{b_i(n)}{2}\rfloor -1\right)\right\}}
%\end{align}
\end{lemma}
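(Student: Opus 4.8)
The plan is to evaluate $\sum_{\Vec{u}\in U_n}\phi(\Vec{u})$ exactly, which by Lemma~\ref{lem:pn-box}(ii) pins down $p(n)=\mathring{\pi}_n(\Vec{0})$ precisely; both inequalities in the statement are then just crude monotonicity of $x\mapsto x^d$. The key observation is that the weight $\phi$ from \eqref{eq:reversible-box} factorizes over coordinates: setting $\psi(x)=\tfrac12$ when $|x|=n$ and $\psi(x)=1$ otherwise, we have $\phi(\Vec{v})=\prod_{i=1}^{d}\psi(v_i)$ for $\Vec{v}=(v_1,\dots,v_d)\in V_n$.

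First I would record two one-line sums over a single coordinate: $\sum_{x=-n}^{n}\psi(x)=(2n-1)+2\cdot\tfrac12=2n$, and $\sum_{x=-n}^{n}(-1)^{x}\psi(x)=0$ --- indeed the two boundary terms $x=\pm n$ together contribute $(-1)^{n}$, while the $2n-1$ consecutive integers $x\in\{-(n-1),\dots,n-1\}$ contribute $(-1)^{n-1}$, and the two cancel. Next, since membership in $U_n$ is exactly the event that $\sum_i u_i$ is even, I would use the identity $\1\{\sum_i u_i \text{ even}\}=\tfrac12\bigl(1+\prod_{i=1}^{d}(-1)^{u_i}\bigr)$ together with the product structure of $\phi$ to get
\begin{align*}
\sum_{\Vec{u}\in U_n}\phi(\Vec{u})
&=\frac12\left(\sum_{\Vec{u}\in V_n}\phi(\Vec{u})+\sum_{\Vec{u}\in V_n}\phi(\Vec{u})\prod_{i=1}^{d}(-1)^{u_i}\right)\\
&=\frac12\left(\prod_{i=1}^{d}\sum_{x=-n}^{n}\psi(x)+\prod_{i=1}^{d}\sum_{x=-n}^{n}(-1)^{x}\psi(x)\right)
=\frac12\bigl((2n)^{d}+0\bigr)=2^{d-1}n^{d}.
\end{align*}

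Since $\Vec{0}\in U_n$ and $\phi(\Vec{0})=1$, formula \eqref{eq:stationary-box} then gives $p(n)=\mathring{\pi}_n(\Vec{0})=1\big/(2^{d-1}n^{d})=2/(2n)^{d}$, and the claimed bounds follow at once from $(2n-1)^{d}\le(2n)^{d}\le 2(2n+1)^{d}$.

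I do not expect any real obstacle; the only mildly delicate point is the parity-restricted sum, where a priori the half-weights living on the boundary faces $\{|v_i|=n\}$ could spoil the naive ``exactly half of the mass has even parity'' heuristic, and it is precisely the vanishing alternating sum $\sum_{x=-n}^{n}(-1)^{x}\psi(x)=0$ that makes it go through. If one wished to avoid computing the exact value, $p(n)\ge 1/(2n+1)^{d}$ is immediate from $\sum_{\Vec{u}\in U_n}\phi(\Vec{u})\le|V_n|=(2n+1)^{d}$, and for the upper bound one can instead restrict the sum to the interior sub-box $\{-(n-1),\dots,n-1\}^{d}$, on which $\phi\equiv1$, adding back the even-parity boundary vertices with weight $\tfrac12$ when $n$ is even; but the factorization computation above is both shorter and sharper.
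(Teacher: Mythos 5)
Your proof is correct, and it actually does more than the lemma asks: the paper states Lemma~\ref{lem:pn-box-bound} without proof (explicitly declining to give a precise value of $\mathring{\pi}_n(\Vec{0})$), whereas your factorization argument pins down the exact value. The key steps all check out: $\phi$ from \eqref{eq:reversible-box} does factor as $\prod_i\psi(v_i)$; the single-coordinate sums $\sum_{x=-n}^{n}\psi(x)=2n$ and $\sum_{x=-n}^{n}(-1)^x\psi(x)=0$ are right (the boundary pair contributes $(-1)^n$ against the interior's $(-1)^{n-1}$); $U_n$ is indeed the even-coordinate-sum part of the bipartition, so the indicator identity applies; and $\phi(\Vec{0})=1$ since $0\neq n$ for $n\geq 1$. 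Hence $\sum_{\Vec{u}\in U_n}\phi(\Vec{u})=2^{d-1}n^d$ and $p(n)=2/(2n)^d$, from which $(2n-1)^d\leq(2n)^d\leq 2(2n+1)^d$ gives both stated inequalities. Compared with the crude route the paper's phrasing suggests (upper-bounding $\sum_{U_n}\phi$ by $|V_n|=(2n+1)^d$ for the lower bound on $p(n)$, and lower-bounding it via the interior sub-box for the upper bound), your computation is cleaner precisely where the crude route is delicate: counting even-parity points of $\{-(n-1),\dots,n-1\}^d$ alone can fall short of $\tfrac12(2n-1)^d$ by one, and the half-weighted boundary vertices are exactly what restores the claimed constant, as your alternating-sum cancellation makes transparent. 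The exact value $p(n)=2^{1-d}n^{-d}$ also slightly sharpens the constant used later in the proof of Theorem~\ref{thm:box}, where the paper writes $p(n)\simeq\frac{1}{2^d}\frac{1}{n^d}$.
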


\begin{lemma}\label{lem:mixing-box}
Let $\mathring{\tau}_n(\epsilon)$ denote the mixing time of $P_n$, then 
$\mathring{\tau}_n(\epsilon) = O(n^2 d \log d/\epsilon)$. 
\end{lemma}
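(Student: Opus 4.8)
The plan is to treat $P_n$ as the \emph{random-coordinate} walk on the Cartesian product $\{-n,\dots,n\}^{d}$, each of the $d$ factors being the simple random walk with reflecting endpoints on a path of $2n+1$ vertices, and to bound the even mixing time $\mathring{\tau}_n(\epsilon)$ by a coupling argument; the factor $1/\epsilon$ (rather than $\log\epsilon^{-1}$) in the claimed bound indicates that one controls the \emph{expected} coupling time and then invokes Markov's inequality. One step of $P_n$ selects a coordinate $i\in\{1,\dots,d\}$ uniformly and makes a $\pm1$ move on that coordinate, reflected at $\pm n$. Since $\mathring{\tau}_n$ equals twice the mixing time of $P_n^{2}[U_n]$, the coupling is built for $P_n^{2}[U_n]$.

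First I would set up the coupling. Given two copies $X,Y$ started from arbitrary $u,u'\in U_n$, at each step pick the \emph{same} coordinate $i$ for both and apply the one-dimensional reflection (``mirror'') coupling on coordinate $i$: if $X^{(i)}=Y^{(i)}$ make the same move (they stay equal, even at a reflecting end); if $X^{(i)}<Y^{(i)}$, with probability $\tfrac12$ move $X^{(i)}$ up and $Y^{(i)}$ down, and with probability $\tfrac12$ the reverse, reflecting whenever a move would leave $\{-n,\dots,n\}$. Each marginal is $P_n$, every coordinatewise distance $|X^{(i)}_t-Y^{(i)}_t|$ is non-increasing and absorbing at $0$, and the copies coalesce once all $d$ of them are $0$. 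Restricted to the subsequence of steps at which coordinate $i$ is selected, $X^{(i)}-Y^{(i)}$ (once its two endpoints have matching parity) performs a symmetric, absorbing-at-$0$ random walk confined to an interval of length $O(n)$, whose expected absorption time is $O(n^{2})$ selections, uniformly in the starting difference and with an exponential tail past that scale. As coordinate $i$ is selected with probability $1/d$ per global step, the expected number of global steps for coordinate $i$ to coalesce is $O(dn^{2})$; taking the maximum over the $d$ coordinates (using the exponential tails for a union bound) gives $\E[\tau_{\mathrm{couple}}]=O(dn^{2}\log d)$, uniformly over starting pairs. The product-chain and coupon-collector bookkeeping here is routine and I would only sketch it.

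Then I conclude by the coupling inequality: for every even time $2t'$ and every $u\in U_n$,
\begin{align*}
 \tfrac12\sum_{v\in U_n}\bigl|P_n^{2t'}(u,v)-\mathring{\pi}_n(v)\bigr|
 \;\le\; \max_{u,u'\in U_n}\Pr[\tau_{\mathrm{couple}}>2t']
 \;\le\; \frac{\E[\tau_{\mathrm{couple}}]}{2t'},
\end{align*}
so the right-hand side is at most $\epsilon$ as soon as $2t'\ge \E[\tau_{\mathrm{couple}}]/\epsilon=O(n^{2}d\log d/\epsilon)$, which is the asserted bound on $\mathring{\tau}_n(\epsilon)$.

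The main obstacle is the period-$2$ structure. With the synchronized mirror coupling above, a coordinate in which $X$ and $Y$ \emph{differ in parity} can never coalesce, and $U_n$ does contain such pairs (it only forces the coordinate sum to be even, not each coordinate). This is precisely why the coupling must run on $P_n^{2}[U_n]$ and not on $P_n$: within a two-step block one is allowed to let $X$ and $Y$ update \emph{different} coordinates, and since the number of parity-mismatched coordinates is necessarily even one can pair them and repair all parity discrepancies in $O(dn^{2})$ expected steps — a lower-order cost — before the mirror coupling drives the remaining distances to $0$. Writing out this parity-repair phase and checking that it keeps each marginal equal to $P_n^{2}$ is the one place the argument needs genuine care; the rest is standard.
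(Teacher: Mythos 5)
The paper gives no proof of this lemma at all, so there is nothing to compare against; your synchronized-coordinate mirror coupling on $P_n^2[U_n]$ is the natural argument and its skeleton (per-coordinate absorption in $O(n^2)$ selections, coupon-collector over $d$ coordinates, coupling inequality) is sound. Two points, however, need attention. First, and most importantly, your decision to read the bound as $(\log d)/\epsilon$ and to conclude via Markov's inequality, $\Pr[\tau_{\mathrm{couple}}>2t']\le \E[\tau_{\mathrm{couple}}]/(2t')$, produces a bound that is \emph{linear} in $1/\epsilon$, and that version of the lemma is too weak for the only place it is used: in the transience half of Theorem~\ref{thm:box} the lemma is invoked with $\epsilon=p(n-1)=\Theta(n^{-d})$ and the estimate $\mathring{\tau}_n(p(n-1))\le n^2\log\frac{1}{p(n-1)}=O(n^2 d\log n)$ is what makes $\sum_n\mathring{\mathfrak{t}}(n)p(n-1)$ converge. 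With a linear $1/\epsilon$ dependence you would instead get $\mathring{\tau}_n(p(n-1))=\Theta(n^{2+d}d\log d)$ and $\sum_n\mathring{\mathfrak{t}}(n)p(n-1)=\Theta(\sum_n n^2)=\infty$, so the downstream argument collapses. The fix is already contained in your own sketch: you assert an exponential tail for each coordinate's coalescence time, so instead of passing to the expectation you should union-bound $\Pr[\tau_{\mathrm{couple}}>t]\le d\,e^{-ct/(dn^2)}$ directly, which yields $\mathring{\tau}_n(\epsilon)=O\!\left(n^2 d\log(d/\epsilon)\right)$ — the reading of the lemma that the paper actually uses.

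Second, the parity-repair phase, which you correctly identify as the crux for this period-$2$ chain, is left as a promise rather than a construction. It can be completed as follows: for $u,u'\in U_n$ the set $S$ of coordinates $i$ with $u_i-u'_i$ odd has even cardinality, so fix a fixed-point-free involution $\sigma$ on $S$ extended by the identity off $S$, and in each step let $X$ choose a uniform coordinate $I$ while $Y$ chooses $J=\sigma(I)$; both marginals remain uniform, a draw $I\in S$ flips the parity of exactly the two differences indexed by $I$ and $\sigma(I)$ (both odd to even), and a draw $I\notin S$ updates the same coordinate in both chains and preserves all parities. This repairs all mismatches in $O(d\log d)$ expected steps, after which your mirror coupling applies; the cost is indeed lower order. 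With these two repairs the proof is correct.
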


\begin{lemma}\label{lem:lhagg-box}
$\mathcal{D}_{\rm B}$ is LHaGG. %, i.e., $\Pr[X_t=r] \geq \Pr[Y_t=r]$. 
\end{lemma}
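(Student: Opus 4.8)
The plan is to mirror the coupling argument used for the $k$-ary tree in Lemma~\ref{lem:lhagg-karytree}, but using the $\ell_1$-distance from the origin in place of the tree-height. Specifically, let $f$ and $g$ satisfy $\sum_{i=1}^n f(i) \le \sum_{i=1}^n g(i)$ for all $n$, and let $X_t$ and $Y_t$ follow $(f,G,P)$ and $(g,G,P)$ respectively, both started at $\Vec{0}$; the box of $\Vec{X}$ grows (weakly) faster. I want to build a Markovian coupling of $\Vec{X}=\{X_t\}$ and $\Vec{Y}=\{Y_t\}$ under which $\|X_t\|_1 \ge \|Y_t\|_1$ for all $t$, since that immediately gives $\Pr[X_t=\Vec{0}] \le \Pr[Y_t=\Vec{0}]$, which is exactly LHaGG (recall $\Pr[X_t=\Vec{0}]=0$ whenever $\|X_t\|_1$ has the wrong parity, and parity of $\|X_t\|_1$ equals $t \bmod 2$ since every $P_n$ has period $2$). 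I would carry out an induction on $t$: the base case $\|X_0\|_1=\|Y_0\|_1=0$ is trivial, and the step assumes $\|X_t\|_1 \ge \|Y_t\|_1$ and produces a coupling of the one-step moves maintaining the inequality at $t+1$.

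The natural coupling is coordinate-by-coordinate: pick the same coordinate $i\in\{1,\dots,d\}$ for both walks (each with probability $1/d$, which works because both transition matrices pick a uniformly random coordinate to move in), and then within that coordinate try to move both walks "in the same direction" ($+$ or $-$) with the matched randomness, correcting only at the reflecting boundary. The key quantity is $\|X_t\|_1-\|Y_t\|_1 = \sum_i(|X_t^{(i)}|-|Y_t^{(i)}|)$. When $\|X_t\|_1 > \|Y_t\|_1$, parity forces $\|X_t\|_1 \ge \|Y_t\|_1+2$, and since a single step changes $\|\cdot\|_1$ by exactly $\pm 1$ we get $\|X_{t+1}\|_1 \ge \|X_t\|_1-1 \ge \|Y_t\|_1+1 \ge \|Y_{t+1}\|_1$ regardless of the coupling — this disposes of the strict case just as in the tree proof. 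The substantive case is $\|X_t\|_1=\|Y_t\|_1$, where I must check, coordinate by coordinate and case-splitting on whether $|X_t^{(i)}|$ and/or $|Y_t^{(i)}|$ equal the respective box radii, that the moves can be coupled so that $|X_{t+1}^{(i)}|-|Y_{t+1}^{(i)}|\ge 0$ is preserved (or, when it was $0$, kept $\ge -$ nothing goes wrong globally because the total change is $\pm1$). The cases are: neither walk at its boundary in coordinate $i$ (moves are symmetric, couple identically); $X$ not at boundary but $Y$ at its (smaller) boundary (then $|Y^{(i)}|$ must decrease, so couple $X$'s move to also decrease $|X^{(i)}|$ with probability $1/2$ and otherwise increase it — this is the analogue of case (iv) in the tree lemma); both at their boundaries; and the origin coordinate where $X^{(i)}=Y^{(i)}=0$ (move is symmetric $\pm1$, couple identically). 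Because $X$'s box is at least as large, the situation "$X$ at boundary, $Y$ not" cannot arise when $|X^{(i)}|=|Y^{(i)}|$, which is what makes the induction close.

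The main obstacle is bookkeeping rather than conceptual: unlike the tree, where the state collapses to a single height coordinate, here $\|X_t\|_1=\|Y_t\|_1$ does not force $|X_t^{(i)}|=|Y_t^{(i)}|$ in each coordinate, so a coordinate-wise coupling must be chosen carefully and its effect on the total $\ell_1$-norm aggregated — I would need the stronger inductive hypothesis $|X_t^{(i)}| \ge |Y_t^{(i)}|$ for \emph{every} $i$ (which does hold under the coupling above, since the boxes are nested and the matched moves never let a $Y$-coordinate overtake the corresponding $X$-coordinate), from which $\|X_t\|_1\ge\|Y_t\|_1$ follows by summation. With that per-coordinate invariant the case analysis is exactly the four-case split of Lemma~\ref{lem:lhagg-karytree} applied in each coordinate independently, and the conclusion $\Pr[X_t=\Vec 0]\le\Pr[Y_t=\Vec 0]$ follows since $X_t=\Vec 0$ implies $Y_t=\Vec 0$ under the coupling. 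I would then combine this lemma with Lemmas~\ref{lem:pn-box-bound} and \ref{lem:mixing-box} via Lemmas~\ref{lem:rec} and \ref{lem:trans} to obtain Theorem~\ref{thm:box}, exactly paralleling the tree case.
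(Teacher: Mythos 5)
Your proposal is correct and follows essentially the same route as the paper: a coupling that updates the same coordinate $i$ in both walks and maintains the per-coordinate invariant $|X_t^{(i)}|\geq|Y_t^{(i)}|$ (which you rightly identify as the necessary strengthening of the $\ell_1$-norm comparison), with the same case split on whether each walk sits at its reflecting boundary. The only cosmetic difference is that the paper handles the strict case $|X^{(i)}|>|Y^{(i)}|$ coordinate-wise via the parity of $|X^{(i)}|-|Y^{(i)}|$ rather than through the total $\ell_1$-norm, but this changes nothing of substance.
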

\begin{proof}
 Let $f$ and $g$ satisfy $\sum_{i=1}^n f(i) \leq \sum_{i=1}^n g(i)$ for any $n=1,2,\ldots$, and 
  let $X_t$ and $Y_t$ ($t=0,1,2,\ldots$) respectively follow $(f,G,P)$ and $(g,G,P)$, 
  i.e., the box of $(f,G,P)$ grows faster than $(g,G,P)$. 
 Let $X_0 =Y_0 =\Vec{0}$, and 
  we prove $\Pr[X_t = \Vec{0}] \leq \Pr[Y_t = \Vec{0}]$ for any $t=1,2,\ldots$. % (recall Section \ref{sec:lhagg} for LHaGG). 

 The proof is (a kind of) coupling, and 
  we prove $|X_{t,i}| \geq |Y_{t,i}|$ for any $i=1,2,\ldots,d$ at any time $t=1,2,\ldots$. 
 Consider the pair of transitions $X$ to $X'$ and $Y$ to $Y'$, for convenience of the notation.  
 In any transition, let both $X$ and $Y$ choose the same index $i \in \{1,\ldots,d\}$ for the transition. 
 We remark that $|X_i| > |Y_i|$ implies $|X_i| \geq |Y_i|+2$, 
   because $|X'_i - Y'_i|$ should be one of $|X_i - Y_i|+2$, $|X_i - Y_i|$ or $|X_i - Y_i|-2$. 
 Thus, if $|X_i| > |Y_i|$ then $|X'_i| \geq |X_i|-1 \geq |Y_i|+1  \geq  |Y'_i|$ holds, meaning that $|X'_i| \geq |Y'_i|$. 
 
Then, suppose $|X_i| = |Y_i|$. 
We consider four cases: 
(i) $Y_i=0$,  
(ii) $0 <  |Y_i| < n_y$, 
(iii) $|Y_i|=n_Y$ and $n_Y < n_X$, 
(iv) $|Y_i| =n_Y= n_X$, 
where $n_X$ and $n_Y$ respectively denotes the phase (or, boundary of the boxes) of $\mathcal{D}$ and  $\mathcal{D}'$ at time $t$. 
%%%
 In case (i), 
\begin{align*}
\Pr[|X'_i| = |X_i|+1] &= \Pr[Y'_i = Y_i+1] = \frac{1}{d}
\end{align*}
 hold, and hence we can couple them to satisfy $|X'_i| = |Y'_i|$. % and hence $h(X') \geq h(Y')$. 
%%%
 In case (ii), 
\begin{align*}
\Pr[|X'_i| = |X_i|+1] &= \Pr[|Y'_i| = |Y_i|+1] = \frac{1}{2d} \\
\Pr[|X'_i| = |X_i|-1] &= \Pr[|Y'_i| = |Y_i|-1] = \frac{1}{2d} 
\end{align*}
 hold, and hence we can couple them to satisfy $|X'_i| = |Y'_i|$. % and hence $h(X') \geq h(Y')$. 
%%%%
 In case (iii), 
\begin{align*}
\Pr[|Y'_i| = |Y_i|-1] = 1
\end{align*}
 holds, and hence we can couple them to satisfy $|X'_i| \geq |Y'_i|$. % and hence $h(X') \geq h(Y')$. 
 In case (iv), 
\begin{align*}
\Pr[|X'_i| = |X_i|-1] = \Pr[|Y'_i| = |Y_i|-1] = 1
\end{align*}
 holds, and hence we can couple them to satisfy $|X'_i| = |Y'_i|$. 

 Thus, we got a coupling to satisfy $|X_{t,i}| \geq |Y_{t,i}|$ for any $i=1,2,\ldots, d$ at any time $t=1,2,\ldots$. 
 It implies that if $X_t = \Vec{0}$ then $Y_t = \Vec{0}$, and hence that $\Pr[X_t =\Vec{0}] \leq \Pr[Y_t=\Vec{0}]$. 
 We obtain the claim. 
\end{proof}

\begin{proof}[Proof of Theorem~\ref{thm:box}]
%First we prove the case that $|V_n|$ grows enough large such that 
%$\sum_{n=1}^{\infty}\frac{1}{|V_n|}<\infty$, 
%e.g., $|V_n| = \Omega (n^{1+\epsilon})$. 

(Recurrence) 
By Lemma~\ref{lem:lhagg-box}, ${\cal D}_{\rm B}$ is LHaGG. 
Since $p(n) \simeq \frac{1}{2^d}\frac{1}{n^d}$ by Lemma~\ref{lem:pn-box-bound}, 
Lemma~\ref{lem:rec} implies that 
if $\sum_{n=1}^{\infty}\frac{\mathfrak{d}(n) }{n^d}=\infty$ then $\Vec{0}$ is recurrent.

(Transience)
By Lemma~\ref{lem:mixing-box}, 
$\mathring{\mathfrak{t}}(n) = \mathring{\mathfrak{\tau}}_n (p(n-1)) \leq n^2 \log \frac{1}{p(n-1)} \leq n^2 \log ((2n-1)^d) \leq c' n^2 \log n$, and hence 
$\sum_{n=1}^{\infty} \mathring{\mathfrak{t}}(n) p(n-1) \leq  \sum_{n=1}^{\infty} c'n^2 \log n \frac{2}{(2n-1)^d} < \infty$ when $d\geq 4$. 
If $\sum_{n=1}^{\infty} \frac{\mathfrak{d}(n)}{n^d} < \infty$, 
 then $\sum_{n=1}^{\infty} \max\{ \mathfrak{d}(n),  \mathring{\mathfrak{t}}(n)\} p(n-1) \leq \sum_{n=1}^{\infty} (\mathfrak{d}(n) + \mathring{\mathfrak{t}}(n)) \frac{1}{(2n-3)^d} \leq \frac{2}{2^d}\sum_{n=1}^{\infty} (\mathfrak{d}(n) + \mathring{\mathfrak{t}}(n)) \frac{1}{n^d} < \infty$, 
 which implies $\sum_{t=1}^{\infty} R(t) < \infty$ by Lemma~\ref{lem:trans} with Lemma~\ref{lem:lhagg-box}. 
\end{proof}

\subsubsection{Remark}
% Coupling on integer lattice is a classical topic in the mixing time of Markov chains, and well studied.  
% We can obtain Theorem~\ref{?} by Dembo et al.~\cite{?}, 
%  while we remark that our example is periodic. 

%On the other hand, we have some small trouble that 
% if $V_n = \prod_i^d \{-a_i(n),\ldots,b_i(n)\}$ 
% by the coupling argument. 

 The growing box model is slightly generalized, 
   while our proof may remain a gap between the recurrence and transition 
    depending on the parameter $b_i$.  
 Let $b_i \colon \mathbb{Z}_{>0} \to  \mathbb{Z}_{>0} $ for $i=1,2,\ldots,d$  
  be monotone non-decreasing functions with respect to $n$. 
% For convenience of a proof (Lemma~\ref{lem:pn-box-bound}), 
%   we assume $b_i(n) \geq 2$ for any $n$, but it is not essential. 
Let $B_n = (V_n,E_n)$ where 
\begin{align*}
V'_n &= \prod_{i=1}^d\{-b_i(n),\ldots,-1,0,1,\ldots,b_i(n)\} \\
E'_n & \textstyle = \left\{ \{\Vec{x},\Vec{y}\} \in {V_n \choose 2} \mid \|\Vec{x}-\Vec{y}\|_1=1   \right\}
\end{align*}
 for $n=1,2,\ldots,$ where we remark $d$ is constant to $n$. 
We consider the reflection bound, that is formally given by 
\begin{align*} 
P'_n(\Vec{u},\Vec{v}) = 
\begin{cases}
\frac{1}{2d} & \text{if $\|\Vec{u}-\Vec{v}\|_1=1$, $u_i \neq v_i$ and $|u_i| \neq b_i(n)$} \\
\frac{1}{d} & \text{if $\|\Vec{u}-\Vec{v}\|_1=1$, $u_i \neq v_i$ and $|u_i| = b_i(n)$,} \\
0 & \text{otherwise,}
\end{cases}
\end{align*}
   for $\Vec{u} = (u_1,\ldots,u_d)  \in V_n$ and $\Vec{v}=(v_1,\ldots,v_d) \in V_n$. 
 Then, we are concerned with RWoGG $\mathcal{D}_{\mathrm{B}'} = (\mathfrak{d},G',P')$ 
 where $G'(n) = B_n$ and $P'(n) = P'_n$. 
\begin{theorem}\label{thm:box2}
$\Vec{0}$ is recurrent by $\mathcal{D}_{\mathrm{B}'}$
if 
\begin{align*}
 \sum_{n=1}^{\infty}\frac{\mathfrak{d}(n) -1 }{|V_n|}=\infty 
\end{align*}
 holds. $\Vec{0}$ is transient by $\mathcal{D}_{\mathrm{B}'}$ if  
\begin{align*}
 \sum_{n=1}^{\infty}\frac{\max\{\mathfrak{d}(n), \sum_{i=1}^n b_i(n)^2 \log  |V_n| \} }{|V_n|}<\infty 
\end{align*}
holds. 
\end{theorem}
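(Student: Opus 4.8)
The plan is to follow the proof of Theorem~\ref{thm:box} almost verbatim within the LHaGG framework, the only new feature being that the box is now anisotropic, with $2b_i(n)+1$ sites along coordinate $i$. First I would establish the analogue of Lemma~\ref{lem:pn-box}: the weight
\begin{align*}
\phi(\Vec{v}) = 2^{-\left|\{\,i\in\{1,\ldots,d\}\;:\;|v_i| = b_i(n)\,\}\right|}
\end{align*}
satisfies the detailed balance equation $\phi(\Vec{u})P'_n(\Vec{u},\Vec{v}) = \phi(\Vec{v})P'_n(\Vec{v},\Vec{u})$, so each $P'_n$ is reversible; it is irreducible, and since $B_n$ is connected and bipartite with respect to the parity of $\sum_i v_i$ while $P'_n$ has no self-loops, ${\rm period}(P'_n) = 2$, so the even-time stationary distribution is $\mathring{\pi}_n(\Vec{v}) = \phi(\Vec{v})/\sum_{\Vec{u}\in U_n}\phi(\Vec{u})$. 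A coordinatewise computation gives $\sum_{\Vec{v}\in V_n}\phi(\Vec{v}) = \prod_{i=1}^d 2b_i(n)$, and since $\phi(\Vec{0})=1$ and $U_n$ carries at least a bounded fraction of this mass, this yields $c_d/|V_n| \le p(n) \le C_d/|V_n|$ for $p(n)\defeq\mathring{\pi}_n(\Vec{0})$, with $c_d,C_d$ depending only on $d$; this is the analogue of Lemma~\ref{lem:pn-box-bound}.

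Next I would check that $\mathcal{D}_{\mathrm{B}'}$ is LHaGG by reusing the coupling of Lemma~\ref{lem:lhagg-box} \emph{coordinatewise}. Given $f,g$ with $\sum_{i=1}^n f(i)\le\sum_{i=1}^n g(i)$ for all $n$, run $X_t$ following $(f,G',P')$ and $Y_t$ following $(g,G',P')$ from $\Vec{0}$; at every step let both copies pick the same index $i$, and couple the two induced one-dimensional reflecting moves so as to preserve $|X_{t,i}|\ge|Y_{t,i}|$ for each $i$. The four cases ($Y_i=0$; $0<|Y_i|<b_i(n_Y)$; $|Y_i|=b_i(n_Y)<b_i(n_X)$; $|Y_i|=b_i(n_Y)=b_i(n_X)$, with $n_X,n_Y$ the phases of the two copies at time $t$ as in that proof) go through unchanged, because the argument never couples distinct coordinates and the per-coordinate boundary $b_i(n)$ simply plays the role the common bound $n$ did before. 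Hence $\Pr[X_t=\Vec{0}]\le\Pr[Y_t=\Vec{0}]$, i.e.\ LHaGG.

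For the quantitative inputs I would bound the even mixing time of $P'_n$, mirroring Lemma~\ref{lem:mixing-box}, by the coupling that selects the same coordinate in both copies and runs a monotone coupling of the reflecting walk inside it: each coordinate coalesces after $O(b_i(n)^2)$ of its own moves, giving $\mathring{\tau}_n(\epsilon) = O\big(\textstyle\sum_{i=1}^d b_i(n)^2\cdot\log(1/\epsilon)\big)$. Recurrence then follows from Lemma~\ref{lem:rec}: since $p(n)\ge c_d/|V_n|$, the hypothesis $\sum_n(\mathfrak{d}(n)-1)/|V_n|=\infty$ forces $\sum_n(\mathfrak{d}(n)-1)p(n)=\infty$. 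For transience I would feed $p(n)\le C_d/|V_n|$ together with $\mathring{\mathfrak{t}}(n)=\mathring{\tau}_n(p(n)) = O\big(\textstyle\sum_{i=1}^d b_i(n)^2\cdot\log|V_n|\big)$ into Lemma~\ref{lem:trans}, so that the series $\max\{\mathfrak{d}(1),\mathring{\mathfrak{t}}(1)\} + \sum_{n\ge2}\max\{\mathfrak{d}(n),\mathring{\mathfrak{t}}(n)\}\,p(n-1)$ is controlled, up to constants depending on $d$, by the series appearing in the statement.

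I expect the one genuinely delicate point to be the index shift in the transience step: Lemma~\ref{lem:trans} weights the phase-$n$ cost by $p(n-1) = \Theta(1/|V_{n-1}|)$ rather than by $1/|V_n|$, and since only $|V_{n-1}|\le|V_n|$ is available, the passage to the stated $1/|V_n|$ is clean precisely when the box grows with bounded ratio, $|V_n| = O(|V_{n-1}|)$; for faster growth one is left with exactly the slack between the recurrence and transience thresholds that the remark already flags. The only other slightly delicate ingredient is making the one-dimensional reflecting coupling rigorous (the bounce at $\pm b_i(n)$), but that is a routine $O(b_i(n)^2)$ estimate, and everything else is bookkeeping parallel to Section~\ref{sec:box}.
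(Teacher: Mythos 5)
Your proposal is correct and is exactly the route the paper intends: the paper gives no separate proof of Theorem~\ref{thm:box2} beyond the sentence that it is ``essentially the same as Theorem~\ref{thm:box}'', and your coordinatewise reversibility/LHaGG/mixing-time adaptation is that same argument spelled out. Your flagged delicate point is genuine --- Lemma~\ref{lem:trans} charges phase $n$ at rate $p(n-1)=\Theta(1/|V_{n-1}|)$, so the stated transience condition with $1/|V_n|$ only follows cleanly when $|V_n|=\Order(|V_{n-1}|)$ --- and this is consistent with the paper's own caveat that the proof ``may remain a gap \ldots depending on the parameter $b_i$''.
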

The proof is essentially the same as Theorem~\ref{thm:box}. 
We remark that $b_i(n)$ can be constant (for some $i$, otherwise trivially recurrent), 
 that allows the box not to be fat, cf.\ Assumption 1.1 of \cite{DHS14J}. 

\subsection{Random walk on $\{0,1\}^n$ with an increasing $n$}\label{sec:cube} 
\subsubsection{Main result}
This section shows an interesting example. 
Let $C_n = (V_n,E_n)$ where 
\begin{align}
V_n &= \{0,1\}^n \\
E_n & \textstyle = \left\{ \{\Vec{u},\Vec{v}\} \in {V_n \choose 2} \mid \|\Vec{u}-\Vec{v}\|_1=1   \right\}
\end{align}
for $n=1,2,\ldots$. 
Let $\Vec{0} \in V_n$ denote the (common) origin vertex $(0,\ldots,0)$ for each $n$. 
Let 
\begin{equation} 
P_n(\Vec{u},\Vec{v}) = 
\begin{cases}
\frac{1}{n} & \text{if $\|\Vec{u}-\Vec{v}\|_1=1$,} \\
0 & \text{otherwise,}
\end{cases}
\end{equation}
for $\Vec{u},\Vec{v} \in V_n$. 
Then, we are concerned with ${\cal D}_{\rm C} = (\mathfrak{d},G,P)$ starting from $\Vec{0}$ where $G(n)=C_n$ and $P(n) = P_n$. 
\begin{theorem}\label{thm:cube}
If $\mathcal{D}_{\rm C}$ satisfies
\begin{align}
 \sum_{n=1}^{\infty}\frac{\mathfrak{d}(n) }{2^{n}}=\infty 
\end{align}
then $\Vec{0}$ is recurrent, otherwise $\Vec{0}$ is transient. 
\end{theorem}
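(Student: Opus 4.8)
The plan is to mimic the structure used for the $k$-ary tree and the box: establish that $\mathcal{D}_{\rm C}$ is LHaGG, compute (up to constants) the even-time stationary mass $p(n)=\mathring\pi_n(\Vec 0)$, bound the even mixing time of $P_n$, and then invoke Lemma~\ref{lem:rec} for the recurrent direction and Lemma~\ref{lem:trans} for the transient direction. First I would record the elementary facts about $P_n$: it is the simple random walk on the hypercube $\{0,1\}^n$, hence reversible with uniform stationary distribution; the graph is bipartite with parts indexed by the parity of $\|\Vec v\|_1$, so $\mathrm{period}(P_n)=2$, and the even-time stationary distribution is uniform on the even-weight half, giving $\mathring\pi_n(\Vec 0) = 1/2^{n-1}$, so $p(n)=\Theta(2^{-n})$. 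Since the chain is vertex-transitive there is nothing subtle here.

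Next I would prove $\mathcal{D}_{\rm C}$ is LHaGG by a coupling, exactly parallel to Lemma~\ref{lem:lhagg-karytree} and Lemma~\ref{lem:lhagg-box}: for $f,g$ with $\sum_{i\le n}f(i)\le\sum_{i\le n}g(i)$, let $X_t$ follow $(f,G,P)$ and $Y_t$ follow $(g,G,P)$ from the common start $\Vec 0$, and track the Hamming weights $|X_t|=\|X_t\|_1$ and $|Y_t|=\|Y_t\|_1$, maintaining the invariant $|X_t|\ge|Y_t|$. When both walks are in phase $n_X$ resp.\ $n_Y$ with $n_X\ge n_Y$ (this follows from $\sum f\le\sum g$), a step of $X$ picks a coordinate uniformly in $[n_X]$ and flips it; from weight $w$ it moves to $w+1$ with probability $(n_X-w)/n_X$ and to $w-1$ with probability $w/n_X$. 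The key monotonicity is that the downward probability $w/n$ is \emph{decreasing} in $n$ for fixed $w$, so the ``faster-growing'' walk $X$ is less likely to decrease its weight; combined with $|X_t|\ge|Y_t|$ (so $X$ starts from a weakly larger weight, where the upward drift is only stronger on the larger ambient dimension) one can couple the two weight chains so that $|X_{t+1}|\ge|Y_{t+1}|$. As in the tree case, when the gap $|X_t|-|Y_t|$ is already $\ge 2$ a one-step change of $\pm1$ cannot violate the invariant, and when $|X_t|=|Y_t|=w$ one compares the two birth–death kernels directly; a standard monotone coupling of the two one-dimensional chains does the job. Consequently $X_t=\Vec 0\Rightarrow Y_t=\Vec 0$, i.e.\ $R(t)\le R'(t)$, which is LHaGG.

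For the mixing time I would quote the classical bound $\tau_n(\epsilon)=O(n\log n + n\log\epsilon^{-1})$ for the random walk on $\{0,1\}^n$ (obtainable by the coupon-collector coupling), so the even mixing time satisfies $\mathring\tau_n(\epsilon)=O(n\log n+n\log\epsilon^{-1})$. Then $\mathring{\mathfrak t}(n)=\mathring\tau_n(p(n-1))=O(n\log n + n\cdot n)=O(n^2)$, hence $\sum_n \mathring{\mathfrak t}(n)\,p(n-1)=O\!\big(\sum_n n^2 2^{-n}\big)<\infty$; this makes the second term of \eqref{eq:trans} controlled by $\sum_n \mathfrak d(n)2^{-n}$ up to a convergent series. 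Now Lemma~\ref{lem:rec} with $p(n)=\Theta(2^{-n})$ (noting $\sum_n p(n)<\infty$, so $\sum_n(\mathfrak d(n)-1)p(n)$ and $\sum_n\mathfrak d(n)p(n)$ differ by a finite constant) gives recurrence when $\sum_n \mathfrak d(n)2^{-n}=\infty$, and Lemma~\ref{lem:trans} gives transience when $\sum_n \mathfrak d(n)2^{-n}<\infty$. The main obstacle I anticipate is the coupling step establishing LHaGG: unlike the tree, the per-step transition probabilities on $\{0,1\}^n$ depend on the current Hamming weight \emph{and} on the ambient dimension $n$ simultaneously, so one must check carefully that the natural monotone coupling of the two weight-valued birth–death chains (with different state-dependent rates and different boundaries $n_X\ne n_Y$) really preserves $|X_t|\ge|Y_t|$ in every case — in particular the boundary case where $Y$ has already reached weight $n_Y<n_X$ and is forced to step down while $X$ need not. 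This is where the bulk of the care goes; everything else is bookkeeping against Lemmas~\ref{lem:rec} and~\ref{lem:trans}.
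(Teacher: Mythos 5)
Your proposal is correct and follows essentially the same route as the paper: a monotone coupling of the Hamming-weight chains (using that the downward probability $w/n$ is decreasing in $n$) to establish LHaGG, the value $p(n)=2^{-n+1}$, the $O(n\log(n/\epsilon))$ mixing bound making $\sum_n \mathring{\mathfrak{t}}(n)p(n-1)$ convergent, and then Lemmas~\ref{lem:rec} and~\ref{lem:trans}. The boundary case you flag as the main worry (when $Y$ sits at the all-ones vertex of its smaller cube) is in fact already covered by the general formula $h/n'_t \geq h/n_t$, so no separate case analysis is needed there.
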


The following lemma is not very difficult, but nontrivial. 
\begin{lemma}\label{lem:lhagg-cube}
${\cal D}_{\rm C}$ is LHaGG. 
\end{lemma}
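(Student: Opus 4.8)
The plan is to mimic the coupling proof of Lemma~\ref{lem:lhagg-karytree}, replacing the height function $h$ on the tree by the Hamming weight $\|\cdot\|_1$ on $\{0,1\}^n$. Let $f,g$ satisfy $\sum_{i=1}^n f(i)\le\sum_{i=1}^n g(i)$ for all $n$, and let $X_t$, $Y_t$ follow $(f,G,P)$ and $(g,G,P)$ respectively with $X_0=Y_0=\Vec 0$. Writing $a(t)=\|X_t\|_1$ and $b(t)=\|Y_t\|_1$ for the current Hamming weights, the goal is to construct a coupling of $\Vec X$ and $\Vec Y$ with $a(t)\ge b(t)$ for all $t$; since $\Vec 0$ is the unique vertex of weight $0$, this immediately yields $\Pr[X_t=\Vec 0]\le\Pr[Y_t=\Vec 0]$, i.e.\ LHaGG. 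Note the dimension of the cube containing $X_t$ is always at least that containing $Y_t$ by the hypothesis on $f,g$; call these $n_X\ge n_Y$.

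The coupling is by induction on $t$. First, as in the tree case, if $a(t)>b(t)$ then in fact $a(t)\ge b(t)+2$ because each step changes a Hamming weight by exactly $\pm1$ and the process has period $2$, so $a(t+1)\ge a(t)-1\ge b(t)+1\ge b(t+1)$ regardless of how we couple; here we may let the two walks move independently. The substantive case is $a(t)=b(t)=:w$. When $w=0$, both walks are forced to increase their weight to $1$, so any coupling keeps them equal. When $0<w<n_Y$ (so both walks sit at an interior weight of both cubes), the walk $X_t$ increases its weight with probability $(n_X-w)/n_X$ and decreases it with probability $w/n_X$, while $Y_t$ has the corresponding probabilities $(n_Y-w)/n_Y$ and $w/n_Y$. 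Since $w/n_X\le w/n_Y$ (as $n_X\ge n_Y$), the probability that $X$ decreases is at most the probability that $Y$ decreases, so we may couple monotonically: whenever $Y$ increases, $X$ increases; this gives $a(t+1)\ge b(t+1)$. The remaining boundary cases, $w=n_Y<n_X$ and $w=n_Y=n_X$, are handled exactly as cases (iii)--(iv) in Lemma~\ref{lem:lhagg-karytree}: if $w=n_Y$ then $Y_t$ is forced to decrease to $n_Y-1$, while $X_t$ either decreases (when $w=n_X$ too, also forced) or decreases with some probability $<1$ (when $w<n_X$), so in all cases $a(t+1)\ge b(t+1)$.

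Patching these cases together gives a coupling with $a(t)\ge b(t)$ for all $t\ge0$, which proves the lemma. The main obstacle, and the reason the authors flag it as ``not very difficult, but nontrivial,'' is checking that the monotone coupling is actually realizable at the level of the walks themselves rather than just the weight chains: one must verify that the ``increase'' and ``decrease'' moves available to $X$ and $Y$ can be matched consistently so that the weights evolve as claimed, which is automatic here since at a fixed Hamming weight $w$ every neighbor of larger weight is as good as every other (the walk is symmetric in the coordinates), so we only need to track the one-dimensional weight process $(a(t),b(t))$ and the weight transition probabilities displayed above. One should also note that $P_n$ is irreducible, reversible (it is a simple random walk on the bipartite graph $C_n$), with ${\rm period}(P_n)=2$, and $\mathring\pi_n(\Vec 0)=1/2^{n-1}$, so that Lemmas~\ref{lem:rec} and~\ref{lem:trans} (with the standard $\Order(n\log n)$ mixing time of the hypercube walk) will then deliver Theorem~\ref{thm:cube}; but that is the job of the subsequent proof, not of this lemma.
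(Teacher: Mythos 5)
Your proposal is correct and follows essentially the same route as the paper: a monotone coupling of the Hamming-weight chains, using $n_X \ge n_Y$ to compare the decrease probabilities $w/n_X \le w/n_Y$, together with the parity observation that $a(t)>b(t)$ forces $a(t)\ge b(t)+2$. The only cosmetic difference is that you treat the boundary weights $w=0$ and $w=n_Y$ as separate cases, whereas the paper absorbs them into the single formula $\Pr[\text{decrease}]=h/n$; your remark that coordinate symmetry lets one lift the weight coupling back to the walks is a point the paper leaves implicit.
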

\begin{proof}
 Let $f$ and $g$ satisfy $\sum_{i=1}^n f(i) \leq \sum_{i=1}^n g(i)$ for any $n=1,2,\ldots$, and 
  let $X_t$ and $Y_t$ ($t=0,1,2,\ldots$) respectively follow $(f,G,P)$ and $(g,G,P)$, 
  i.e., the box of $(f,G,P)$ grows faster than $(g,G,P)$. 
 Let $n_t$ (resp.\ $n'_t$) denote the dimension of $(f,G,P)$ (resp.\ $(g,G,P)$) at time $t$, 
  and then notice that $n_t \geq n_t'$ hold for any $t=0,1,\ldots$ by the assumption that $(f,G,P)$ grows faster. 
 Let $X_0 =Y_0 =\Vec{0}$, and 
  we prove $\Pr[X_t = \Vec{0}] \leq \Pr[Y_t = \Vec{0}]$ for any $t=1,2,\ldots$. 

 Let $h(\Vec{u}) = |\{ i \in \{1,\ldots,n\} \mid u_i =1\}|$ for $\Vec{u}=(u_1,\ldots,u_n) \in V_n$. 
 We construct a coupling of $\Vec{X} = \{X_t\}_{t \geq 0}$ and  $\Vec{Y} = \{Y_t\}_{t \geq 0}$ 
  such that 
   $h(X_t) \geq h(Y_t)$ holds for any $t=1,2,\ldots$. 
 The proof is an induction concerning $t$. 
 Clearly, $h(X_0) = h(Y_0)=0$. 
 Inductively assuming $h(X_t) \geq h(Y_t)$, we prove  $h(X_{t+1}) \geq h(Y_{t+1})$.  
 If $h(X_t) > h(Y_t)$ then $h(X_t) \geq h(Y_t)-2$ 
  since every $P_n$ is ${\rm period}(P_n)=2$ for $n=1,2,\ldots$. 
 It is easy to see that 
  $h(X_{t+1}) \geq h(X_t)-1 \geq h(Y_t)+1 \geq h(Y_{t+1})$, and we obtain $h(X_{t+1}) \geq h(Y_{t+1})$ in the case. 
 Suppose $h(X_t)=h(Y_t)$. 
 Then, 
\begin{align*}
\Pr[h(X_{t+1}) = h(X_t)-1] &= \frac{h(X_t)}{n_t} \leq \frac{h(Y_t)}{n'_t} =  \Pr[h(Y_{t+1}) = h(Y_t)-1], \hspace{4em} \mbox{and} \\
\Pr[h(X_{t+1}) = h(X_t)+1] &= 1-\frac{h(X_t)}{n_t} \geq 1-\frac{h(Y_t)}{n'_t} = \Pr[h(Y_{t+1}) = h(Y_t)+1]
\end{align*}
 hold, which implies that a coupling exists such that $h(X_{t+1}) \geq h(Y_{t+1})$. 

 Now we obtain a coupling of $\Vec{X} = \{X_t\}_{t \geq 0}$ and  $\Vec{Y} = \{Y_t\}_{t \geq 0}$ satisfying $h(X_t) \geq h(Y_t)$ for any $t=1,2,\ldots$, 
  which implies that $h(Y_t)=0$ as long as $h(X_t) = 0$. 
 This means that $\Pr[X_t = \Vec{0}] \leq \Pr[Y_t= \Vec{0}]$ for any $t=1,2,\ldots$.  
 We obtain the claim. 
\end{proof}

The following two lemmas are well known. 
\begin{lemma}\label{lem:mixing-cube}
Let $\mathring{\tau}_n(\epsilon)$ denote the mixing time of $P_n$. 
Then, $\mathring{\tau}_n(\epsilon) =\Order( n\log (n/\epsilon))$. 
\end{lemma}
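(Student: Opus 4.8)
The plan is to give a self‑contained proof by constructing an explicit coupling of two copies of the walk on $C_n$ and controlling the coupling time by a one‑line contraction estimate on the expected number of disagreeing coordinates. Let $(X_t)_{t\ge0}$ and $(Y_t)_{t\ge0}$ be two copies of the walk governed by $P_n$, with $X_0=\Vec{u}$ an arbitrary point of $U_n$ (the vectors of even Hamming weight, on which $\mathring{\pi}_n$ is the uniform distribution) and $Y_0$ distributed as $\mathring{\pi}_n$; since $\mathring{\pi}_n P_n^2=\mathring{\pi}_n$, the law of $Y_{2t'}$ is $\mathring{\pi}_n$ for every $t'$. Write $D_t=\{i:X_{t,i}\neq Y_{t,i}\}$, so that $|D_t|=\|X_t-Y_t\|_1$. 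The structural point that makes everything work is that $|D_t|$ is always \emph{even}: indeed $\|X_t-Y_t\|_1\equiv \|X_t\|_1+\|Y_t\|_1 \pmod{2}$, and since both chains start at a point of even weight and each step of $P_n$ changes the Hamming weight by $\pm1$, both $\|X_t\|_1$ and $\|Y_t\|_1$ have the parity of $t$. This is precisely what removes the parity obstruction that would otherwise defeat a one‑coordinate‑at‑a‑time coupling of a non‑lazy (period‑$2$) walk.

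Now the coupling. At each step write $[n]=A_t\sqcup D_t$ with $A_t=\{i:X_{t,i}=Y_{t,i}\}$. With probability $|A_t|/n$ pick $i$ uniformly in $A_t$ and flip coordinate $i$ in \emph{both} chains; with probability $|D_t|/n$ pick an ordered pair $(i,j)$ of distinct coordinates with $i$ uniform in $D_t$ and $j$ uniform in $D_t\setminus\{i\}$, and flip coordinate $i$ in $X$ only and coordinate $j$ in $Y$ only. One checks immediately that each chain, marginally, flips a uniformly random coordinate of $[n]$, so each marginal is a genuine $P_n$‑step; that the first kind of move leaves $D_t$ unchanged; and that after the second kind of move both $i$ and $j$ have become agreeing coordinates while no other coordinate has changed, so $D_{t+1}=D_t\setminus\{i,j\}$. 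Hence, conditionally on $D_t$, we have $|D_{t+1}|=|D_t|-2$ with probability $|D_t|/n$ and $|D_{t+1}|=|D_t|$ otherwise, and once $D_t=\emptyset$ the two chains coincide forever.

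Taking expectations in this recursion, $\E[\,|D_{t+1}|\mid D_t\,]=|D_t|\bigl(1-\tfrac2n\bigr)$, so $\E[\,|D_t|\,]\le|D_0|\bigl(1-\tfrac2n\bigr)^t\le n\,\e^{-2t/n}$. Since $|D_t|$ is a nonnegative integer, the coupling inequality gives, for every even time $2t'$,
\begin{align*}
\dtv\!\left(P_n^{2t'}(\Vec{u},\cdot),\mathring{\pi}_n\right)\le \Pr[X_{2t'}\neq Y_{2t'}]=\Pr[|D_{2t'}|\ge 1]\le \E[\,|D_{2t'}|\,]\le n\,\e^{-4t'/n},
\end{align*}
which is at most $\epsilon$ as soon as $2t'\ge \tfrac n2\ln(n/\epsilon)$. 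Choosing $2t'$ to be the least even integer with this property yields $\mathring{\tau}_n(\epsilon)\le \tfrac n2\ln(n/\epsilon)+2=\Order\!\left(n\log(n/\epsilon)\right)$, as claimed.

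There is no serious obstacle here; the one delicate point is exactly the one flagged above: because the walk is non‑lazy with period $2$, the naive coupling that flips the same coordinate in both chains never erases a disagreement, and it is the evenness of $|D_t|$ that lets us eliminate disagreements two at a time. As alternatives one could avoid coupling entirely and read the bound off the eigenvalues $(1-2k/n)^2$ of $P_n^2$ restricted to $U_n$ via the standard $L^2$ estimate $4\,\dtv\!\left(P_n^{2t}(\Vec{u},\cdot),\mathring{\pi}_n\right)^2\le\sum_{k=1}^{n-1}\binom{n}{k}(1-2k/n)^{4t}$, or simply cite the classical analysis of this (Ehrenfest) walk in \cite{LevinPeres}.
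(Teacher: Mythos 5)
Your proof is correct, and it is worth noting that the paper itself supplies no argument for this lemma at all --- it simply declares it ``well known'' and moves on, presumably with the classical analysis of the Ehrenfest/hypercube walk in \cite{LevinPeres} in mind. What you have done differently is give a complete, self-contained coupling proof that is adapted to the \emph{non-lazy}, period-$2$ walk, which is exactly the setting the paper needs for its even mixing-time $\mathring{\tau}_n$: the textbook treatment usually works with the lazy walk, where the same-coordinate coupling succeeds because laziness breaks the parity obstruction, and transferring that bound to $P_n^2[U_n]$ requires at least a remark. Your observation that $|D_t|$ is always even (both walks preserve the parity of the Hamming weight step by step, and both start in $U_n$), so that disagreements can be erased two at a time by the cross-coordinate move, is precisely the right fix, and the marginal check, the contraction $\E[|D_{t+1}|\mid D_t]=|D_t|(1-\tfrac2n)$, and the coupling inequality all go through as you state (the chains do coalesce permanently once $D_t=\emptyset$). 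The resulting bound $\mathring{\tau}_n(\epsilon)\le \tfrac n2\ln(n/\epsilon)+O(1)$ is slightly sharper than what the lemma asserts. The only cosmetic caveat is the degenerate case $n=1$ (where $1-\tfrac2n<0$), which is vacuous since then $U_1=\{\Vec 0\}$ and $D_0=\emptyset$. Your eigenvalue alternative via $P_n^2$ restricted to $U_n$ is also a legitimate route and closer to what the citation to \cite{LevinPeres} implicitly invokes; the coupling version has the advantage of matching the comparison-based spirit of the rest of the paper.
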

\begin{lemma}\label{lem:pn-cube}
$p(n) = \frac{1}{\frac{2^n}{2}} = 2^{-n+1}$.
\end{lemma}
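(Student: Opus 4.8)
The plan is to compute $p(n) = \mathring{\pi}_n(\Vec{0})$ directly, by first pinning down the even-time stationary distribution of $P_n$ and then counting the size of its support. Since $P_n$ is the simple random walk on the $n$-regular graph $C_n$, the transition matrix is symmetric, i.e.\ $P_n(\Vec{u},\Vec{v}) = P_n(\Vec{v},\Vec{u})$. Hence the detailed balance equation $\phi(\Vec{u})P_n(\Vec{u},\Vec{v}) = \phi(\Vec{v})P_n(\Vec{v},\Vec{u})$ holds with the constant weight $\phi \equiv 1$, so $P_n$ is reversible and its (full) stationary distribution $\pi_n$ is uniform over $V_n = \{0,1\}^n$. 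This parallels Lemma~\ref{lem:pn-box}, with the simplification that the cube is genuinely $n$-regular (unlike the reflecting box), which is exactly why $\phi$ can be taken constant here.

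Next I would identify the bipartite structure and the even part. The Hamming-weight function $h(\Vec{v}) = |\{i : v_i = 1\}|$ flips parity at every step, so $P_n$ is irreducible with ${\rm period}(P_n)=2$, and $C_n$ is bipartite with parts $U_n = \{\Vec{v} : h(\Vec{v})\text{ even}\}$ and $\overline{U}_n = \{\Vec{v} : h(\Vec{v})\text{ odd}\}$; moreover $\Vec{0} \in U_n$ since $h(\Vec{0})=0$. Applying the framework of Section~\ref{sec:term2} exactly as in the box case \eqref{eq:stationary-box} (constant $\phi$), the even-time stationary distribution is the uniform distribution on $U_n$, namely $\mathring{\pi}_n(\Vec{v}) = 1/|U_n|$ for $\Vec{v}\in U_n$. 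Equivalently, $\mathring{\pi}_n$ is the conditional of $\pi_n$ on the even part, $\mathring{\pi}_n(\Vec{v}) = \pi_n(\Vec{v})/\pi_n(U_n)$, which one checks satisfies $\mathring{\pi}_n P_n^2 = \mathring{\pi}_n$ and is the unique such even-time distribution.

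Finally I would count $|U_n| = \sum_{k\text{ even}} \binom{n}{k} = 2^{n-1}$, the elementary identity obtained by adding the binomial expansions of $(1+1)^n = 2^n$ and $(1-1)^n = 0$, since $1 + (-1)^k$ equals $2$ for even $k$ and $0$ for odd $k$. Substituting $\Vec{0} \in U_n$ then yields $p(n) = \mathring{\pi}_n(\Vec{0}) = 1/|U_n| = 1/2^{n-1} = 2^{-n+1}$, and the displayed form $1/(2^n/2)$ merely records that $|U_n| = 2^n/2$. There is no real obstacle here, as the statement is a well-known fact; the only two points meriting a line of justification are that the even-time stationary distribution of a reversible period-$2$ chain is the restriction of $\pi_n$ to the even part (immediate from symmetry of $P_n$ and uniqueness of the even-time stationary distribution), and the parity count $\sum_{k\text{ even}}\binom{n}{k}=2^{n-1}$.
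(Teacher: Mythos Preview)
Your proposal is correct. The paper does not actually supply a proof of this lemma; it simply declares Lemmas~\ref{lem:mixing-cube} and \ref{lem:pn-cube} to be ``well known'' and moves on. Your argument---symmetry of $P_n$ giving the uniform stationary measure, the parity-of-Hamming-weight bipartition identifying $U_n$, and the standard count $|U_n|=2^{n-1}$---is exactly the routine verification one would write down, and it matches the framework the paper uses for the box case (Lemma~\ref{lem:pn-box} and \eqref{eq:stationary-box}) with the simplification you note that $\phi\equiv 1$.
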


\begin{proof}[Proof of Theorem~\ref{thm:cube}]

(Recurrence) 
By Lemma~\ref{lem:lhagg-cube}, ${\cal D}_{\rm C}$ is LHaGG. 
 Since $p(n) = 2^{-n+1}$ by Lemma~\ref{lem:pn-cube}, 
Lemma~\ref{lem:rec} implies that 
if $\sum_{n=1}^{\infty}\frac{\mathfrak{d}(n)}{2^n}=\infty$ then $\Vec{0}$ is recurrent. 

(Transience)
By Lemma~\ref{lem:mixing-cube}, 
$\mathring{\mathfrak{t}}(n) = \mathring{\mathfrak{\tau}}_n (p(n-1)) \leq n \log \frac{n}{p(n-1)} \leq n \log (n 2^n) \leq c' n^2 \log n$, and hence 
$\sum_{n=1}^{\infty} \mathring{\mathfrak{t}}(n) p(n-1) \leq  \sum_{n=1}^{\infty} c'n^2 \log n \frac{1}{2^{n-2}} < \infty$. 
If $\sum_{n=1}^{\infty} \frac{\mathfrak{d}(n)}{2^n} < \infty$, 
 then $\sum_{n=1}^{\infty} \max\{ \mathfrak{d}(n),  \mathring{\mathfrak{t}}(n)\} p(n-1) \leq \sum_{n=1}^{\infty} (\mathfrak{d}(n) + \mathring{\mathfrak{t}}(n)) \frac{1}{2^n}  < \infty$, 
 which implies $\sum_{t=1}^{\infty} R(t) < \infty$ by Lemma~\ref{lem:trans} with Lemma~\ref{lem:lhagg-cube}. 
\end{proof}

\subsubsection{An Interesting fact: every  finite point becomes recurrent}
 We can easily observe the following fact from Theorem~\ref{thm:cube}. 
\begin{corollary}\label{cor:cube}
  If $\mathfrak{d}(n) = \Omega(2^n/n)$ then $\Vec{0}$ is recurrent. 
  If $\mathfrak{d}(n) = \Order(2^n/n^{1+\epsilon})$ then $\Vec{0}$ is transient. 
\end{corollary}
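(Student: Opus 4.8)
The plan is to read both statements straight off Theorem~\ref{thm:cube}, which tells us that $\Vec{0}$ is recurrent by $\mathcal{D}_{\rm C}$ precisely when $\sum_{n=1}^{\infty} \mathfrak{d}(n)/2^{n} = \infty$. So no new probabilistic input is needed: the whole argument is a one-line series comparison under each growth hypothesis on $\mathfrak{d}$, exactly mirroring the computation in the proof of Corollary~\ref{cor:simple-karytree}.

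\emph{Recurrence.} First I would assume $\mathfrak{d}(n) \geq c\, 2^{n}/n$ for some constant $c>0$ and all sufficiently large $n$ (this is the meaning of $\mathfrak{d}(n) = \Omega(2^{n}/n)$). Then, for the appropriate $n_0$,
\begin{align*}
\sum_{n=1}^{\infty} \frac{\mathfrak{d}(n)}{2^{n}} \;\geq\; \sum_{n\geq n_0} \frac{c\,2^{n}/n}{2^{n}} \;=\; c \sum_{n \geq n_0} \frac{1}{n} \;=\; \infty ,
\end{align*}
since the harmonic series diverges, and Theorem~\ref{thm:cube} yields that $\Vec{0}$ is recurrent.

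\emph{Transience.} Next I would assume $\mathfrak{d}(n) \leq c'\, 2^{n}/n^{1+\epsilon}$ for some constants $c'>0$ and $\epsilon>0$. Then
\begin{align*}
\sum_{n=1}^{\infty} \frac{\mathfrak{d}(n)}{2^{n}} \;\leq\; \mathfrak{d}(1) + c'\sum_{n\geq 2} \frac{1}{n^{1+\epsilon}} \;\leq\; \mathfrak{d}(1) + c'\int_{1}^{\infty} \frac{\mathrm{d}x}{x^{1+\epsilon}} \;=\; \mathfrak{d}(1) + \frac{c'}{\epsilon} \;<\; \infty ,
\end{align*}
so $\sum_{t} R(t) < \infty$ by Theorem~\ref{thm:cube}, i.e.\ $\Vec{0}$ is transient.

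There is no genuine obstacle here; the content is entirely carried by Theorem~\ref{thm:cube}. The only point requiring a little care is that the asymptotic symbols $\Omega(\cdot)$ and $\Order(\cdot)$ are statements about $n\to\infty$, so the finitely many initial terms do not affect convergence or divergence of the tail — in the transience case this is why splitting off the $\mathfrak{d}(1)$ term is harmless, and one uses implicitly that $\mathfrak{d}(n)$ is finite for each $n$.
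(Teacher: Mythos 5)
Your proposal is correct and matches the paper's intent exactly: the paper states Corollary~\ref{cor:cube} as an immediate observation from Theorem~\ref{thm:cube}, and your series comparisons (harmonic divergence for recurrence, $\sum n^{-1-\epsilon}<\infty$ for transience) are precisely the omitted routine verification, in the same style as the paper's proof of Corollary~\ref{cor:simple-karytree}.
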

 Notice that the maximum degree of $G(n)$ is unbounded asymptotic to $n$, clearly. 
 Nevertheless, 
  we can see the following interesting fact. 
\begin{proposition}\label{prop:cube-everyv}
 If $\mathfrak{d}(n) = \Omega(n 2^n )$ 
  then $\mathcal{D}_{\rm C}$ starting from $\Vec{0}$ visits $\Vec{v} \in V_m$ infinitely many times for any $m<\infty$. 
\end{proposition}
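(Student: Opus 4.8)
The plan is to fix $m<\infty$ and a target vertex $\Vec{v}\in V_m$, and show that $\mathcal{D}_{\rm C}$ visits $\Vec{v}$ infinitely often almost surely. The first step is to upgrade the recurrence statement of Theorem~\ref{thm:cube} (which concerns only the origin) into an almost-sure statement: since the walk is a genuine (time-inhomogeneous) Markov chain and the events ``return to $\Vec{0}$ in phase $n$'' are, conditionally on the state at the start of each phase, not too degenerate, I would use a conditional Borel--Cantelli argument. Concretely, with $\mathfrak{d}(n)=\Omega(n2^n)$ the mixing-time estimate of Lemma~\ref{lem:mixing-cube} gives $\mathring{\tau}_n(p(n)/2)=\Order(n\log(n2^n))=\Order(n^2)=\order(\mathfrak{d}(n))$, so within each even-length phase the walk mixes to within $p(n)/2$ of the even-time stationary distribution $\mathring\pi_n$ on $\{0,1\}^n$, which is uniform on the even-weight vertices; hence from any starting point of the correct parity the walk spends $\Omega(\mathfrak{d}(n)\cdot p(n))=\Omega(n)$ expected steps at $\Vec{0}$ during phase $n$, and in fact hits $\Vec{0}$ during phase $n$ with probability bounded below by a constant (uniformly in $n$ and in the phase-start state). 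Summing over $n$ and using the conditional Borel--Cantelli lemma (the events being determined phase-by-phase) yields that $\Vec{0}$ is visited infinitely often almost surely.

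The second step transfers this from $\Vec{0}$ to an arbitrary $\Vec{v}\in V_m$. The key point is that once $n\geq m$, the vertex $\Vec{v}$ (padded with zeros in coordinates $m+1,\dots,n$) is an even-weight vertex of $C_n$ iff $\|\Vec v\|_1$ is even, and it lies at graph-distance $\|\Vec v\|_1\le m$ from $\Vec{0}$ in $C_n$. By the mixing estimate above, during phase $n$ (for $n$ large) the walk's distribution at even times is within $p(n)/2$ of uniform on the $2^{n-1}$ even-weight vertices, so $\Pr[X_t=\Vec v]\ge \tfrac12 p(n)=2^{-n}$ for the appropriate times $t$ in phase $n$; since there are $\Omega(\mathfrak d(n))=\Omega(n2^n)$ such times, the walk hits $\Vec v$ during phase $n$ with probability bounded below by a positive constant, uniformly in $n$ and in the state at the start of the phase. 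Apply the conditional Borel--Cantelli lemma exactly as before. If $\|\Vec v\|_1$ is odd, one instead runs the same argument on odd times within each phase (the odd-time distribution converges to uniform on the odd-weight vertices by the same reasoning applied to $P_n$ shifted by one step), so $\Vec v$ is still visited infinitely often. Since $m$ and $\Vec v$ were arbitrary, and a countable intersection of almost-sure events is almost sure, the walk visits every vertex of every $V_m$ infinitely often.

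The main obstacle is making the conditional Borel--Cantelli step rigorous in the time-inhomogeneous setting: one needs the ``hit $\Vec v$ in phase $n$'' probability to be bounded below by a constant \emph{regardless of where the walk is at the beginning of phase $n$}, and this is exactly what the mixing-time bound $\mathring\tau_n=\order(\mathfrak d(n))$ buys us — after the mixing portion of the phase the conditional law of the walk is close to $\mathring\pi_n$ independently of the phase-start state, and then there is still $\Omega(\mathfrak d(n)-\mathring\tau_n)=\Omega(n2^n)$ time left to accumulate a constant hitting probability for the weight-$2^{-n}$ target $\Vec v$. A minor technical wrinkle is parity bookkeeping (the walk at a given global time $t$ sits on even- or odd-weight vertices according to $t\bmod 2$, and $\Vec v$ must be reached at a time of matching parity), which is handled by choosing, within each phase, the subsequence of times of the correct parity; there are still $\Omega(n2^n)$ of them. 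No new structural input beyond Lemmas~\ref{lem:mixing-cube} and \ref{lem:pn-cube} and the uniformity of $\mathring\pi_n$ is needed.
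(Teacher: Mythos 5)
Your overall architecture is the same as the paper's: use the mixing-time bound of Lemma~\ref{lem:mixing-cube} to show that, uniformly over the state at the start of phase $n$, the walk hits $\Vec{v}$ during phase $n$ with probability $q_n$ satisfying $\sum_n q_n=\infty$, then conclude by a conditional Borel--Cantelli / product argument over phases; the parity bookkeeping for odd-weight $\Vec{v}$ is handled the same way in both. The one step you should not let stand as written is the inference ``there are $\Omega(n2^n)$ times in the phase at each of which $\Pr[X_t=\Vec{v}]\geq 2^{-n}$, hence the hitting probability is bounded below by a constant.'' That only bounds the \emph{expected number} of visits below by $\Omega(n)$, which does not by itself lower-bound the hitting probability: you would additionally need an upper bound on the expected number of returns to $\Vec{v}$ within the phase given one visit, i.e.\ a second-moment argument. (For the hypercube this return count works out to $\Order(n)$, so your constant is in fact correct, but it requires that extra computation, which you neither state nor perform.) The paper sidesteps this by chopping the phase into $\mathfrak{d}(n)/\mathring{\tau}_n=\Omega\!\left(2^n/(n\log n)\right)$ blocks of length $\mathring{\tau}_n$, each of which, regardless of its starting state, ends at $\Vec{v}$ with probability at least $2^{-n-1}$; this yields only $q_n=\Omega(1/(n\log n))$ per phase, but since $\sum_n 1/(n\log n)=\infty$ that is all the Borel--Cantelli step needs. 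With that repair your proof goes through. A small point in your favour: your conditional Borel--Cantelli step delivers ``infinitely many phases contain a visit'' directly, whereas the paper derives only ``at least one visit almost surely'' from $\prod_n(1-q_n)=0$ and then invokes the vertex transitivity of the hypercube to restart the argument from $\Vec{v}$; your route avoids that extra appeal to symmetry.
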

\begin{proof}
 Notice that $\mathring{\tau}_n(2^{-n-1}) = \Order (n \log (n 2^{n+1})) = \Order(n^2 \log n)$ by Lemma~\ref{lem:mixing-cube}. 
 Thus, in the $n$-th phase, i.e, $[T_{n-1},T_n]$, 
  the random walk $X$ visits $\Vec{v} \in V_n$ with probability at least $2^{-n-1}$ in every $\Order(n^2 \log n)$ steps 
   (even if $\Vec{v} \in \overline{U}_n$, here we omit the proof). 
 Thus the probability that $X$ never visit $\Vec{v}$ during the $n$-th phase is at most 
  $(1-2^{-n-1})^{n 2^{n+1}/n^2 \log n} \leq \exp(-\frac{1}{n \log n})$. 
 This implies that the probability that $X$ never visits $\Vec{v} \in V_m$ forever is at most 
  $\prod_m^{\infty} \exp(-\frac{1}{n \log n}) 
    = \exp(- \sum_{n=m}^{\infty} \frac{1}{n \log n}) 
    \leq \exp (- \int_m^{\infty} \frac{1}{x \log x} {\rm d}x) 
    = \exp(-[\log \log x]_m^{\infty}) = \exp(-\infty) = 0$. 
 This means that the RWoGG $X$ visits $\Vec{v} \in V_m$ at least once in finite steps with probability 1. 

 Once we know that $X$ visits $\Vec{v}$ in a finite steps, 
  the claim is trivial thanks to the vertex transitivity of the hypercube skeleton. 
\end{proof}
 We think that the hypothesis of Proposition~\ref{prop:cube-everyv} 
   can be relaxed from $\Omega(n 2^n)$ to $\Omega(2^n/n)$, but we are not sure.

\section{Simple RW on a Growing Tree with Unbounded Degrees}\label{sec:tree}
%Some readers may feel complete $k$-ary tree is too specific. 
This section shows some more easy examples  of random walks on growing trees being LHaGG. 

\subsection{Simple random walk growing level trees}
  Let $\tree_n = (V_n,E_n)$ be a tree for $n=1,2,\ldots$, 
    where we assume $V_n \subset V_{n+1}$  and $E_n \subset E_{n+1}$. 
 We call $\tree_n$  a {\em level tree} if 
%  (i) 
   $\deg_n(u) = \deg_n(v)$ holds whenever $h(u) = h(v)$. 
We remark the condition implies    
%  (ii) 
    all leaves place the same height 
      because $\deg_n(v)=1$ if and only if $v$ is a leaf.
 We remark that $\deg_n(v)$ can be unbounded asymptotic to $n$.  
 Let $r \in V_n$ denote the root, that is the unique vertex of height 0.  
 For convenience, let $h(v)$ denote the height of vertex $v \in V_n$, 

%%%%
The transition probability is given by 
\begin{align} 
P_n(u,v) = %\frac{1}{\deg_n(u)}
\begin{cases}
 \gamma & \text{if $v =v$,} \\
 (1-\gamma)\dfrac{1}{\deg_n(u)} &\text{if $\{u,v\} \in E$,} \\
 0 & \text{otherwise,}
\end{cases}
%\label{eq:comp-tree1}
\end{align}
   for $u,v \in V_n$. 
 We say the random walk is \emph{lazy} when $\gamma > 0$, otherwise, i.e., $\gamma = 0$, \emph{busy}. 
 Clearly, a lazy random walk is aperiodic, while a busy random walk has period 2. 
 We are concerned with ${\cal D}_{\rm T'} = (\mathfrak{d},G,P)$, 
 where $G(n)=G_n$ and $P(n) = P_n$, 
  considering cases $\gamma = 0$ or not. 

\subsection{``Busy'' walk}
Here, we are concerned with the case $\gamma=0$. 
\begin{lemma}
If $\gamma = 0$ 
then $\mathcal{D}_{\mathrm{T}'}$ is LHaGG. 
\end{lemma}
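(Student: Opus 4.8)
The plan is to mimic the coupling argument of Lemma~\ref{lem:lhagg-karytree}. Let $f,g$ satisfy $\sum_{i=1}^n f(i) \le \sum_{i=1}^n g(i)$ for all $n$, and let $X_t$ and $Y_t$ follow $(f,G,P)$ and $(g,G,P)$ respectively with $X_0=Y_0=r$; write $n^X_t \ge n^Y_t$ for the phases (heights of the current trees) at time $t$, the inequality holding because $(f,G,P)$ grows faster. I will build a coupling of $\{X_t\}$ and $\{Y_t\}$ with $h(X_t) \ge h(Y_t)$ for every $t$. This forces $h(Y_t)=0$ whenever $h(X_t)=0$, i.e.\ $Y_t=r$ whenever $X_t=r$, hence $\Pr[X_t=r] \le \Pr[Y_t=r]$ for all $t$, which is exactly the LHaGG property of $\mathcal{D}_{\rm T'}$.

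The key structural fact is that on a level tree the height of a busy walk evolves, within a phase, as a birth--death chain whose one-step law depends only on the current height and the current phase: a vertex $u$ with $0<h(u)<n$ has one parent and $\deg_n(u)-1$ children, so from height $h$ the walk steps to $h-1$ with probability $1/\deg_n(h)$ and to $h+1$ with probability $1-1/\deg_n(h)$, where $\deg_n(h)$ is the common degree of the height-$h$ vertices of $G_n$; the root always moves to height $1$, and a leaf (height $n$) always moves to height $n-1$. Moreover, since $E_n \subseteq E_{n+1}$ we have $\deg_n(h) \le \deg_{n+1}(h)$ for every fixed $h$, so $\deg_{n^X_t}(h) \ge \deg_{n^Y_t}(h)$; in other words, the probability of stepping toward the root is non-increasing in the phase.

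The induction on $t$ is then the same case analysis as in Lemma~\ref{lem:lhagg-karytree}. If $h(X_t)>h(Y_t)$, then since both walks have period $2$ the heights have equal parity, so $h(X_t) \ge h(Y_t)+2$ and $h(X_{t+1}) \ge h(X_t)-1 \ge h(Y_t)+1 \ge h(Y_{t+1})$. If $h(X_t)=h(Y_t)=:h$, distinguish: (i) $X_t=Y_t=r$, both deterministically move to height $1$; (ii) both internal non-root, where $\Pr[h(X_{t+1})=h-1]=1/\deg_{n^X_t}(h) \le 1/\deg_{n^Y_t}(h)=\Pr[h(Y_{t+1})=h-1]$, so a monotone coupling on the two-point set $\{h-1,h+1\}$ yields $h(X_{t+1}) \ge h(Y_{t+1})$; (iii) both leaves, which forces $n^X_t=n^Y_t=h$, and both deterministically move to $h-1$; (iv) $X_t$ internal but $Y_t$ a leaf (the reverse is impossible, as it would give $n^X_t=h<n^Y_t$), where $Y$ moves to $h-1$ with probability $1$ while $h(X_{t+1}) \ge h-1$ in any case, so again $h(X_{t+1}) \ge h(Y_{t+1})$. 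This completes the induction and hence the proof.

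I do not expect a genuine obstacle here; the only point requiring care is the claim that the one-step law of the height depends only on the pair (height, phase) and is monotone in the phase. This is precisely where the \emph{level tree} hypothesis (so that the height is itself a Markov chain) and the growth condition $E_n \subseteq E_{n+1}$ (so that $\deg_n(h)$ is non-decreasing in $n$) are used; without the level property the height would not be Markov and there would be nothing on which the coupling could act.
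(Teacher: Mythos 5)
Your proposal is correct and follows essentially the same route as the paper's own proof: the identical height-coupling induction with the same case split (both root, both internal, both leaves, $X$ internal / $Y$ leaf) and the same monotone two-point coupling in the internal case. The only difference is that you make explicit the two facts the paper uses implicitly — that the level-tree hypothesis makes the height process Markov, and that $E_n \subseteq E_{n+1}$ gives $\deg_n(h)$ non-decreasing in $n$ — which is a welcome clarification but not a different argument.
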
\label{lem:lhagg-leveltree-busy}
 The proof is almost the same as Lemma~\ref{lem:lhagg-karytree} for the complete $k$-ary tree. 
\begin{proof}
 Let $f$ and $g$ satisfy $\sum_{i=1}^n f(i) \leq \sum_{i=1}^n g(i)$ for any $n=1,2,\ldots$, and 
  let $X_t$ and $Y_t$ ($t=0,1,2,\ldots$) respectively follow $(f,G,P)$ and $(g,G,P)$, 
  i.e., the tree of $(f,G,P)$ grows faster than $(g,G,P)$. 
 Let $X_0 =Y_0 =r$, and 
  we prove $\Pr[X_t = r] \leq \Pr[Y_t = r]$ for any $t=1,2,\ldots$. 

 The proof is (a kind of) coupling, and 
  we prove $h(X_t) \geq h(Y_t)$ for any $t=1,2,\ldots$. 
 Consider the pair of transitions $X$ to $X'$ and $Y$ to $Y'$, for convenience of the notation.  
To begin with, we remark that $h(X) > h(Y)$ implies $h(X) \geq h(Y)-2$ 
  since every $P_n$ is ${\rm period}(P_n)=2$ for $n=1,2,\ldots$. 
Thus, $h(X') \geq h(X)-1 \geq h(Y)+1 \geq h(Y')$ 
  holds by any transition, in the case. 

Consider the case $h(X)=h(Y)$. We consider four cases: 
(i) $X=Y=r$,  
(ii) both $X$ and $Y$ are inner nodes, 
(iii) both $X$ and $Y$ are leaves, i.e., both trees of $(f,G,P)$ and$(g,G,P)$ take the same height at time $t$, 
(iv) $X$ is not a leaf but $Y$ is a leaf, i.e., the tree of $(f,G,P)$ is higher than that of $(g,G,P)$ at time $t$. 

%%%
 In case (i), 
\begin{align*}
\Pr[h(X') = h(X)+1] &= \Pr[h(Y') = h(Y)+1] = 1
\end{align*}
 hold, and hence we can couple them to satisfy $h(X') = h(Y')$. % and hence $h(X') \geq h(Y')$. 
 In case (ii), since both $X$ and $Y$ are inner nodes, 
\begin{align*}
\Pr[h(X') = h(X)-1] &=\tfrac{1}{\deg_n(X)} \leq \tfrac{1}{\deg_{n'}(Y)}  = \Pr[h(Y') = h(Y)-1]  \\
\Pr[h(X') = h(X)+1] &=1-\tfrac{1}{\deg_n(X)}\geq 1-\tfrac{1}{\deg_{n'}(Y)} = \Pr[h(Y') = h(Y)+1] 
\end{align*}
 hold since $\deg_n(X) \geq \deg_{n'}(Y)$, 
   and hence we can couple them to satisfy $h(X') = h(Y')$. % and hence $h(X') \geq h(Y')$. 
%%%%
 In case (iii), both $X$ and $Y$ are leaves, 
\begin{align*}
\Pr[h(X') = h(X)-1] &= \Pr[h(Y') = h(Y)-1] = 1
\end{align*}
 holds, and hence we can couple them to satisfy $h(X') = h(Y')$. % and hence $h(X') \geq h(Y')$. 
 In case (iv), 
  since $X$ is not a leaf but $Y$ is a leaf, 
\begin{align*}
\Pr[h(X') = h(X)-1] &= \tfrac{1}{\deg_n(X)} \leq \Pr[h(Y') = h(Y)-1] =1  
\end{align*}
 holds, and hence we can couple them to satisfy $h(X') \geq h(Y')$. 
 Thus, we got a coupling to satisfy $h(X_t) \geq h(Y_t)$ for any $t=1,2,\ldots$. 
 It implies that if $h(X_t) = 0$ then $h(Y_t)=0$, and hence that $\Pr[X_t =r] \leq \Pr[Y_t=r]$. 
 We obtain the claim. 
\end{proof}

\begin{proposition}\label{prop:level-busy}
If $\mathfrak{d}$ satisfies 
\begin{align*}
 \sum_{n=1}^{\infty}\frac{\mathfrak{d}(n) -1 }{|E_n|}=\infty 
\end{align*}
then $r$ is recurrent by $\mathcal{D}_{\mathrm{T}'}$. 
\end{proposition}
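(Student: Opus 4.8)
The plan is to derive Proposition~\ref{prop:level-busy} directly from the recurrence criterion Lemma~\ref{lem:rec}, using the fact established just above that $\mathcal{D}_{\mathrm{T}'}$ is LHaGG when $\gamma=0$. Thus the only thing left to verify is that the hypotheses of Lemma~\ref{lem:rec} hold for the ``busy'' walk and that the duration condition \eqref{eq:rec} is implied by $\sum_{n}\frac{\mathfrak{d}(n)-1}{|E_n|}=\infty$.

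First I would record the structural facts about $P_n$ for $\gamma=0$: it is the simple random walk on the finite tree $\tree_n$, which is connected and bipartite with the sides being the vertices of even and of odd height; in particular $P_n$ is irreducible and reversible with ${\rm period}(P_n)=2$, exactly as in Lemma~\ref{lem:pn-karytree}. Reversibility holds with weight $\phi(v)=\deg_n(v)$, since $\deg_n(u)P_n(u,v)=1=\deg_n(v)P_n(v,u)$ for every edge $\{u,v\}$. Because every edge of a bipartite graph has exactly one endpoint on each side, $\sum_{u\in U_n}\deg_n(u)=|E_n|$, so the analogue of \eqref{eq:stationary-karytree} gives the even-time stationary distribution $\mathring{\pi}_n(v)=\deg_n(v)/|E_n|$ for $v\in U_n$.

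Next I would evaluate $p(n)=\mathring{\pi}_n(r)$. The root has height $0$, hence lies in $U_n$, and its degree equals the (fixed) number of its children, which does not change as further levels are appended; in any case $\deg_n(r)\ge 1$, so $p(n)=\deg_n(r)/|E_n|\ge 1/|E_n|$. Hence
\begin{align*}
\sum_{n=1}^{\infty}(\mathfrak{d}(n)-1)\,p(n)\ \ge\ \sum_{n\,:\,\mathfrak{d}(n)\ge 1}\frac{\mathfrak{d}(n)-1}{|E_n|},
\end{align*}
and since in $\sum_{n}\frac{\mathfrak{d}(n)-1}{|E_n|}$ the only negative contributions are the non-positive terms coming from phases with $\mathfrak{d}(n)=0$, the assumption $\sum_{n}\frac{\mathfrak{d}(n)-1}{|E_n|}=\infty$ forces the restricted sum on the right to diverge as well. (Alternatively one may assume without loss of generality that every phase has positive duration, collapsing zero-duration phases, and the bound is then immediate.) Therefore \eqref{eq:rec} holds, and Lemma~\ref{lem:rec} applied to the LHaGG chain $\mathcal{D}_{\mathrm{T}'}$ shows that $r$ is recurrent.

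Everything here is routine; the only steps needing a little attention are the identity $\sum_{u\in U_n}\deg_n(u)=|E_n|$ (so that $p(n)$ carries $|E_n|$, not $2|E_n|$, in the denominator) and the minor bookkeeping for zero-duration phases. There is no genuine obstacle once the two lemmas already in hand are invoked.
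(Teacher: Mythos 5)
Your derivation is correct and is surely the intended one: the paper states Proposition~\ref{prop:level-busy} without proof, and it is meant to follow from Lemma~\ref{lem:rec} combined with Lemma~\ref{lem:lhagg-leveltree-busy}, using exactly your computation $\mathring{\pi}_n(r)=\deg_n(r)/|E_n|\geq 1/|E_n|$ (with $\sum_{u\in U_n}\deg_n(u)=|E_n|$ by bipartiteness). Two small points to tighten. First, your aside that $\deg_n(r)$ is ``fixed'' is not guaranteed by the level-tree definition --- the growth may add children to the root, as in the growing-star example of the same section --- but you only ever use $\deg_n(r)\geq 1$, so nothing breaks. Second, your displayed inequality silently discards the terms $-\mathring{\pi}_n(r)$ coming from phases with $\mathfrak{d}(n)=0$; since $\mathring{\pi}_n(r)$ need not be summable over such phases (again the star gives $\mathring{\pi}_n(r)=1$ for all $n$), the full signed series $\sum_n(\mathfrak{d}(n)-1)p(n)$ can fail to tend to $+\infty$ even when your restricted sum diverges, so the literal claim ``therefore \eqref{eq:rec} holds'' is not always true. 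Your parenthetical fix --- collapse the zero-duration phases, after which all terms are nonnegative and \eqref{eq:rec} follows, with LHaGG for the collapsed sequence given by the same height coupling --- should be promoted from an ``alternative'' to the actual argument (equivalently, one can note that the proof of Lemma~\ref{lem:rec} really only needs $\sum_n f(n)p(n)=\infty$ with $f(n)=2\lfloor\mathfrak{d}(n)/2\rfloor\geq 0$, which your restricted sum already gives).
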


\subsection{Lazy walk}
Next, we are concerned with the case $\gamma \neq 0$. 
\begin{lemma}
 If $\frac{1}{2} \leq \gamma < 1$ 
  then $\mathcal{D}_{\mathrm{T}'}$ is LHaGG. 
\end{lemma}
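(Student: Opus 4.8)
The plan is to build a coupling of the two processes in exactly the style of the busy case, Lemma~\ref{lem:lhagg-leveltree-busy}, and to isolate the one step where $\gamma\ge\tfrac12$ is genuinely needed. Fix $f,g$ with $\sum_{i=1}^n f(i)\le\sum_{i=1}^n g(i)$ for every $n$, let $X_t$ follow $(f,G,P)$ and $Y_t$ follow $(g,G,P)$ with $X_0=Y_0=r$, and let $n_t\ge n'_t$ be their respective phases at time $t$ (the inequality holding because the $f$-tree has grown at least as fast). As in the busy case, it suffices to construct a coupling with $h(X_t)\ge h(Y_t)$ for all $t$: then $h(X_t)=0$ forces $h(Y_t)=0$, i.e.\ $\Pr[X_t=r]\le\Pr[Y_t=r]$, which is LHaGG. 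I would proceed by induction on $t$, splitting on the gap $\Delta_t=h(X_t)-h(Y_t)\ge 0$.

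If $\Delta_t\ge 2$, any coupling works, because one step moves each height by at most $1$, so $h(X_{t+1})\ge h(X_t)-1\ge h(Y_t)+1\ge h(Y_{t+1})$. If $\Delta_t=0$, I would reuse verbatim the four-case split of Lemma~\ref{lem:lhagg-leveltree-busy} ($X_t=Y_t=r$; both internal; both leaves; $X_t$ internal with $Y_t$ a leaf). In every case the two vertices share the self-loop probability $\gamma$ and satisfy $\deg_{n_t}(X_t)\ge\deg_{n'_t}(Y_t)$, so one couples the two ``hold'' events together and couples $\{X_t\text{ steps toward the root}\}$ inside $\{Y_t\text{ steps toward the root}\}$; this is legitimate since the probability of stepping toward the root at a vertex of degree $d$ is $\tfrac{1-\gamma}{d}$ (read as $1-\gamma$ at a leaf), a non-increasing function of $d$. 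The leftover mass is matched arbitrarily, and each resulting outcome keeps $h(X_{t+1})\ge h(Y_{t+1})$; note $\gamma\ge\tfrac12$ is not used here.

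The genuinely new situation, forced by the loss of the parity property that the busy walk enjoyed, is $\Delta_t=1$: a single step could in principle leave $h(X_{t+1})=h(Y_t)$ while $h(Y_{t+1})=h(Y_t)+1$, and this is the only way the invariant can break. To forbid this I would couple so that whenever $X_t$ steps toward the root, $Y_t$ does \emph{not} step away from it. Such a coupling exists as soon as $\Pr[X_t\text{ steps toward root}]\le\Pr[Y_t\text{ does not step away from root}]$; the left side is at most $1-\gamma$ (namely $\tfrac{1-\gamma}{\deg_{n_t}(X_t)}$ if $X_t$ is internal and $1-\gamma$ if $X_t$ is a leaf, and $X_t\ne r$ because $\Delta_t=1$ forces $h(X_t)\ge 1$), while the right side is at least $\Pr[Y_t\text{ holds}]=\gamma$; the assumption $\gamma\ge\tfrac12$ makes $1-\gamma\le\gamma$, so the inequality holds --- uniformly over whether $X_t$ and $Y_t$ are the root, internal, or leaves, and without any degree comparison. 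With that bad outcome excluded, the remaining cases ($X_t$ holds or steps away) trivially satisfy $h(X_{t+1})\ge h(Y_{t+1})$, closing the induction. The main --- essentially the only --- obstacle is this $\Delta_t=1$ case; everything else is a transcription of the busy-case proof, and the point of $\gamma\ge\tfrac12$ is exactly to make ``$X$ steps inward'' fit inside ``$Y$ holds''.
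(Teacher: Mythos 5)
Your proposal is correct and follows essentially the same route as the paper: a height coupling proved by induction on $t$, with the gap-$0$ and gap-$\ge 2$ cases handled as in the busy walk and the hypothesis $\gamma\ge\tfrac12$ invoked exactly in the gap-$1$ case. The only difference is cosmetic: where the paper enumerates six subcases for $h(X)=h(Y)+1$ and verifies $\Pr[h(X')=h(X)-1]\le\gamma$ and $\Pr[h(Y')=h(Y)+1]\le\gamma$ in each, you collapse them into the single uniform bound $\Pr[X\text{ steps inward}]+\Pr[Y\text{ steps outward}]\le 2(1-\gamma)\le 1$, which excludes the unique bad joint outcome.
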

\begin{proof}
 Let $f$ and $g$ satisfy $\sum_{i=1}^n f(i) \leq \sum_{i=1}^n g(i)$ for any $n=1,2,\ldots$, and 
  let $X_t$ and $Y_t$ ($t=0,1,2,\ldots$) respectively follow $(f,G,P)$ and $(g,G,P)$, 
  i.e., the tree of $(f,G,P)$ grows faster than $(g,G,P)$. 
 Let $X_0 =Y_0 =r$, and 
  we prove $\Pr[X_t = r] \leq \Pr[Y_t = r]$ for any $t=1,2,\ldots$. 
% The proof is based on a (type of) coupling. 

%We prove the claim by coupling. 
%Suppose $X_t$ and $Y_t$ respectively follows ${\cal D}$ and  ${\cal D'}$, where $X_0=Y_0=r$. 
%Let $h(v)$ denote the height of $v \in V$, i.e., the length from $v$ to $r$. 
%Note that $h(r)=0$. 
 We prove $h(X_t) \geq h(Y_t)$ for any $t=1,2,\ldots$. 
 Consider the pair of transitions $X$ to $X'$ and $Y$ to $Y'$, for convenience of the notation.  
%If $h(X') < h(X)$ then $h(Y') < h(Y)$
 If $h(X) \geq h(Y)+2$ then $h(X') \geq h(Y')$ is trivial, 
  similar to Lemma~\ref{lem:lhagg-leveltree-busy}
 Similarly, 
  the case $h(X) =h(Y)$ is similar to Lemma~\ref{lem:lhagg-leveltree-busy}. 

Then, 
  we consider the case $h(X)=h(Y)+1$. We consider six cases: 
(i) $Y=r$ and $X$ is a leaf, i.e., $n=1$,  
(ii) $Y=r$ and $X$ is not a leaf, i.e., $n \geq 2$, 
(iii) both $Y$ and $X$ are inner nodes, 
(iv) $Y$ is a leaf but $X$ is not a leaf, 
   i.e., the tree of $(f,G,P)$ is higher more than one than that of $(g,G,P)$ at time $t$. 
(v) both $Y$ and $X$ are leaves, 
   i.e., both trees of the height of the tree $(f,G,P)$ is greater exactly one than that of $(g,G,P)$ at time $t$, 
(vi) $Y$ is not a leaf but $X$ is a leaf, 
   i.e., the tree of $(f,G,P)$ is the same height as that of $(g,G,P)$ at time $t$. 

%%%
 In case (i), 
\begin{align*}
\Pr[h(X') = h(X)-1] &= 1-\gamma
\leq \Pr[h(Y') = h(Y)] = \gamma
\end{align*}
 hold, where the inequality follows $\gamma \geq \frac{1}{2}$. 
 Thus, we can couple them to satisfy $h(X') \geq h(Y')$. % and hence $h(X') \geq h(Y')$. 
 In case (ii), 
\begin{align*}
\Pr[h(X') = h(X)-1] &= (1-\gamma)\tfrac{1}{\deg_n(X)} \leq \Pr[h(Y') = h(Y)] = \gamma,\hspace{2em} \mbox{and}\\
\Pr[h(Y') = h(Y)+1] &= (1-\gamma)\left(1-\tfrac{1}{\deg_{n'}(Y)}\right) \leq  \Pr[h(X') = h(X)] = \gamma
\end{align*}
 hold since $\gamma \geq \frac{1}{2}$. 
 Thus, we can couple them to satisfy $h(X') \geq h(Y')$. % and hence $h(X') \geq h(Y')$. 
 In case (iii), similarly 
\begin{align*}
\Pr[h(X') = h(X)-1] &= (1-\gamma)\tfrac{1}{\deg_n(X)} \leq \Pr[h(Y') = h(Y)] = \gamma,\hspace{2em} \mbox{and}\\
\Pr[h(Y') = h(Y)+1] &= (1-\gamma)\left(1-\tfrac{1}{\deg_{n'}(Y)}\right) \leq  \Pr[h(X') = h(X)] = \gamma
\end{align*}
 hold, and hence we can couple them to satisfy $h(X') \geq h(Y')$. % and hence $h(X') \geq h(Y')$. 
%%%%
 In case (iv), since $Y$ is a leaf, 
\begin{align*}
\Pr[h(X') = h(X)-1] = (1-\gamma) \left(1-\tfrac{1}{\deg_n(X)}\right)  \leq \Pr[h(Y') = h(Y)] = \gamma
\end{align*}
 and hence we can couple them to satisfy $h(X') \geq h(Y')$. % and hence $h(X') \geq h(Y')$. 
 In case (v), 
  since both $X$ and $Y$ are leaves, 
\begin{align*}
 \Pr[h(X') = h(X)-1] = \Pr[h(Y') = h(Y)-1] =  1-\gamma
\end{align*}
 hold and hence we can couple them to satisfy $h(X') \geq h(Y')$. % and hence $h(X') \geq h(Y')$. 
 In case (vi), 
  since $X$ is a leaf and $Y$ is not a leaf
\begin{align*}
\Pr[h(X') = h(X)-1] &= (1-\gamma)\tfrac{1}{\deg_n(X)} \leq \Pr[h(Y') = h(Y)] = \gamma,\hspace{2em} \mbox{and}\\
\Pr[h(Y') = h(Y)+1] &= (1-\gamma)\left(1-\tfrac{1}{\deg_{n'}(Y)}\right) \leq  \Pr[h(X') = h(X)] = \gamma
\end{align*}
 hold, and hence we can couple them to satisfy $h(X') \geq h(Y')$. 
 Thus, we got a coupling to satisfy $h(X_t) \geq h(Y_t)$ for any $t=1,2,\ldots$. 
 It implies that if $h(X_t) = 0$ then $h(Y_t)=0$, and hence that $\Pr[X_t =r] \leq \Pr[Y_t=r]$. 
 We obtain the claim. 
\end{proof}

\begin{proposition}\label{prop:level-lazy}
If $\mathfrak{d}$ satisfies 
\begin{align*}
 \sum_{n=1}^{\infty}\frac{\mathfrak{d}(n) }{|E_n|}=\infty 
\end{align*}
then $r$ is recurrent by $\mathcal{D}_{\mathrm{T}'}$. 
\end{proposition}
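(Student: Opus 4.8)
The plan is to obtain Proposition~\ref{prop:level-lazy} as a direct application of Lemma~\ref{lem:rec-ergodic}, the aperiodic version of the recurrence criterion, combined with the LHaGG property of $\mathcal{D}_{\mathrm{T}'}$ established in the preceding lemma. Since that LHaGG lemma requires $\tfrac12\le\gamma<1$, I work under this hypothesis (the lazy regime), so that the self-loop probability $\gamma>0$ is available throughout.

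First I would check that every $P_n$ is ergodic and compute its stationary distribution. Connectedness of $G_n$ gives irreducibility; $P_n(v,v)=\gamma>0$ gives aperiodicity; and $P_n$ is reversible because it is the $\gamma$-lazy random walk on $G_n$: taking $\phi_n(v)=\deg_n(v)$, the detailed balance equation $\phi_n(u)P_n(u,v)=\phi_n(v)P_n(v,u)$ reduces, for each edge $\{u,v\}\in E_n$, to the identity $(1-\gamma)=(1-\gamma)$, while the loop terms match trivially. Hence $P_n$ is ergodic with
\[
\pi_n(v)=\frac{\deg_n(v)}{\sum_{w\in V_n}\deg_n(w)}=\frac{\deg_n(v)}{2|E_n|},
\]
and in particular $p(n)=\pi_n(r)=\deg_n(r)/(2|E_n|)$.

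Next, since $G_n$ has height $n\ge 1$, the root has at least one child, so $\deg_n(r)\ge 1$ and therefore $p(n)\ge 1/(2|E_n|)$. Under the hypothesis of the proposition this yields
\[
\sum_{n=1}^{\infty}\mathfrak{d}(n)\,p(n)\ \ge\ \frac12\sum_{n=1}^{\infty}\frac{\mathfrak{d}(n)}{|E_n|}=\infty .
\]
By the preceding lemma $\mathcal{D}_{\mathrm{T}'}$ is LHaGG, so all hypotheses of Lemma~\ref{lem:rec-ergodic} are met and we conclude that $r$ is recurrent by $\mathcal{D}_{\mathrm{T}'}$.

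There is essentially no deep obstacle here: the coupling and the reduction of recurrence to the series $\sum\mathfrak{d}(n)p(n)$ have already been carried out in the LHaGG lemma and in Lemma~\ref{lem:rec-ergodic}. The only points needing care are recording the standing assumption $\tfrac12\le\gamma<1$ that the LHaGG lemma needs, and the elementary bound $\deg_n(r)\ge 1$ used to pass from $\sum\mathfrak{d}(n)/|E_n|=\infty$ to the form $\sum\mathfrak{d}(n)p(n)=\infty$ required by Lemma~\ref{lem:rec-ergodic}. (The analogous computation with $\sum(\mathfrak{d}(n)-1)p(n)$ and Lemma~\ref{lem:rec} in place of Lemma~\ref{lem:rec-ergodic} proves the busy-walk Proposition~\ref{prop:level-busy} in the same way.)
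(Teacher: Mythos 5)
Your proposal is correct and is essentially the argument the paper intends (the paper leaves the proof of this proposition implicit, stating it immediately after the LHaGG lemma for the lazy walk): verify that each $P_n$ is ergodic and reversible with $\pi_n(r)=\deg_n(r)/(2|E_n|)\ge 1/(2|E_n|)$, then invoke Lemma~\ref{lem:rec-ergodic} together with the LHaGG property. Your explicit remark that the standing hypothesis $\tfrac12\le\gamma<1$ from the LHaGG lemma is needed is a worthwhile point of care, since the proposition as stated does not record it.
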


  A similar result for $\gamma$ satisfying $0 < \gamma \leq 1/2$ 
   is given by  Huang~\cite{Huang19} 
   under some appropriate assumptions, e.g., uniformly bounded degree. 
 
\subsection{Growing Star for Unbounded Degree}
%\subsubsection{Remark on RW on growing level trees}
We remark that the degree of the growing level tree can be unbounded asymptotic to $t \to \infty$, 
so does the simple random walk on $\{0,1\}^n$ in Section~\ref{sec:cube}.

As an extreme case of growing level tree, 
  we consider growing star, formally given by  
$G_n = (V_n,E_n)$, $V_n = \{r,v_1,\ldots,v_{M(n)}\}$ and $E_n = \{\{r,v_i\} \mid i=1,\ldots,M(n)\}$, 
where $M \colon \mathbb{Z}_{>0} \to  \mathbb{Z}_{>0}$ be a monotone function. 
It is not difficult to see that $r$ is recurrent for any $\mathfrak{d}$. 
On the other hand, 
  $v_1$ ($v_i$ as well for finite $i$) is recurrent depending on  $\mathfrak{d}$, 
 where we remark that $\mathfrak{d}(n) = 0$ for some $n$ is allowed, 
  unlike Section~\ref{sec:cube}. 
Let $P_n$ denote the busy simple walk, i.e., $P(u,v) = 1/\deg_n(u)$ for $\{u,v\} \in E$, 
 while let $P'_n$ denote the lazy simple walk, 
  i.e., $P(u,v) = (1-\gamma)/\deg_n(u)$ for $\{u,v\} \in E$ and $P(u,u) = \gamma$ for all $u \in V_n$, 
Let ${\mathcal D}_{\rm S} = (\mathfrak{d},P,G)$ denote the busy RWoGG, and 
let  ${\mathcal D}'_{\rm S} = (\mathfrak{d},P',G)$ denote the lazy RWoGG, 
where $\mathfrak{d}(n)=1$ for any $n$. 

 It is trivial that $r$ is always recurrent either by ${\mathcal D}_{\rm S}$ or by ${\mathcal D}'_{\rm S}$. 
 Clearly, the degree of $G_n$ is unbounded asymptotic to $n$. 
 Interestingly, the recurrence/transience of $v_i$ depends on the growing speed.  

\begin{proposition}[Busy walk]\label{prop:star-busy}
 If $\sum_{n=1}^{\infty} \frac{1}{M(n)} = \infty$
   then $v_i$ for $1 \leq i < \infty$ is recurrent by ${\mathcal D}_{\rm S}$,  
  otherwise transient.  
\end{proposition}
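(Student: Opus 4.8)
The plan is to compute the return probability $R(t)=\Pr[X_t=v_i]$ explicitly, exploiting the rigid structure of the busy walk on a star; no appeal to LHaGG or to Lemmas~\ref{lem:rec}--\ref{lem:trans} is needed here because $\mathfrak{d}\equiv 1$ is fixed. Since $\mathfrak{d}(n)=1$, the transition from time $t$ to $t+1$ is governed by $P(t+1)$ on $G_{t+1}$. In $G_n$ every leaf has degree $1$ with unique neighbour $r$, while $\deg_n(r)=M(n)$; hence, starting from $X_0=v_i$, the walk deterministically alternates between the centre $r$ at odd times and a leaf at even times (in particular $P_n$ has period $2$ and $R(t)=0$ for every odd $t$). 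For even $t=2s$, the event $\{X_{2s}=v_i\}$ is exactly the event that the transition out of $r$ at time $2s-1$ (which uses $P(2s)$) selects $v_i$; therefore $R(2s)=\frac{1}{M(2s)}$ if $i\le M(2s)$ and $R(2s)=0$ otherwise. We may assume $i\le M(1)$, so that $v_i\in V(1)$: since $M$ is monotone and recurrence depends only on the tail of $\sum_t R(t)$, this is no loss of generality (and if $\sup_n M(n)<i$ the statement is vacuous since $v_i$ never appears).

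Consequently $\sum_{t\ge1}R(t)=\sum_{s\ge1}\frac{1}{M(2s)}\,\mathbf{1}[i\le M(2s)]$, and it remains to compare this series with $\sum_{n\ge1}\frac{1}{M(n)}$. Monotonicity of $M$ gives $\frac{1}{M(2s-1)}\le\frac{1}{M(2s-2)}$ for $s\ge2$, hence $\sum_{s\ge1}\frac{1}{M(2s-1)}\le\frac{1}{M(1)}+\sum_{s\ge1}\frac{1}{M(2s)}$, and so
\[
\sum_{s\ge1}\frac{1}{M(2s)}\ \le\ \sum_{n\ge1}\frac{1}{M(n)}\ \le\ \frac{1}{M(1)}+2\sum_{s\ge1}\frac{1}{M(2s)}.
\]
Thus $\sum_{n}\frac{1}{M(n)}=\infty$ if and only if $\sum_{s}\frac{1}{M(2s)}=\infty$. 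It remains to absorb the indicator. If $\sum_n\frac1{M(n)}<\infty$ then $M(n)\to\infty$, so the indicator equals $1$ for all large $s$ and $\sum_tR(t)\le\sum_n\frac1{M(n)}<\infty$: the point $v_i$ is transient. If $\sum_n\frac1{M(n)}=\infty$, then either $M$ is unbounded, in which case the indicator is again eventually $1$ and $\sum_sR(2s)=\sum_s\frac1{M(2s)}=\infty$ by the display; or $M$ is bounded, and being integer-valued and non-decreasing it is eventually equal to a constant $L\ge i$, so $R(2s)=\frac1L$ for all large $s$ and $\sum_sR(2s)=\infty$. In either case $v_i$ is recurrent, which completes the proof.

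There is no genuinely hard step here. The only points that require a little care are the elementary bookkeeping that makes $\sum_n\frac1{M(n)}$ equivalent, as far as divergence is concerned, to its even-indexed subsum, and the boundary case in which $M$ stabilises at a finite value — which is harmless precisely because $M$ takes integer values. Everything else is immediate from the deterministic centre/leaf alternation of the busy walk.
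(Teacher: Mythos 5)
Your proof is correct. The paper states Proposition~\ref{prop:star-busy} without proof, so there is no argument of the authors' to compare against; your direct computation is the natural (and essentially forced) route. The key observation --- that the busy walk on a star deterministically alternates between the centre at odd times and a uniformly random leaf of the current graph at even times, so that $R(2s)=1/M(2s)$ exactly --- is right, and your bookkeeping reducing $\sum_n 1/M(n)$ to its even-indexed subsum via monotonicity of $M$ is sound, as is the treatment of the boundary case where $M$ stabilises at an integer value. It is worth noting that the paper's general machinery could not have replaced this computation for the recurrence direction: with $\mathfrak{d}(n)=1$ for all $n$, the sufficient condition of Lemma~\ref{lem:rec} reads $\sum_n(\mathfrak{d}(n)-1)p(n)=\sum_n 0=\infty$, which is vacuously false, so the LHaGG framework gives nothing here and an exact evaluation of $R(t)$ is genuinely needed (the transience direction could alternatively be extracted from Lemma~\ref{lem:trans}, since the even mixing-time of the star is $2$, but your self-contained argument is simpler). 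The only cosmetic point is that once you restrict to $i\le M(1)$, monotonicity of $M$ makes the indicator $\mathbf{1}[i\le M(2s)]$ identically $1$, so the later case analysis absorbing it is redundant --- harmless, but it could be trimmed.
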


\begin{proposition}[Lazy walk]\label{prop:star-lazy}
 If $\sum_{n=1}^{\infty} \frac{1}{M(n)} = \infty$
   then $v_i$ for $1 \leq i < \infty$ is recurrent by ${\mathcal D}'_{\rm S}$, 
  otherwise transient.  
\end{proposition}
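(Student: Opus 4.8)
The plan is to estimate the return probability $R(t)=\Pr[X_t=v_i]$ directly, without invoking LHaGG: here the duration is fixed ($\mathfrak{d}\equiv 1$), so there is no ``slower'' RWoGG to compare against and the coupling machinery is not needed; the star is simple enough to handle by hand. We may assume $M(1)\ge i$, so that $v_i$ is a leaf of every $G_n$ (otherwise shift the time origin to the first phase in which $v_i$ appears, which alters $\sum_t R(t)$ only by a finite amount and does not affect its (in)finiteness). Since $\mathfrak{d}\equiv 1$ we have $T_n=n$, and the transition from $X_t$ to $X_{t+1}$ is the lazy simple walk $P_{t+1}$ on the star with $M(t+1)$ leaves.

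First I would lump the state space into the three classes $\{v_i\}$, $\{r\}$, and ``every other leaf'' (the last of which is stable under adding new leaves); by symmetry the walk is determined on these classes, and only $a_t:=\Pr[X_t=v_i]=R(t)$ and $b_t:=\Pr[X_t=r]$ are needed. A single step (with $M=M(t+1)$) yields
\[
a_{t+1}=\gamma\,a_t+\frac{1-\gamma}{M(t+1)}\,b_t,
\qquad
b_{t+1}=(1-\gamma)(1-b_t)+\gamma\,b_t,
\]
with initial data $a_0=1$, $b_0=0$. The decisive observation is that the recursion for $b_t$ does not involve $M$ at all: it is the affine map $g(x)=(1-\gamma)+(2\gamma-1)x$, a contraction of $[0,1]$ with $g(0)=1-\gamma$ and $g(1)=\gamma$, so $g([0,1])=[\beta,1-\beta]$ where $\beta:=\min\{\gamma,1-\gamma\}>0$. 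Hence $\beta\le b_t\le 1-\beta$ for every $t\ge 1$, a uniform two-sided bound depending only on $\gamma$. This bound is the crux of the proof: it certifies that the ``source term'' $\tfrac{1-\gamma}{M(t+1)}b_t$ feeding $a_{t+1}$ is genuinely of order $1/M(t+1)$, rather than being damped to something spuriously summable.

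Next I would solve the scalar linear recursion for $a_t$,
\[
a_t=\gamma^{t}a_0+(1-\gamma)\sum_{s=1}^{t}\gamma^{\,t-s}\,\frac{b_{s-1}}{M(s)},
\]
sum over $t$, and interchange the two (nonnegative) summations; using $\sum_{t\ge 0}\gamma^{t}=\sum_{t\ge s}\gamma^{\,t-s}=\tfrac1{1-\gamma}$ this collapses to
\[
\sum_{t=1}^{\infty}R(t)=\frac{\gamma}{1-\gamma}\,a_0+\sum_{s=2}^{\infty}\frac{b_{s-1}}{M(s)}.
\]
Since $\beta\le b_{s-1}\le 1-\beta$ for $s\ge 2$, the right-hand series lies within constant factors of $\sum_{s\ge 2}1/M(s)$, and the latter converges or diverges together with $\sum_{n\ge 1}1/M(n)$ (the terms are positive and finite, $M(n)\ge 1$, and $M$ is monotone). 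Thus $\sum_t R(t)=\infty$ precisely when $\sum_n 1/M(n)=\infty$, which is the claimed dichotomy. (Running the same computation with $\gamma=0$, where $b_{t+1}=1-b_t$ forces $b_t=1$ on odd and $b_t=0$ on even times, gives $R(2k)=1/M(2k)$ and $R(2k+1)=0$, thereby also establishing Proposition~\ref{prop:star-busy}.)

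The only genuine obstacle is the uniform lower bound $b_t\ge\beta$; everything else is an elementary unrolling of a one-dimensional linear recurrence plus a Fubini-type interchange of two nonnegative sums. A minor bookkeeping point is matching the index of $M$ occurring in the recursion with the index in $\sum_n 1/M(n)$, which is harmless because $M$ is monotone, so the two series differ only by finitely many leading terms and, at worst, an even/odd reindexing.
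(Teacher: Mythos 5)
Your proposal is correct, and it is self-contained: the paper actually states Propositions~\ref{prop:star-busy} and \ref{prop:star-lazy} without proof, so there is no argument of the authors to compare against line by line. Your route also differs from what the paper's general machinery would suggest. The framework route would be: show the growing star is LHaGG at $v_i$ (the Section~\ref{sec:tree} couplings are only carried out for the root $r$, so a separate coupling at a leaf would be needed), compute $\pi_n(v_i)=\tfrac{1}{2M(n)}$, and then invoke Lemma~\ref{lem:rec-ergodic} for recurrence and Lemma~\ref{lem:trans-ergodic} for transience; but the transience lemma, used with the total-variation mixing time $\tau_n(p(n))=\Theta(\log M(n))$, yields the condition $\sum_n \log M(n)/M(n-1)<\infty$, which is strictly stronger than $\sum_n 1/M(n)<\infty$, so one would need the point-wise refinement (Proposition~\ref{prop:trans2}) or an ad hoc argument to recover the exact dichotomy. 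Your direct solution of the two-dimensional linear recursion for $(a_t,b_t)$ sidesteps all of this: the recursions you write are exact (from any leaf the walk returns to $r$ with probability $1-\gamma$, and reaches $v_i$ from $r$ with probability $(1-\gamma)/M(t+1)$), the uniform bound $\beta\le b_t\le 1-\beta$ for $t\ge 1$ with $\beta=\min\{\gamma,1-\gamma\}>0$ is immediate from the affine map, and Tonelli gives the closed form $\sum_{t\ge1}R(t)=\tfrac{\gamma}{1-\gamma}+\sum_{s\ge2}b_{s-1}/M(s)$, which settles both directions at once and even dispenses with the monotonicity of $M$ (monotonicity is only needed in your parenthetical $\gamma=0$ computation, to compare $\sum_k 1/M(2k)$ with $\sum_n 1/M(n)$, which is exactly the busy case of Proposition~\ref{prop:star-busy}). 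The time-origin shift for $v_i\notin V(1)$ and the index bookkeeping are handled adequately; I see no gap.
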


\section{Concluding Remark}
 In this paper, we have developed a coupling method 
  to prove the recurrence and transience of a RWoGG, 
  by introducing the notion of LHaGG. 
 Then, we showed the phase transition 
   between the recurrence and transience 
   of random walks on a growing $k$-ary tree (Theorem~\ref{thm:karytree}) and 
   on a growing hypercube (Theorem~\ref{thm:cube}). 
 We have also shown other examples of LHaGG, 
  such as growing integer grid (Theorems~\ref{thm:box} and \ref{thm:box2}), and 
  growing level tree (Section~\ref{sec:tree}). 
 
%%%%  
 On the other hand, our examples are relatively specific. 
 It is a future work to develop an extended technique 
  to prove phase transitions for more general growing trees and integer grids. 
% Extension to $\{0,\ldots,N\}^n$ with an increasing $n$ is a future work. 

\section*{Acknowledgement}
This work is partly supported by 
  JST SPRING Grant Number JPMJSP2136 and 
  JSPS KAKENHI Grant Numbers JP21H03396, JP20K20884, JP22H05105 and JP23H01077.

\end{document}